\documentclass[11pt]{amsart}

\usepackage[T1]{fontenc}
\usepackage[utf8]{inputenc}
\usepackage{amsmath,amssymb,amsthm,mathtools}
\usepackage{mathrsfs}
\usepackage{tikz}
\usepackage{enumitem}
\usepackage{needspace}
\usepackage{microtype}
\usepackage{relsize}
\usepackage[hidelinks]{hyperref}
\usepackage[backend=biber,style=alphabetic,maxbibnames=15,maxcitenames=6,dateabbrev=false,autolang=hyphen]{biblatex}

\renewcommand{\UrlFont}{\sffamily\smaller}
\renewbibmacro{in:}{\ifentrytype{article}{}{\printtext{\bibstring{in}\intitlepunct}}}
\DeclareFieldFormat{url}{{\UrlFont\url{#1}}}
\DeclareFieldFormat{urldate}{%
  (version \thefield{urlday}\addspace%
  \mkbibmonth{\thefield{urlmonth}}\addspace%
  \thefield{urlyear}\isdot)}
\DeclareFieldFormat{eprint:arxiv}{%
\ifhyperref
    {\href{http://arxiv.org/abs/#1}{%
    arXiv\addcolon\nolinkurl{#1}}\iffieldundef{eprintclass}{}{\printtext{ [\thefield{eprintclass}]}}}
    {arXiv\addcolon\nolinkurl{#1}\iffieldundef{eprintclass}{}{\printtext{ [\thefield{eprintclass}]}}}}
\DeclareFieldFormat{eprint:arXiv}{%
\ifhyperref
    {\href{http://arxiv.org/abs/#1}{%
    arXiv\addcolon\nolinkurl{#1}}\iffieldundef{eprintclass}{}{\printtext{ [\thefield{eprintclass}]}}}
    {arXiv\addcolon\nolinkurl{#1}\iffieldundef{eprintclass}{}{\printtext{ [\thefield{eprintclass}]}}}}
\defbibheading{mgtbibliography}{\begin{center}\scshape References\end{center}}
\AtBeginBibliography{\small\setlength{\emergencystretch}{3em}}

\title{Modal group theory: homomorphisms}
\author{Wojciech Aleksander Wo\l oszyn}
\address[Wojciech Aleksander Wo\l oszyn]
{Mathematical Institute, University of Oxford, Andrew Wiles Building, Radcliffe Observatory Quarter, Woodstock Road, Oxford, OX2 6GG, United Kingdom \&\ St Hilda's College, Cowley Place, Oxford, OX4 1DY, United Kingdom}
\email{wojciech@woloszyn.org}
\urladdr{https://woloszyn.org}

\newtheorem{theorem}{Theorem}[section]
\newtheorem{lemma}[theorem]{Lemma}
\newtheorem{proposition}[theorem]{Proposition}
\newtheorem{corollary}[theorem]{Corollary}
\theoremstyle{definition}
\newtheorem{definition}[theorem]{Definition}

\theoremstyle{remark}
\newtheorem{remark}[theorem]{Remark}

\newsavebox{\mgthompossiblebox}
\newsavebox{\mgthomnecessarybox}
\sbox{\mgthompossiblebox}{\tikz[scale=.6ex/1cm,baseline=-.6ex,rotate=45,line width=.1ex]{\draw (-1,-1) rectangle (1,1);}}
\sbox{\mgthomnecessarybox}{\tikz[scale=.6ex/1cm,baseline=-.6ex,line width=.1ex]{\draw (-1,-1) rectangle (1,1);}}
\DeclareMathOperator{\possible}{\text{\usebox{\mgthompossiblebox}}}
\DeclareMathOperator{\necessary}{\text{\usebox{\mgthomnecessarybox}}}
\newcommand{\satisfies}{\models}
\newcommand{\theoryf}[1]{{\rm #1}}

\newcommand{\GrpHom}{\mathrm{Grp}_{\to}}
\newcommand{\Lgrp}{\mathcal L_{\mathrm{Grp}}}
\newcommand{\LgrpModal}{\Lgrp^{\possible}}
\newcommand{\LgrpM}{\LgrpModal}
\newcommand{\Val}{\mathrm{Val}}
\newcommand{\Z}{\mathbb{Z}}
\newcommand{\N}{\mathbb{N}}
\newcommand{\gen}[1]{\langle #1\rangle}
\newcommand{\fpmodels}{\models_{\mathrm{fp,hom}}}
\newcommand{\hommodels}{\models_{\mathrm{hom}}}
\newcommand{\ctblhommodels}{\models_{\mathrm{ctbl,hom}}}
\newcommand{\Pow}[1]{\mathcal P(#1)}

\newcommand{\SFourTwo}{\ensuremath{\theoryf{S4.2}}}
\newcommand{\SFive}{\ensuremath{\theoryf{S5}}}
\newcommand{\Godel}{G\"odel}
\DeclareMathOperator{\ord}{ord}

\DeclareMathOperator{\ApplyPred}{Apply}
\DeclareMathOperator{\HomPred}{Hom}
\DeclareMathOperator{\GroupPred}{Group}
\DeclareMathOperator{\ValPred}{Val}
\DeclareMathOperator{\EltPred}{Elt}
\DeclareMathOperator{\FPGroupPred}{FPGroup}
\DeclareMathOperator{\TuplePred}{Tuple}

\begin{document}

\begin{abstract}
I investigate modal group theory for arbitrary homomorphisms. Possibility is interpreted by the existence of a group homomorphism out of the given group, so the semantics is governed by the possibility of collapse: elements may be identified, parameters may be killed, and new relations may hold in the target. I show that the modal language nevertheless expresses cyclic subgroup membership, subgroup generation by a fixed finite tuple, cyclicity, finite generation by a fixed number of elements, and torsion. I use these definability results to interpret arithmetic, and prove that, as sets of G\"odel numbers, the homomorphic modal theory of finitely presented groups is computably isomorphic to true arithmetic. I also analyze propositional modal validities: sentential validities are exactly \SFive, the trivial group has exact parameter-validities \SFive, and uniformly prime-indivisible groups have exact parameter-validities \SFourTwo.
\end{abstract}

\maketitle

\section{Introduction}

Modal group theory investigates categories of groups from a modal perspective: one studies what becomes possible or necessary when a group is allowed to move along a chosen class of group-theoretic maps. In this paper the maps are arbitrary homomorphisms. Thus, for a group $G$ and a tuple $\bar g$ from $G$, I write $G\hommodels\possible\varphi[\bar g]$ if there is a group homomorphism $h\colon G\to H$ such that $H\satisfies\varphi[h(\bar g)]$. Necessity is the corresponding universal quantifier over all homomorphisms out of $G$. I work in the inverse-free group language $\Lgrp=\{\cdot,e\}$, and I write $\LgrpModal$ for its modal expansion by the modal operators $\possible$ and $\necessary$.

The homomorphism semantics is governed by the possibility of collapse. A homomorphism may identify elements, kill parameters, and impose new relations in the target. This already produces simple modal phenomena. Since every group admits a homomorphism to every finite cyclic group, the statements saying that the target group has exactly $n$ elements behave like finite dials: exactly one value holds at a time, and from any group one can move to a group making any chosen value hold. With parameters, the assertion $a=e$ is a button: it can be made permanently true by a homomorphism killing $a$, and once true it remains true along all further homomorphisms. These examples are elementary, but they already explain why the homomorphism modal logic is shaped by collapse rather than by extension.

This is the basic contrast with the embedding semantics of the companion paper \emph{Modal group theory}~\cite{WoloszynEmbedding}. There, possibility means passing to overgroups, so a group can only see upwards. Here, possibility means mapping to arbitrary homomorphic targets and then continuing from there. The modal language therefore sees not only ways of enlarging a group-theoretic situation, but also ways of quotienting, collapsing, and reinterpreting it through homomorphisms.

Despite this collapse, a substantial part of the expressive power discovered in the embedding setting survives. The modal language for homomorphisms expresses cyclic subgroup membership, subgroup generation by a fixed finite tuple, cyclicity, finite generation by a fixed number of elements, and torsion. These definability results are strong enough to interpret arithmetic. In particular, the homomorphic modal theory of finitely presented groups is computably isomorphic to true arithmetic, while the homomorphic modal theory of countable groups under homomorphisms between countable groups is one--one reducible to true second-order arithmetic.

\Needspace{16\baselineskip}
\begin{samepage}
The main results are:
\begin{enumerate}
    \item Many group-theoretic properties inexpressible in the first-order language of groups become expressible in modal group theory for homomorphisms. For example, membership in cyclic subgroups and in subgroups generated by a fixed finite tuple, cyclicity, being generated by at most a fixed number of elements, and torsion are expressible.
    \item As sets of G\"odel numbers, the theory of true arithmetic is computably isomorphic to the homomorphic modal theory of finitely presented groups.
    \item The sentential propositional modal validities of groups under homomorphisms constitute precisely \SFive{}, while the formulaic parameter-validities of uniformly prime-indivisible groups constitute precisely \SFourTwo.
\end{enumerate}
\end{samepage}
I also analyze the trivial group, which validates \SFive{} even with parameters.

\medskip

Modal group theory is part of the larger \emph{modal model theory} program, which investigates modal principles and validities in categories of first-order structures. Hamkins and I introduced modal model theory, with modal graph theory as the first case study~\cite{HW24}. In my work on the modal theory of the category of sets, I develop the broader concrete-categorical perspective and extend this semantics beyond the original potentialist systems~\cite{WSet}.

\section*{Acknowledgments}
I thank Joel David Hamkins, Ehud Hrushovski, and Goh Jun Le for many helpful discussions related to modal group theory. I am grateful to Martin Bays for suggesting the amalgamated product $G*_{\gen{a}}A_p$ used in Lemma~\ref{lem:one-prime-step}, which is the key construction in the proof of Theorem~\ref{thm:hom-exact-s42}.

\section{Expressive power of modal group theory}\label{section:expressive}

We now isolate the basic group-theoretic notions that remain modally definable when accessibility is given by arbitrary homomorphisms. The formal group language is the inverse-free language $\Lgrp=\{\cdot,e\}$, and $\LgrpModal$ denotes its modal expansion. Whenever inverse notation appears below, it is only metanotation: thus $u=x^{-1}$ is shorthand for $xu=e\wedge ux=e$.

The embedding paper begins with the modal definition of equality of order. The particular conjugacy-based formula used there,
\[
\possible\exists s\,(s^{-1}xs=y),
\]
does not transfer to homomorphisms: every group maps to the trivial group, where the images of $x$ and $y$ are conjugate. I therefore begin with cyclic subgroup membership, which is the first embedding argument that survives unchanged.

\subsection{Cyclic subgroup membership and finite generation}

We begin with the standard HNN-extension lemma used to detect cyclic subgroup membership; for background on HNN extensions and Britton's lemma, see \cite{HNN49,LyndonSchupp}.

\begin{lemma}\label{lem:hnn-centralizer}
Let $A\leq G$, and form the HNN extension
\[
G^*=\gen{G,s\mid s\gamma=\gamma s\text{ for all }\gamma\in A}.
\]
Then
\[
C_{G^*}(s)\cap G=A.
\]
\end{lemma}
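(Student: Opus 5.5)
The inclusion $A\subseteq C_{G^*}(s)\cap G$ is immediate from the defining relations $s\gamma=\gamma s$ for $\gamma\in A$. The content is the reverse inclusion. So fix $g\in G$ with $sg=gs$ in $G^*$; I will show $g\in A$. The plan is to view $G^*$ as the HNN extension of $G$ with stable letter $s$ and associated subgroups $A$ and $A$, identified by the identity isomorphism $\mathrm{id}_A$. I will then apply Britton's lemma \cite{HNN49,LyndonSchupp}: if a word $g_0 s^{\epsilon_1} g_1\cdots s^{\epsilon_n} g_n$ with $n\ge 1$ and all $g_i\in G$ represents the identity in $G^*$, then it contains a pinch. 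A pinch is a subword $s^{-1} g_i s$ with $g_i\in A$, or a subword $s g_i s^{-1}$ with $g_i\in A$; here both conditions read $g_i\in A$, since the associated subgroups coincide.

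The key step is to rewrite the relation $sg=gs$ as the word
\[
w = s\, g\, s^{-1}\, g^{-1},
\]
which represents $e$ in $G^*$. This is a word in $G$ and $s^{\pm1}$ containing exactly two occurrences of the stable letter, so $n=2\ge 1$ and Britton's lemma applies. Hence $w$ contains a pinch.

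The only segment lying between two stable letters is $s\,g\,s^{-1}$, so this must be the pinch, which forces $g\in A$. Any cyclic rotation of $w$ also represents $e$. Applying the lemma to the rotation $g^{-1} s g s^{-1}$ gives the same conclusion, so there are no other cases to consider.

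I expect no real obstacle here. The one point needing care is to state Britton's lemma in its exact form for associated subgroups $A\to A$ via the identity. In particular, the letters $g_i$ must be allowed to be arbitrary elements of $G$, including trivial ones. Then the case $g=e$ is consistent: it gives $g\in A$ trivially, and the word $ss^{-1}$ contains the pinch with $g_1=e\in A$. Alternatively one could use the normal form theorem for HNN extensions directly: $sgs^{-1}$ is reduced whenever $g\notin A$, and hence cannot equal the element $g\in G$. Britton's lemma is the cleaner route.
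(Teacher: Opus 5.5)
Your proof is correct and is essentially the paper's argument: both apply Britton's lemma to the commutator word (you use $sgs^{-1}g^{-1}$, the paper uses $s^{-1}gsg^{-1}$) and note that the only possible pinch forces $g\in A$. The paper states it contrapositively (for $g\notin A$ the word is reduced, hence nontrivial); your remark about cyclic rotations is harmless but not needed.
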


\begin{proof}
Every element of $A$ commutes with $s$ by construction. Conversely, let $g\in G\setminus A$. Then the word $s^{-1}gsg^{-1}$ is reduced, so Britton's lemma implies that it is nontrivial in $G^*$. Thus $s^{-1}gs\neq g$, equivalently $sg\neq gs$. Hence no element of $G\setminus A$ lies in $C_{G^*}(s)$.
\end{proof}

\begin{theorem}\label{thm:hom-cyclic-membership}
The relation of belonging to the cyclic subgroup generated by another element is expressible in modal group theory for homomorphisms. More precisely, the formula
\[
y\in\gen{x}\;:=\;\necessary\forall t\,(tx=xt\to ty=yt)
\]
satisfies
\[
G\hommodels y\in\gen{x}
\quad\text{iff}\quad
 y\in\gen{x}\text{ in }G.
\]
\end{theorem}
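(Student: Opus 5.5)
The proof will handle the two directions separately. For the forward direction, suppose $y=x^n$ in $G$ for some $n\in\Z$. Let $h\colon G\to H$ be an arbitrary homomorphism. Then $h(y)=h(x)^n$ in $H$. Take any $t\in H$ commuting with $h(x)$. Then $t$ commutes with $h(x)^{-1}$, and hence with every power $h(x)^n=h(y)$. So $H\satisfies\forall t\,(th(x)=h(x)t\to th(y)=h(y)t)$ for every homomorphic target $H$. Note that the necessity modality in the homomorphism semantics quantifies over all homomorphisms out of $G$, the identity included. This direction therefore needs only that homomorphisms preserve the relation $y=x^n$, and that the universal formula holds in every target.

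For the converse we argue by contraposition. Assume $y\notin\gen{x}$ in $G$, and put $A=\gen{x}\leq G$. Form the HNN extension
\[
G^*=\gen{G,s\mid s\gamma=\gamma s\text{ for all }\gamma\in A}.
\]
By Lemma~\ref{lem:hnn-centralizer}, $C_{G^*}(s)\cap G=A$. We need the natural map $G\to G^*$ to be an injective homomorphism. This is the standard embedding of the base group into an HNN extension, and it is in any case a consequence of Britton's lemma. Identify $G$ with its image. Then $s$ commutes with $x\in A$, while $y\in G\setminus A$, so $sy\neq ys$. Hence $G^*$ fails $\forall t\,(tx=xt\to ty=yt)$ at the images of $x,y$, with witness $t=s$. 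Since the inclusion $G\to G^*$ is a homomorphism, $G\hommodels\neg\necessary\forall t\,(tx=xt\to ty=yt)$ at $(x,y)$. This is exactly the failure of the formula.

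The only subtle point is bookkeeping. The formula is evaluated at the images $h(x),h(y)$, and we need a homomorphism in which $h(y)$ stays outside the centralizer of something centralizing $h(x)$. Collapse cannot create such a situation, which is why the forward direction works. For the reverse direction an injective map suffices, and Lemma~\ref{lem:hnn-centralizer} supplies it. So the main (mild) obstacle is confirming that $G$ embeds in $G^*$, so that $y\notin A$ is preserved. I will cite the standard HNN embedding theorem~\cite{HNN49,LyndonSchupp} for this. The relations are trivially satisfied by the identity isomorphism $A\to A$, so the extension is a genuine HNN extension and the theorem applies.
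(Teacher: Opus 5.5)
Your proof is correct and follows essentially the same route as the paper's. The forward direction is the same observation that powers of $h(x)$ commute with anything commuting with $h(x)$, and the converse uses the same HNN extension with a stable letter centralizing $\gen{x}$, Lemma~\ref{lem:hnn-centralizer}, and the canonical embedding as the witnessing homomorphism. The only difference is that you argue contrapositively, exhibiting $t=s$ as a counterexample, whereas the paper instantiates the boxed formula at $t=s$ directly.
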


\begin{proof}
If $y\in\gen{x}$, say $y=x^k$ for some $k\in\Z$, then for every homomorphism $h\colon G\to H$ any element of $H$ commuting with $h(x)$ also commutes with $h(x)^k=h(y)$. Hence the displayed sentence holds in every homomorphic image of $G$.

Conversely, suppose
\[
G\hommodels \necessary\forall t\,(tx=xt\to ty=yt).
\]
Form the HNN extension
\[
G^*=\gen{G,s\mid s\gamma=\gamma s\text{ for all }\gamma\in\gen{x}}.
\]
The canonical embedding $\iota\colon G\hookrightarrow G^*$ is, in particular, a homomorphism, so the displayed sentence holds in $G^*$ with $\iota(x)$ and $\iota(y)$ in place of $x$ and $y$. Since $s\iota(x)=\iota(x)s$ in $G^*$, instantiating $t=s$ yields $s\iota(y)=\iota(y)s$. By Lemma~\ref{lem:hnn-centralizer}, the elements of $\iota(G)$ commuting with $s$ are exactly the elements of $\iota(\gen{x})$. Hence $\iota(y)\in\iota(\gen{x})$, and therefore $y\in\gen{x}$ in $G$.
\end{proof}

\begin{corollary}\label{cor:hom-cyclic}
Cyclicity is expressible in modal group theory for homomorphisms.
\end{corollary}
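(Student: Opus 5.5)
The plan is to take the sentence
\[
\exists x\,\forall y\,(y\in\gen{x}),
\]
where $y\in\gen{x}$ is the modal formula $\necessary\forall t\,(tx=xt\to ty=yt)$ of Theorem~\ref{thm:hom-cyclic-membership}, and to show that $G\hommodels\exists x\,\forall y\,(y\in\gen{x})$ if and only if $G$ is cyclic. The first-order quantifiers $\exists x$ and $\forall y$ range over the elements of $G$ itself. Only the inner modal subformula looks at homomorphic images, and it does so with the parameters $x,y$ fixed.

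For the truth condition, the main tool is that Theorem~\ref{thm:hom-cyclic-membership} holds for arbitrary parameters $x,y\in G$: for each such pair, $G\hommodels y\in\gen{x}$ exactly when $y$ lies in the cyclic subgroup generated by $x$ in $G$. Compositionality of the semantics then gives the following. The sentence holds in $G$ iff there is some $x\in G$ such that every $y\in G$ lies in $\gen{x}$, that is, iff $G=\gen{x}$ for some $x$. This is precisely cyclicity.

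The only point needing care is that the modal formula is evaluated at $G$ with the actual parameters, not at some image group. So Theorem~\ref{thm:hom-cyclic-membership} applies verbatim. No homomorphism-invariance of cyclicity is needed, and none would be true in the relevant direction anyway. Beyond this there is no real obstacle. The corollary is a direct relativization of the membership formula, and the bulk of the work was already done in the HNN argument behind Theorem~\ref{thm:hom-cyclic-membership}.
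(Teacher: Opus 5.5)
Your proposal is correct and is the paper's proof: the paper also expresses cyclicity by the sentence $\exists x\,\forall y\,(y\in\gen{x})$. It likewise relies on Theorem~\ref{thm:hom-cyclic-membership}, which guarantees that the inner modal formula, evaluated at $G$ with arbitrary parameters, defines cyclic subgroup membership.
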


\begin{proof}
A group is cyclic exactly when
\[
\exists x\,\forall y\,(y\in\gen{x}),
\]
and the relation on the right is expressed by Theorem~\ref{thm:hom-cyclic-membership}.
\end{proof}

\begin{theorem}\label{thm:hom-tuple-generation}
For each $n\geq 1$, the relation that an element lies in the subgroup generated by $x_1,\dots,x_n$ is expressible in modal group theory for homomorphisms. More precisely,
\[
y\in\gen{x_1,\dots,x_n}
\;:=\;
\necessary\forall t\!\left(\bigwedge_{i=1}^n tx_i=x_it\to ty=yt\right)
\]
satisfies
\[
G\hommodels y\in\gen{x_1,\dots,x_n}
\quad\text{iff}\quad
 y\in\gen{x_1,\dots,x_n}\text{ in }G.
\]
\end{theorem}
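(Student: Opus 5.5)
The argument follows the proof of Theorem~\ref{thm:hom-cyclic-membership}, with the cyclic subgroup $\gen{x}$ replaced by $A=\gen{x_1,\dots,x_n}$. There are two directions, and Lemma~\ref{lem:hnn-centralizer} is used as stated, for an arbitrary subgroup.

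For the forward direction, suppose $y\in A$ in $G$. Write $y$ as a finite word in the $x_i^{\pm1}$. Let $h\colon G\to H$ be any homomorphism. Then $h(y)$ is the same word in the elements $h(x_i)^{\pm1}$. If $t\in H$ commutes with every $h(x_i)$, it also commutes with their inverses. Since the centralizer of $t$ is a subgroup, $t$ commutes with $h(y)$. This holds for every target, so the necessity formula holds at $G$. The identity homomorphism is included, so no separate argument is needed for $G$ itself.

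For the converse, assume the modal formula holds in $G$. Form the HNN extension
\[
G^*=\gen{G,s\mid s\gamma=\gamma s\text{ for all }\gamma\in A}.
\]
The canonical embedding $\iota\colon G\hookrightarrow G^*$ is a homomorphism, so the non-modal clause holds in $G^*$ at the parameters $\iota(x_1),\dots,\iota(x_n),\iota(y)$. By construction, $s$ commutes with each $\iota(x_i)$. Instantiating $t=s$ therefore gives $s\iota(y)=\iota(y)s$. By Lemma~\ref{lem:hnn-centralizer}, $C_{G^*}(s)\cap\iota(G)=\iota(A)$, so $\iota(y)\in\iota(A)$. Since $\iota$ is injective, $y\in A$ in $G$.

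No step here is a real obstacle. The only points to check are that Lemma~\ref{lem:hnn-centralizer} applies to the non-cyclic subgroup $A$, which it does since it holds for any $A\leq G$, and that the embedding $G\hookrightarrow G^*$ is injective. Injectivity is the standard HNN embedding theorem, already implicit in Britton's lemma, which is the tool used in the proof of Lemma~\ref{lem:hnn-centralizer}. The case $n=1$ recovers Theorem~\ref{thm:hom-cyclic-membership}.
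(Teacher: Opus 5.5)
Your proof is correct and follows the paper's own argument. In the forward direction, any element centralizing the images of the $x_i$ also centralizes the image of every word in the $x_i^{\pm1}$. For the converse, you apply necessity along the canonical embedding into the HNN extension centralizing $\gen{x_1,\dots,x_n}$, instantiate $t=s$, and invoke Lemma~\ref{lem:hnn-centralizer}, exactly as the paper does.
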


\begin{proof}
If $y\in\gen{x_1,\dots,x_n}$, then $y$ is a word in the letters $x_i^{\pm1}$. Therefore for every homomorphism $h\colon G\to H$, any element of $H$ commuting with all $h(x_i)$ also commutes with $h(y)$, so the displayed sentence holds in every homomorphic image of $G$.

Conversely, suppose the displayed sentence holds homomorphically in $G$, and form the HNN extension
\[
G^*=\gen{G,s\mid s\gamma=\gamma s\text{ for all }\gamma\in\gen{x_1,\dots,x_n}}.
\]
The canonical embedding $\iota\colon G\hookrightarrow G^*$ is a homomorphism, so the displayed sentence holds in $G^*$ with $\iota(x_1),\dots,\iota(x_n),\iota(y)$. Since $s$ commutes with every $\iota(x_i)$, instantiating $t=s$ yields $s\iota(y)=\iota(y)s$. Lemma~\ref{lem:hnn-centralizer} now implies that $\iota(y)\in \iota(\gen{x_1,\dots,x_n})$, and hence $y\in\gen{x_1,\dots,x_n}$ in $G$.
\end{proof}

\begin{corollary}\label{cor:hom-n-generated}
For every $n\geq 1$, the property of being generated by at most $n$ elements is expressible in modal group theory for homomorphisms.
\end{corollary}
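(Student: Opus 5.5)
The plan is to mimic the proof of Corollary~\ref{cor:hom-cyclic}, replacing the single generator by an $n$-tuple and using Theorem~\ref{thm:hom-tuple-generation} in place of Theorem~\ref{thm:hom-cyclic-membership}. Concretely, I will take the $\LgrpModal$-sentence
\[
\exists x_1\cdots\exists x_n\,\forall y\,\bigl(y\in\gen{x_1,\dots,x_n}\bigr),
\]
where $y\in\gen{x_1,\dots,x_n}$ abbreviates the modal formula
\[
\necessary\forall t\!\left(\bigwedge_{i=1}^n tx_i=x_it\to ty=yt\right)
\]
from Theorem~\ref{thm:hom-tuple-generation}.

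The verification is then short. The outer quantifiers $\exists x_1\cdots\exists x_n\,\forall y$ are evaluated in $G$ itself, since quantifiers in the modal semantics range over the current world. For any choice of $\bar x$ and $y$ in $G$, Theorem~\ref{thm:hom-tuple-generation} tells us that $G\hommodels y\in\gen{x_1,\dots,x_n}$ holds exactly when $y$ lies in the subgroup of $G$ generated by $x_1,\dots,x_n$. So the sentence holds in $G$ if and only if some $n$-tuple generates all of $G$.

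It remains to match this with the phrase ``at most $n$ generators.'' A group generated by $k\le n$ elements is also generated by an $n$-tuple: pad the generating tuple with copies of $e$, or repeat an element. The case where $G$ is trivial causes no problem either, since then $x_i=e$ works. Conversely, an $n$-tuple that generates $G$ witnesses generation by at most $n$ elements.

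There is no real obstacle here. The one point that needs care is that the formula of Theorem~\ref{thm:hom-tuple-generation} is correct when evaluated at an arbitrary world $G$ with arbitrary parameters from $G$. That is exactly the parametrized form in which the theorem is stated, so its substitution under the first-order quantifiers is justified.
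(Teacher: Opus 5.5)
Your proposal is correct and is essentially the paper's own proof: the paper uses the same sentence $\exists x_1\cdots\exists x_n\,\forall y\,(y\in\gen{x_1,\dots,x_n})$, with the membership relation supplied by Theorem~\ref{thm:hom-tuple-generation}. Your remarks on padding a shorter generating tuple with $e$ and on evaluating the modal formula with parameters from the current world spell out what the paper leaves implicit.
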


\begin{proof}
A group is generated by at most $n$ elements exactly when
\[
\exists x_1\cdots\exists x_n\,\forall y\,(y\in\gen{x_1,\dots,x_n}),
\]
and the subgroup-membership relation is expressed by Theorem~\ref{thm:hom-tuple-generation}.
\end{proof}

\subsection{Torsion}

\begin{theorem}\label{thm:hom-torsion-element}
The property of being a torsion element is expressible in the modal group theory for homomorphisms. There is a formula $\ord(x)<\infty$ such that for every group $G$,
\[
G\hommodels \ord(x)<\infty
\quad\text{iff}\quad
x\text{ has finite order in }G.
\]
\end{theorem}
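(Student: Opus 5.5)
The plan is to encode torsion through a Baumslag--Solitar-type conjugation $t x t^{-1}=x^2$, and to use Theorem~\ref{thm:hom-cyclic-membership} to detect whether the resulting square root of $x$ already lies in $\gen{x}$. Concretely, I would take
\[
\ord(x)<\infty\;:=\;\necessary\forall t\,\forall u\,\bigl((tu=xt\wedge u^2=x)\to u\in\gen{x}\bigr),
\]
where $u\in\gen{x}$ is the modal formula of Theorem~\ref{thm:hom-cyclic-membership}. Here $u$ plays the role of $t^{-1}xt$, so the formula is inverse-free. The condition $tu=xt$ says that $u=t^{-1}xt$. Given this, the condition $u^2=x$ is equivalent to $t x t^{-1}=x^2$. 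One has to be careful that the nested $\necessary$ inside $u\in\gen{x}$ is evaluated in the current homomorphic image. This is harmless, because Theorem~\ref{thm:hom-cyclic-membership} holds for every group and hence for each image.

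\emph{Forward direction.} Suppose $x$ has finite order in $G$. Then in every homomorphic image $H$ the image $x$ also has finite order $n$. Suppose $t,u\in H$ satisfy $u=t^{-1}xt$ and $u^2=x$. Then $u$ is conjugate to $x$, so $\ord(u)=n$. Since $u^2=x$ also has order $n$, the number $n$ must be odd. In that case $u^2$ generates $\gen{u}$, so $u\in\gen{u^2}=\gen{x}$. By Theorem~\ref{thm:hom-cyclic-membership} applied in $H$, the inner formula holds there. Hence the $\necessary$-statement holds in $G$.

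\emph{Converse.} Suppose $x$ has infinite order in $G$. Then $\gen{x}$ and $\gen{x^2}$ are both infinite cyclic, so we may form the HNN extension
\[
G^*=\gen{G,t\mid t x t^{-1}=x^2},
\]
which identifies $\gen{x}$ with $\gen{x^2}$ via $x\mapsto x^2$. The canonical embedding $G\hookrightarrow G^*$ is a homomorphism. Put $u=t^{-1}xt$. Then $tu=xt$ and $u^2=t^{-1}x^2t=x$. It remains to show $u\notin\gen{x}$. If $t^{-1}xt=x^k$ held, then the word $t^{-1}xtx^{-k}$ would be trivial. Britton's lemma then requires a pinch $t^{-1}xt$ with $x$ in the associated subgroup $\gen{x^2}$ on the relevant side, which fails since $x\notin\gen{x^2}$. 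Therefore $u\notin\gen{x}$ in $G^*$. By Theorem~\ref{thm:hom-cyclic-membership} the inner formula fails at $G^*$, so the formula fails in $G$.

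The main obstacle is choosing the formula so that the forward direction survives collapse. The parity argument shows that in any image where $x$ is torsion, the hypothesis forces odd order, which makes $u$ a power of $x$. Images where no such $t$ exists satisfy the implication vacuously. The Britton's-lemma check in the converse needs only care with orientation of the associated subgroups, and I expect it to be routine.
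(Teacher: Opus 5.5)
Your proof is correct, but it takes a genuinely different route from the paper.

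The paper never moves away from $G$ at the top level. It defines $\ord(x)<\infty$ as $\tau_0(x)\vee\exists y\,(y\neq x\wedge xy\neq e\wedge x\in\gen{y}\wedge y\in\gen{x})$, where $\tau_0$ is a first-order formula for the orders $1,2,3,4,6$. The argument then uses only that $\Z$ has just the two generators $\pm1$, while $C_n$ has $\varphi(n)\geq 3$ generators for every other $n$. So the only modal ingredient is the cyclic-membership predicate of Theorem~\ref{thm:hom-cyclic-membership}, and no new group construction is needed.

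Your formula instead adds an outer $\necessary$ and a second test group, the Baumslag--Solitar-type HNN extension $\gen{G,t\mid txt^{-1}=x^2}$. Both directions check out:
\begin{enumerate}[label=(\roman*)]
    \item Converse: Britton's lemma shows $t^{-1}xt\notin\gen{x}$, and your orientation is right, since $t^{-1}gt$ is a pinch only when $g\in\gen{x^2}$.
    \item Forward direction: $u$ is conjugate to $u^2=x$, so a finite order must be odd. Hence $u\in\gen{u^2}=\gen{x}$ in every image.
\end{enumerate}

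What each approach buys:
\begin{enumerate}[label=(\roman*)]
    \item Your version is uniform. It avoids the totient computation and the exceptional list of small orders.
    \item It costs an extra layer of modality and a normal-form argument.
    \item The paper's version transfers verbatim to any category in which cyclic membership is correctly defined; this is exactly how Lemma~\ref{lem:fp-hom-torsion} is obtained.
    \item Yours additionally needs the category to be closed under your HNN extension. That holds for finitely presented groups (one new generator and one relation) and for countable groups, but it must be checked each time.
\end{enumerate}
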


\begin{proof}
Let $\tau_0(x)$ be any first-order formula expressing that $x$ has order $1$, $2$, $3$, $4$, or $6$. Define
\[
\ord(x)<\infty\;:= \tau_0(x)\vee \exists y\,(y\neq x\wedge xy\neq e\wedge x\in\gen{y}\wedge y\in\gen{x}).
\]
We show that this formula has the desired meaning.

Suppose first that $x$ has finite order $n$. If $n\in\{1,2,3,4,6\}$, then $\tau_0(x)$ holds. Otherwise Euler's totient function satisfies $\varphi(n)\geq 3$, so there is some integer $k$ relatively prime to $n$ with $k\not\equiv \pm1\pmod n$. Let $y=x^k$. Then $y\neq x$ and $xy\neq e$. Since $\gcd(k,n)=1$, B\'ezout gives integers $a,b$ with $ak+bn=1$, and hence
\[
x=x^{ak+bn}=x^{ak}=y^a.
\]
So $x\in\gen{y}$ and $y\in\gen{x}$.

Conversely, suppose there exists some $y$ with $y\neq x$, $xy\neq e$, $x\in\gen{y}$, and $y\in\gen{x}$. Then $\gen{x}=\gen{y}$. If $x$ had infinite order, then $\gen{x}\cong \Z$, and the only generators of $\Z$ are $1$ and $-1$. Translating back, the only generators of $\gen{x}$ would be $x$ and $x^{-1}$, contrary to $y\neq x$ and $xy\neq e$. Hence $x$ has finite order.
\end{proof}

\begin{corollary}\label{cor:hom-torsion-group}
The set of torsion elements is definable in modal group theory for homomorphisms, and the property of being a torsion group is expressible by the sentence
\[
\forall x\,(\ord(x)<\infty).
\]
\end{corollary}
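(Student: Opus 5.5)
The corollary follows almost immediately from Theorem~\ref{thm:hom-torsion-element}. The plan is to take the formula $\ord(x)<\infty$ constructed there, with $x$ as a free variable, as the defining formula for the set of torsion elements. The key observation is that the homomorphic satisfaction of a formula with parameters is determined by the group $G$ together with the assigned element. Theorem~\ref{thm:hom-torsion-element} says that for every group $G$ and every $g\in G$ we have $G\hommodels \ord(x)<\infty\,[g]$ iff $g$ has finite order in $G$. Hence
\[
\{g\in G : G\hommodels \ord(x)<\infty\,[g]\}
\]
is exactly the torsion set of $G$, which gives the first claim.

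For the second claim, I will check that the outer quantifier $\forall x$ in $\forall x\,(\ord(x)<\infty)$ is evaluated in $G$ itself. It is not under any modal operator, since the modal operators only occur inside the subformulas $x\in\gen{y}$ and $y\in\gen{x}$ supplied by Theorem~\ref{thm:hom-cyclic-membership}. So $G\hommodels\forall x\,(\ord(x)<\infty)$ holds iff for every $g\in G$ we have $G\hommodels\ord(x)<\infty\,[g]$. By the first part, this holds iff every element of $G$ has finite order, that is, iff $G$ is a torsion group.

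The only point needing care is the Tarskian clause for first-order connectives and quantifiers in the semantics $\hommodels$. Quantifiers range over the current group, and modal operators shift to homomorphic targets, carrying along the images of the parameters. This is the same convention already used in Corollaries~\ref{cor:hom-cyclic} and~\ref{cor:hom-n-generated}, so I expect no genuine obstacle; the argument is a direct unwinding of definitions.
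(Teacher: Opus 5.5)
Your proposal is correct and follows the paper's own argument: Theorem~\ref{thm:hom-torsion-element} defines the torsion elements, and since the outer quantifier $\forall x$ is not under any modal operator, the sentence $\forall x\,(\ord(x)<\infty)$ holds in $G$ exactly when every element is torsion. Your check on where the modal operators occur in the formula simply makes explicit what the paper's two-line proof leaves implicit.
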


\begin{proof}
Theorem~\ref{thm:hom-torsion-element} defines the torsion elements. A group is torsion exactly when every element is torsion.
\end{proof}

\begin{remark}\label{rem:same-order-fails}
This is not a nondefinability theorem for equality of order in the homomorphism semantics. It says only that the embedding proof and its conjugacy formula give no useful homomorphism definition.
\end{remark}

\section{Non-expressivity in first-order group theory}\label{section:nonexpressivity}

The properties isolated in Section~\ref{section:expressive} are not definable in ordinary first-order group theory. The proofs use standard compactness, upward L\"owenheim--Skolem, and ultraproduct arguments from classical model theory; see, for example, \cite[Chapters~4 and~6]{ChangKeisler}.

\begin{theorem}\label{thm:hom-not-fo-expressible}
None of the following properties is expressible in first-order group theory:
\begin{enumerate}[label=(\arabic*)]
    \item for each fixed $n\geq 1$, element $y$ belongs to the subgroup generated by $x_1,\dots,x_n$;
    \item the group is cyclic;
    \item for each fixed $n\geq 1$, the group is generated by at most $n$ elements;
    \item element $x$ is torsion;
    \item the group is torsion.
\end{enumerate}
Moreover, the set of torsion elements is not first-order definable in the pure language of groups.
\end{theorem}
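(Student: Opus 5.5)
Each item will be refuted by compactness, either directly or via elementary equivalence and ultraproducts. The basic tool is the standard fact that a first-order definable property (with parameters as free variables) is preserved by elementary equivalence of pointed structures. So for each item I exhibit two elementarily equivalent structures, one with the property and one without.

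\textbf{Torsion elements, (4) and the ``moreover'' clause.} Suppose $\psi(x)$ defined ``$x$ has finite order'' in every group. Consider the theory consisting of $\psi(c)$ together with $c^n\neq e$ for all $n\geq1$, in the language with a new constant $c$. Any finite subset is satisfied in $\Z/m\Z$ with $c=1$ and $m$ large, since $c$ is torsion there. By compactness some group has an element $c$ satisfying $\psi$ but of infinite order, which is a contradiction. Equivalently, the nonprincipal ultraproduct $\prod_m \Z/m\Z$ with $c=[(1)_m]$ gives a concrete witness. The same argument shows that no formula, even one allowed to vary only with the group, uniformly defines the torsion elements; that is the definability statement.

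\textbf{Torsion groups, (5).} I take the ultraproduct $\prod_{\mathcal U}\Z/m\Z$. By Łoś it satisfies any sentence true in all the $\Z/m\Z$, including the putative torsion sentence, yet $[(1)_m]$ has infinite order. Hence no sentence expresses being torsion.

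\textbf{Cyclicity and $n$-generation, (2) and (3).} The same ultraproduct works. Each $\Z/m\Z$ is cyclic, hence $n$-generated for every $n\ge1$. The ultraproduct is uncountable (a nonprincipal ultraproduct of finite sets of unbounded size has size continuum), so it is not finitely generated. Alternatively I can use upward Löwenheim--Skolem on $\Z$: an uncountable elementary extension of $\Z$ satisfies any sentence true in $\Z$ but is not countable, hence not $n$-generated.

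\textbf{Subgroup membership, (1).} Suppose $\chi(y,x_1,\dots,x_n)$ defined $y\in\gen{x_1,\dots,x_n}$. I work in $G=\Z^{n+1}$, or simply reduce to $n=1$ by setting $x_2=\dots=x_n=e$, since $\gen{x,e,\dots,e}=\gen{x}$. Take the theory in the language with constants $x,y$ saying $\chi(y,x,e,\dots,e)$ and $y\neq x^k$ for every $k\in\Z$ (each such sentence is first order, using $x^{-1}$ as metanotation). A finite part is satisfied in $\Z$ with $x=1$ and $y=N$ for $N$ larger than every $|k|$ mentioned, since $y\in\gen{x}$ there. Compactness then yields a model where $\chi$ holds but $y\notin\gen{x}$. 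Concretely, a countable nonstandard elementary extension of $(\Z,1,N)$ with $y$ a nonstandard integer works.

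The only point needing care is making the finite-satisfiability witnesses genuinely satisfy the putative formula: this is why the true instances are chosen in groups where the property actually holds (finite cyclic groups, or $\Z$ with $y$ a standard power), while the compactness axioms rule the property out in the limit. I expect this to be routine. The mild obstacle is ensuring the ultraproduct in (2)/(3) fails finite generation, and the cardinality argument settles it.
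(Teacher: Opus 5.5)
Your proof is correct and takes essentially the paper's route: compactness for (1) and (4), a cardinality obstruction for (2)--(3), and an ultraproduct of finite cyclic groups for (5). For (2)--(3) the paper uses upward L\"owenheim--Skolem on $\Z$ and $F_n$; your uncountable ultraproduct, or your L\"owenheim--Skolem alternative on $\Z$, does the same job. For (1), reducing to $n=1$ by substituting $e$ for $x_2,\dots,x_n$ and working in $\Z$ is a harmless simplification of the paper's free-group argument.

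There are two wording slips. First, drop the parenthetical ``even one allowed to vary only with the group.'' Group-by-group definability can hold (for instance $x=x$ in a finite group), and the ``moreover'' clause is the uniform statement, which is just (4). Second, the concrete model in (1) should be a nonstandard elementary extension of $(\Z,1)$ with $y$ interpreted as a nonstandard element. In an extension of $(\Z,1,N)$, $y$ would stay equal to $N$.
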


\begin{proof}
For (1), fix $n\geq 1$ and suppose there were a first-order formula $\varphi(y,x_1,\dots,x_n)$ such that
\[
G\models \varphi[y,x_1,\dots,x_n]
\quad\text{iff}\quad
 y\in\gen{x_1,\dots,x_n}\text{ in }G.
\]
Consider the theory in the language of groups with constants $c,d_1,\dots,d_n$ consisting of the group axioms, the sentence $\varphi(c,d_1,\dots,d_n)$, and the inequalities
\[
c\neq w(d_1,\dots,d_n)
\]
for every group word $w$ in $n$ variables. This theory is inconsistent. But every finite fragment is satisfiable in the free group $F_n=\gen{a_1,\dots,a_n}$: interpret $d_i$ by $a_i$ and choose $c$ to be a word in the $a_i$ different from the finitely many forbidden words. Then $c\in\gen{d_1,\dots,d_n}$, while all finitely many inequalities hold. Compactness gives a contradiction.

For (2), if cyclicity were first-order expressible, then the infinite cyclic group $\Z$ would satisfy a first-order sentence whose models were exactly the cyclic groups. By the upward L\"owenheim--Skolem theorem, that sentence would have models of arbitrarily large infinite cardinalities. But there are no uncountable cyclic groups.

For (3), fix $n\geq 1$. If being generated by at most $n$ elements were first-order expressible, then the free group $F_n$ would satisfy a first-order sentence whose models were exactly the groups generated by at most $n$ elements. By the upward L\"owenheim--Skolem theorem, that sentence would have an uncountable model. But every group generated by at most $n$ elements is countable, since its elements are represented by finite words over a finite alphabet.

For (4), suppose there were a first-order formula $\tau(x)$ such that
\[
G\models \tau[x] \quad\text{iff}\quad x\text{ is torsion in }G.
\]
In the language of groups with a constant $c$, consider the theory consisting of the group axioms, the sentence $\tau(c)$, and the inequalities $c^m\neq 1$ for all positive integers $m$. This theory is not satisfiable, but every finite fragment is satisfiable in a sufficiently large finite cyclic group, interpreting $c$ as a generator. Hence compactness fails.

For (5), let $G_m$ be the finite cyclic group of order $m!$. Each $G_m$ is a torsion group. But in the ultraproduct $\prod_m G_m/U$ for a nonprincipal ultrafilter $U$, the class of generators has infinite order, since the orders of those generators are unbounded. Hence the ultraproduct is not a torsion group. By \L o\'s theorem, torsion-grouphood is not first-order expressible.

The final assertion follows immediately from (4).
\end{proof}

\begin{corollary}\label{cor:hom-compactness-ls}
Modal group theory for homomorphisms fails both compactness and the upward L\"owenheim--Skolem property.
\end{corollary}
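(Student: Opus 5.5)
The plan is to combine the modal definability results of Section~\ref{section:expressive} with the first-order failures in Theorem~\ref{thm:hom-not-fo-expressible}. The classical proofs of compactness and of upward L\"owenheim--Skolem for first-order logic are exactly what rule out those first-order definitions. If the modal language $\LgrpModal$ under $\hommodels$ had either property, the same arguments would go through with the modal formulas in place of the hypothetical first-order ones, and we would reach the same contradictions. So the proof will consist of rerunning two of the arguments from Theorem~\ref{thm:hom-not-fo-expressible}, reading satisfaction as $\hommodels$.

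For compactness, I will use the torsion formula $\ord(x)<\infty$ from Theorem~\ref{thm:hom-torsion-element}. Take a constant $c$ and consider the set of $\LgrpModal$-sentences consisting of the group axioms, $\ord(c)<\infty$, and $c^m\neq e$ for every $m\geq 1$.
\begin{enumerate}
    \item Any finite subset is satisfied, under $\hommodels$, by a large enough finite cyclic group with $c$ interpreted as a generator. Here Theorem~\ref{thm:hom-torsion-element} guarantees that $\ord(c)<\infty$ really holds there.
    \item The whole set is not satisfied by any group, since $c$ would have to be torsion and also have infinite order.
\end{enumerate}
To avoid constants I can instead phrase this with a free variable and satisfiability of sets of formulas. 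Alternatively, I can use the sentence-only version from item~(5): the sentences $\forall x(\ord(x)<\infty)$ together with ``there is an element of order $>m$'' for each $m$. This set is finitely satisfiable in the cyclic groups of order $m!$. It is not satisfiable, because a torsion group can of course have unbounded element orders, so this version needs care. The cleaner sentence-level version is the first one with an existential: $\exists x\,(\ord(x)<\infty\wedge\bigwedge_{m\le N}x^m\neq e)$ for each $N$. Each such sentence holds in a large cyclic group, but there is no single satisfying witness for all $N$ at once. To pass from ``each holds'' to ``all hold of the same $x$'' I will use the constant (or free-variable) formulation, which is standard.

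For upward L\"owenheim--Skolem, I will use Corollary~\ref{cor:hom-cyclic}. Let $\sigma$ be the modal cyclicity sentence. Then $\Z\hommodels\sigma$, so $\sigma$ has an infinite model. Every model of $\sigma$ is cyclic and hence countable, so $\sigma$ has no uncountable model. The same argument works with Corollary~\ref{cor:hom-n-generated}.

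The only delicate point is stating precisely which notion of compactness fails: satisfiability of sets of sentences or formulas relative to $\hommodels$ over the class of all groups. Once that definition is fixed, the rest is routine.
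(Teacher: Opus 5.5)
Your proof is correct and matches the paper's. Compactness fails via the constant-$c$ theory $\{\ord(c)<\infty\}\cup\{c^m\neq e: m\geq 1\}$ using Theorem~\ref{thm:hom-torsion-element}, and upward L\"owenheim--Skolem fails via the modal cyclicity sentence of Corollary~\ref{cor:hom-cyclic}, which holds in $\Z$. Your discarded sentence-only aside is muddled, since both of those sets of sentences are in fact satisfiable (e.g.\ in $\mathbb{Q}/\mathbb{Z}$); a correct sentence-only example would be cyclicity plus torsion plus ``at least $n$ elements'' for each $n$. This plays no role, though, because your final argument uses the constant formulation exactly as the paper does.
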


\begin{proof}
Compactness fails because Theorem~\ref{thm:hom-torsion-element} expresses ``$c$ is torsion'' by a single modal formula, while the theory used in the proof of Theorem~\ref{thm:hom-not-fo-expressible}(4) is finitely satisfiable but not satisfiable.

The upward L\"owenheim--Skolem property fails because cyclicity is modally expressible by Corollary~\ref{cor:hom-cyclic}, the infinite cyclic group is a model of that modal sentence, and there are no uncountable cyclic groups.
\end{proof}

\section{The complexity of modal group theory}\label{section:complexity}

We now use the expressive power established above to interpret arithmetic in modal group theory for homomorphisms. The organization follows the embedding companion paper: first I interpret Presburger arithmetic with divisibility using an infinite-order parameter, and then I use purely arithmetic definability facts to recover multiplication.

\subsection{Arithmetic with an infinite-order parameter}

\begin{theorem}\label{thm:presburger-hom}
Presburger arithmetic with divisibility over the integers is interpretable in the modal group theory for homomorphisms with a parameter of infinite order. Let $G$ be a group with an element $g$ of infinite order. Then there is a computable translation $\phi\mapsto \phi^*(x)$ from arithmetical sentences to formulas of modal group theory such that
\[
\langle \Z,+,\mid,0,1\rangle\models \phi
\quad\text{iff}\quad
G\hommodels \phi^*[g].
\]
\end{theorem}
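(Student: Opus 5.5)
We interpret integers as elements of the cyclic subgroup $\gen{g}$, identifying $k\in\Z$ with $g^k$. Since $g$ has infinite order, this map $k\mapsto g^k$ is a bijection from $\Z$ onto $\gen{g}$. By Theorem~\ref{thm:hom-cyclic-membership}, the domain $\gen{g}$ is defined by the modal formula $y\in\gen{x}$ with $x$ interpreted as $g$.

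The arithmetic primitives are then handled as follows. The constant $0$ is $e$ and the constant $1$ is $x$. Addition becomes the group product, which is commutative on $\gen{g}$, so $a+b=c$ translates to $y_a y_b=y_c$.

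Divisibility needs more care. The relation $a\mid b$ in $\Z$ corresponds to $g^b\in\gen{g^a}$. Since $\gen{g}\cong\Z$, we have $g^b\in\gen{g^a}$ iff $b\in a\Z$ iff $a\mid b$. This includes the edge case $a=0$, where $\gen{e}=\{e\}$ and $0\mid b$ iff $b=0$. So $a\mid b$ translates to the modal formula $y_b\in\gen{y_a}$, again supplied by Theorem~\ref{thm:hom-cyclic-membership}.

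The translation $\phi\mapsto\phi^*(x)$ is then defined by recursion on formulas:
\begin{itemize}
\item atomic formulas are replaced as above, after unnesting terms into atomic graphs via fresh existential variables;
\item Boolean connectives are kept;
\item quantifiers are relativized, so $\exists v\,\psi$ becomes $\exists y_v\,(y_v\in\gen{x}\wedge\psi^*)$, and dually for $\forall$.
\end{itemize}
This recursion is clearly computable.

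Correctness is proved by induction on formulas. The claim is that for every assignment of integers $\bar k$, $\Z\models\psi[\bar k]$ iff $G\hommodels\psi^*[g,g^{\bar k}]$.

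The key point is that the modal operators occur only inside the subformulas $y\in\gen{x}$. Their truth value at $G$ is given exactly by Theorem~\ref{thm:hom-cyclic-membership}, which holds at every group and every parameter assignment, so it applies at $G$ itself. Every other connective and quantifier of $\psi^*$ is evaluated in $G$ itself, not in homomorphic images. Hence:
\begin{itemize}
\item the atomic cases follow from the bijection and the divisibility computation above;
\item the Boolean cases are immediate;
\item the quantifier cases use that the relativizing formula carves out exactly $\gen{g}$, the image of the bijection.
\end{itemize}

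The main subtlety is making sure that the semantics of $\hommodels$ on non-modal parts is evaluated in $G$ itself, and that the membership formula is correct for arbitrary parameters $y,x$, including $y_a=e$. Theorem~\ref{thm:hom-cyclic-membership} guarantees both, so no further obstacle is expected.
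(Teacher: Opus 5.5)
Your proposal is correct and follows essentially the same route as the paper: you represent integers by powers $g^i$, addition by group multiplication, and divisibility by $g^j\in\gen{g^i}$ via Theorem~\ref{thm:hom-cyclic-membership}, and you guard quantifiers to $\gen{g}$, with an induction over assignments into $\gen{g}$. Your explicit check of the case $a=0$, and your observation that the only modal operators occur inside the membership predicate (so everything else is evaluated at $G$ itself), spell out details the paper leaves to the reader.
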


\begin{proof}
Represent the integer $i\in\Z$ by the group element $g^i$. Then $0$ is represented by the identity and $1$ by $g$. Addition is interpreted by group multiplication on $\gen{g}$:
\[
i+j=k \quad\text{iff}\quad g^ig^j=g^k.
\]
Divisibility is interpreted exactly as before:
\[
i\mid j \quad\text{iff}\quad g^j\in\gen{g^i}.
\]
The modal ingredient needed here is the predicate $y\in\gen{x}$, which is expressible by Theorem~\ref{thm:hom-cyclic-membership}.

We translate arithmetic formulas after first rewriting them so that their atomic formulas are of the forms $x+y=z$, $x=y$, $x\mid y$, $x=0$, or $x=1$. The constant $1$ is represented by the parameter $g$. Quantifiers are guarded to the copy of $\Z$ inside $\gen{g}$. Thus the recursive clauses are
\begin{align*}
(\neg \phi)^*&=\neg\phi^*,\\
(\phi\wedge\psi)^*&=\phi^*\wedge\psi^*,\\
(\exists y\,\phi)^*&=\exists y\,(y\in\gen{g}\wedge\phi^*),
\end{align*}
with the obvious atomic translations. The induction hypothesis is applied only to assignments of the free arithmetic variables into the interpreted copy $\gen{g}$. The guarded quantifier clause above is precisely what preserves that invariant. The correctness verification is otherwise identical to the embedding case, with Theorem~\ref{thm:hom-cyclic-membership} replacing its embedding analogue.
\end{proof}

\begin{lemma}\label{lem:squaring}
The squaring function is definable in Presburger arithmetic with divisibility over the integers,
\[
\langle \Z,+,\mid,0,1\rangle.
\]
That is, there is an arithmetic formula $\sigma(x,y)$ such that
\[
\langle \Z,+,\mid,0,1\rangle\models \sigma[x,y]
\quad\text{iff}\quad
y=x^2.
\]
\end{lemma}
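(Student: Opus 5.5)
The idea is to use the classical fact that, in the presence of divisibility, the least common multiple is definable. Together with the identity $\operatorname{lcm}(x,x+1)=x(x+1)$ for consecutive integers, this lets us define squaring with only $+$ and $\mid$. Since divisibility on $\Z$ ignores sign, I will work with this identity up to sign and fix the sign at the end.

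\emph{Step 1 (lcm up to sign).} Define
\[
L(a,b,m)\;:=\;a\mid m\wedge b\mid m\wedge\forall n\,\bigl(a\mid n\wedge b\mid n\to m\mid n\bigr).
\]
In $\langle\Z,+,\mid,0,1\rangle$ this holds exactly when $m=\pm\operatorname{lcm}(a,b)$, with the convention that $\operatorname{lcm}(a,b)=0$ if $a=0$ or $b=0$.

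\emph{Step 2 (the product $x(x+1)$).} Since $\gcd(x,x+1)=1$, we have $\operatorname{lcm}(x,x+1)=\pm x(x+1)$. Moreover, $x(x+1)\geq0$ for every integer $x$. So $z=x(x+1)$ iff $L(x,x+1,z)$ holds and $z\geq0$.

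\emph{Step 3 (defining the order).} I need a Presburger-with-divisibility definition of $z\geq0$. Lagrange's four-square theorem would need squaring, so it is circular and I will not use it. Instead I will define $\geq0$ via a sign trick. For $z\neq0$, exactly one of $z,-z$ equals $x(x+1)$ when $z$ is a product of consecutive integers, because both $x(x+1)$ and $(-x-1)(-x)$ are nonnegative. So I will simply say: $y=x^2$ iff there is $z$ with $L(x,x+1,z)$, $y+x=z$, and additionally $y$ is the member of $\{z-x,-z-x\}$ that works.

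To pin this down, use the symmetric identity with $x-1$: $\operatorname{lcm}(x-1,x)=\pm x(x-1)$, and $x(x-1)=x^2-x$. Then $x^2$ is the unique $y$ such that $y+x=\pm\operatorname{lcm}(x,x+1)$ and $y-x=\pm\operatorname{lcm}(x-1,x)$. Concretely,
\[
\sigma(x,y):=\exists z\,\exists w\,\bigl(L(x,x+1,z)\wedge L(x-1,x,w)\wedge y=z+(-x)\wedge y=w+x\bigr)
\]
with the signs of $z,w$ free. Here $y=z-x$ is metanotation expressible with $+$.

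\emph{Step 4 (uniqueness).} This is the main obstacle. I must check that a wrong sign choice cannot produce a spurious $y$. Put $P=x^2+x$ and $Q=x^2-x$. The constraints say $y+x=\pm P$ and $y-x=\pm Q$, hence $(\pm P)-(\pm Q)=2x$. The choice $(+,+)$ gives $2x$ correctly, yielding $y=x^2$. The choice $(-,-)$ gives $-2x$, which is consistent only if $x=0$, where $y=0=x^2$ anyway. A mixed choice gives $\pm(P+Q)=\pm 2x^2=2x$, forcing $x^2=\pm x$, i.e.\ $x\in\{0,\pm1\}$. In those cases I check directly that $y=1$ or $y=0$, and I will patch them by adding the explicit disjuncts for $x=0,1,-1$ (with $-1$ defined by $x+1=0$). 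With those patches, $\sigma$ defines squaring.
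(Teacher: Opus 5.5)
Your proposal is correct and is essentially the paper's proof: the paper's $\sigma$ also says that $y+x$ and $y-x$ have exactly the same multiples as the common multiples of $x,x+1$ and of $x-1,x$ respectively (your $L$ is an equivalent formulation), and it settles the resulting sign ambiguity by the same four-case analysis. The one difference is that your extra disjuncts for $x\in\{0,\pm1\}$ are unnecessary, since your own check shows the mixed-sign cases already force $y=x^2$ (the paper stops there); adding disjuncts could only enlarge the defined relation, never remove a spurious solution.
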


\begin{proof}
Over the integers, two integers have the same set of multiples exactly when they have the same absolute value. For all integers $x$, the number $x$ is relatively prime both to $x+1$ and to $x-1$. Hence the common multiples of $x$ and $x+1$ are exactly the multiples of $x(x+1)$, and the common multiples of $x$ and $x-1$ are exactly the multiples of $x(x-1)$.

To keep the formula literally in the language $\{+,\mid,0,1\}$, introduce variables $u$ and $t$ to stand for $x-1$ and $y-x$. Define $\sigma(x,y)$ to be
\[
\begin{aligned}
\exists u\exists t\Bigl(&u+1=x\wedge x+t=y\\
&\wedge \forall z\bigl(((x\mid z)\wedge((x+1)\mid z))\leftrightarrow((y+x)\mid z)\bigr)\\
&\wedge \forall z\bigl(((x\mid z)\wedge(u\mid z))\leftrightarrow(t\mid z)\bigr)\Bigr).
\end{aligned}
\]
This says that $y+x$ has the same multiples as $x(x+1)$ and that $y-x$ has the same multiples as $x(x-1)$.

Thus $\sigma(x,y)$ holds iff there are signs $\varepsilon,\delta\in\{\pm1\}$ such that
\[
y+x=\varepsilon x(x+1),\qquad y-x=\delta x(x-1).
\]
If $(\varepsilon,\delta)=(1,1)$, then immediately $y=x^2$. If $(\varepsilon,\delta)=(1,-1)$, then adding the equations gives $y=x$, and the first equation forces $x=0$ or $x=1$; in both cases $y=x^2$. If $(\varepsilon,\delta)=(-1,1)$, then $y=-x$, and the first equation forces $x=0$ or $x=-1$; again $y=x^2$. Finally, if $(\varepsilon,\delta)=(-1,-1)$, then $y=-x^2$, and the first equation then forces $x=0$, so $y=x^2$. Hence $\sigma$ defines exactly the squaring relation.
\end{proof}

\begin{lemma}\label{lem:multiplication}
Multiplication is definable in $\langle \Z,+,\mid,0,1\rangle$. That is, there is an arithmetic formula $\mu(x,y,z)$ such that
\[
\langle \Z,+,\mid,0,1\rangle\models \mu[x,y,z]
\quad\text{iff}\quad
z=x\cdot y.
\]
\end{lemma}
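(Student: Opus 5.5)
We obtain multiplication from squaring by polarization. Lemma~\ref{lem:squaring} gives a formula $\sigma(x,y)$ defining $y=x^2$ in $\langle \Z,+,\mid,0,1\rangle$. Over the integers,
\[
(x+y)^2 = x^2 + 2xy + y^2,
\]
so $z=xy$ holds iff $(x+y)^2 = x^2+y^2+z+z$. The problem is to express this identity without dividing by $2$. Writing $2z$ as $z+z$ avoids any division, since the identity is linear in $z$.

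Accordingly, we define
\[
\mu(x,y,z):=\exists s\,\exists a\,\exists b\,\exists c\,\exists w\,\bigl(s=x+y\wedge\sigma(x,a)\wedge\sigma(y,b)\wedge\sigma(s,c)\wedge w=a+b\wedge c=w+z+z\bigr).
\]
Each conjunct is literally in the language $\{+,\mid,0,1\}$: the additive atoms are of the form $u+v=t$, and the longer sum $w+z+z$ can be unpacked with one further existential witness $v$ satisfying $z+z=v\wedge w+v=c$.

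For correctness we argue in both directions.

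\emph{Forward direction.} Given witnesses satisfying the formula, Lemma~\ref{lem:squaring} forces $a=x^2$, $b=y^2$ and $c=(x+y)^2$. Hence
\[
2z=(x+y)^2-x^2-y^2=2xy.
\]
Since $\Z$ is torsion-free (cancellation of $2$ in $\Z$), this gives $z=xy$.

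\emph{Converse direction.} If $z=xy$, take the evident witnesses $s=x+y$, $a=x^2$, $b=y^2$, $c=(x+y)^2$, $w=a+b$ and $v=z+z$; each conjunct then holds.

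There is no real obstacle here. All the difficulty was absorbed in Lemma~\ref{lem:squaring}, and the only point needing care is to avoid halving, which the $z+z$ formulation handles.
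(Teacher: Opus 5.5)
Your proposal is correct and is essentially the paper's own proof: the paper likewise defines $\mu$ by $\exists u\exists v\exists w\bigl(\sigma(x,u)\wedge\sigma(y,v)\wedge\sigma(x+y,w)\wedge z+z+u+v=w\bigr)$, using polarization of the definable squaring relation and writing $2z$ as $z+z$ to avoid halving. You also unpack the sums into atomic formulas of the form $u+v=t$ and cancel the factor $2$ in $\Z$ explicitly, which the paper leaves implicit.
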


\begin{proof}
By Lemma~\ref{lem:squaring}, squaring is definable. Using
\[
(x+y)^2=x^2+xy+yx+y^2,
\]
we see that $z=xy$ iff
\[
2z+x^2+y^2=(x+y)^2.
\]
Thus multiplication is defined by the formula
\[
\exists u\exists v\exists w\bigl(\sigma(x,u)\wedge\sigma(y,v)\wedge\sigma(x+y,w)\wedge z+z+u+v=w\bigr),
\]
where $\sigma$ is the squaring formula from Lemma~\ref{lem:squaring}. This formula uses only $+$, $\mid$, $0$, and $1$.
\end{proof}

\begin{corollary}\label{cor:ring-with-parameter-hom}
The ring of integers is interpretable in modal group theory for homomorphisms with a parameter of infinite order.
\end{corollary}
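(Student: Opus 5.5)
The plan is to combine Theorem~\ref{thm:presburger-hom} with Lemma~\ref{lem:multiplication}. Fix a group $G$ and an element $g\in G$ of infinite order. The interpretation of Theorem~\ref{thm:presburger-hom} has three components. The domain is $\gen{g}$, defined by the formula $y\in\gen{g}$ of Theorem~\ref{thm:hom-cyclic-membership}. Equality is group equality. The integer $i$ is represented by $g^i$, which is well defined because $g$ has infinite order, so $i\mapsto g^i$ is a bijection $\Z\to\gen{g}$. Under this correspondence, addition, $0$, $1$ and divisibility are given by modal formulas with parameter $g$.

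The first step is to interpret multiplication. Take the arithmetic formula $\mu(x,y,z)$ from Lemma~\ref{lem:multiplication}; it lies in the language $\{+,\mid,0,1\}$. Apply the translation $\phi\mapsto\phi^*$ of Theorem~\ref{thm:presburger-hom} to it, treating it as a formula with free variables and relativizing its quantifiers to $\gen{g}$. This gives a modal formula $\mu^*(x,y,z,g)$. Theorem~\ref{thm:presburger-hom} is stated for sentences, but its proof proceeds by induction on formulas over assignments into $\gen{g}$. That induction yields, for all integers $i,j,k$,
\[
G\hommodels\mu^*[g^i,g^j,g^k,g]\quad\text{iff}\quad \langle\Z,+,\mid,0,1\rangle\models\mu[i,j,k]\quad\text{iff}\quad k=ij.
\]
So $\mu^*$ defines the graph of multiplication on the copy of $\Z$. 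It is also functional there, because $\mu$ defines a total function on $\Z$.

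The second step is to assemble the interpretation of $\langle\Z,+,\cdot,0,1\rangle$. Addition, $0$ and $1$ come from Theorem~\ref{thm:presburger-hom}, and multiplication comes from $\mu^*$. Any ring-language sentence can first be rewritten into the language $\{+,\mid,0,1\}$ by substituting $\mu$ for each multiplication atom, and then translated with $^*$. This gives a computable translation $\phi\mapsto\phi^{**}$ satisfying
\[
\langle\Z,+,\cdot,0,1\rangle\models\phi\quad\text{iff}\quad G\hommodels\phi^{**}[g].
\]
This is exactly the claimed interpretation of the ring of integers, with a parameter of infinite order.

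The only point needing care is the passage from sentences to formulas with free variables in Theorem~\ref{thm:presburger-hom}. The translation is compositional, with guarded quantifiers and the invariant that free variables take values in $\gen{g}$. The substitution of a definable function into atomic formulas is routine. I therefore expect no genuine obstacle; the corollary should follow directly from the two cited results.
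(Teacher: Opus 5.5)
Your proposal is correct and follows the paper's proof, which simply combines Theorem~\ref{thm:presburger-hom} with Lemma~\ref{lem:multiplication}. Your extra care makes explicit what the paper leaves implicit: extending the translation to formulas with free variables over assignments into $\gen{g}$, and flattening nested ring terms before replacing multiplication atoms by $\mu$.
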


\begin{proof}
Combine Theorem~\ref{thm:presburger-hom} with Lemma~\ref{lem:multiplication}, which defines multiplication in $\langle\Z,+,\mid,0,1\rangle$.
\end{proof}

\begin{theorem}\label{thm:ring-of-integers-hom}
The theory of the ring of integers is interpretable in modal group theory for homomorphisms. There is a computable translation $\phi\mapsto \phi^{\dagger}$ from sentences of the ring language $\{+,\cdot,0,1\}$ to modal group-theoretic assertions such that
\[
\begin{aligned}
\langle \Z,+,\cdot,0,1\rangle\models \phi
\quad\text{iff}\quad
\phi^{\dagger}\text{ holds}\\
\text{at all groups in the homomorphism semantics.}
\end{aligned}
\]
Equivalently, $\phi^{\dagger}$ holds at all groups if and only if it holds at the trivial group.
\end{theorem}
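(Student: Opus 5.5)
The idea is to get rid of the infinite-order parameter of Corollary~\ref{cor:ring-with-parameter-hom} by quantifying over it modally. Every group maps to $\Z$; for instance, the trivial homomorphism into $\Z$ is a homomorphism. So from any group $G$ one can possibly reach a group containing an element of infinite order. I would define
\[
\phi^{\dagger}\;:=\;\possible\exists x\,\bigl(\neg(\ord(x)<\infty)\wedge\phi^{*}(x)\bigr),
\]
where $\phi^*$ is the translation from Theorem~\ref{thm:presburger-hom}, applied after first rewriting $\phi$ into $\{+,\mid,0,1\}$ via Lemma~\ref{lem:multiplication}. Both the rewriting and the translation are effective, so $\phi\mapsto\phi^\dagger$ is computable. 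Note that $\ord(x)<\infty$ is the formula of Theorem~\ref{thm:hom-torsion-element}.

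The verification proceeds in two directions.

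Suppose first that $\langle\Z,+,\cdot,0,1\rangle\models\phi$, and let $G$ be any group. Take a homomorphism $h\colon G\to\Z$, say the trivial one, and let $x=1\in\Z$. By Theorem~\ref{thm:hom-torsion-element}, $\Z\hommodels\neg(\ord(x)<\infty)$. By Theorem~\ref{thm:presburger-hom} together with Lemma~\ref{lem:multiplication}, $\Z\hommodels\phi^*[x]$. Hence $G\hommodels\phi^\dagger$.

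Conversely, suppose $G\hommodels\phi^\dagger$ for some group $G$. Then there are a homomorphism $G\to H$ and an element $x\in H$ such that $H\hommodels\neg(\ord(x)<\infty)$, so $x$ has infinite order in $H$, and $H\hommodels\phi^*[x]$. Theorem~\ref{thm:presburger-hom}, applied to $H$ and the infinite-order element $x$, then gives $\langle\Z,+,\mid,0,1\rangle\models$ the rewritten $\phi$, and therefore $\Z\models\phi$.

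The point to check carefully is that truth in $H$ of the inner formula is evaluated with the homomorphism semantics from $H$ onward. This matches the statement of Theorem~\ref{thm:presburger-hom}, which quantifies over arbitrary $G$ with an element of infinite order, so the argument applies verbatim to $H$.

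These two directions show that $\phi^\dagger$ holds at some group iff $\Z\models\phi$ iff $\phi^\dagger$ holds at every group. This already yields the stated equivalence. In particular, truth at the trivial group, which is one group among all, is equivalent to truth at all groups. The only subtle point is the correct use of the torsion formula to guarantee that the witness has infinite order, and Theorem~\ref{thm:hom-torsion-element} supplies exactly this. No group-specific obstacle remains beyond that.
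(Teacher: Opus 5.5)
Your proposal is correct and is essentially the paper's proof. You use the same definition $\phi^{\dagger}=\possible\exists x\,(\neg(\ord(x)<\infty)\wedge\phi^*(x))$ and the same reasoning. The only difference is presentation: you spell out the forward direction via the trivial homomorphism into $\Z$ with witness $1$, and the converse via Theorem~\ref{thm:presburger-hom} applied at the target group. The paper compresses both into the observation that the truth of a $\possible$-sentence does not depend on the base group.
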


\begin{proof}
By Theorem~\ref{thm:hom-torsion-element}, the condition that a parameter have infinite order is expressible as $\neg(\ord(x)<\infty)$. Moreover, Theorem~\ref{thm:hom-cyclic-membership} expresses that another element belongs to the cyclic subgroup generated by that parameter. Thus we can dispose of the parameter by quantifying over it inside the scope of $\possible$.

Let $\phi\mapsto \phi^*(x)$ be the parameter-dependent translation from Corollary~\ref{cor:ring-with-parameter-hom}. Define
\[
\phi^{\dagger}:=\possible\exists x\,\bigl(\neg(\ord(x)<\infty)\wedge \phi^*(x)\bigr).
\]
Then
\[
\langle \Z,+,\cdot,0,1\rangle\models \phi
\quad\text{iff}\quad
G\hommodels \phi^{\dagger}
\]
for all groups $G$. The verification is exactly the same as in the parameter-dependent interpretation, using the two definability results just cited.

Finally, in the homomorphism semantics the truth of a sentence of the form $\possible\psi$ does not depend on the base group: if some group $H$ satisfies $\psi$, then for every group $G$ there is a homomorphism $G\to H$ (for instance the trivial homomorphism), so $G\hommodels \possible\psi$. In particular, $\phi^{\dagger}$ holds at all groups if and only if it holds at the trivial group.
\end{proof}

Recall that a set $A$ is \emph{one--one reducible} to a set $B$ if there is a total injective computable function $f\colon\N\to\N$ such that $n\in A$ exactly when $f(n)\in B$. A \emph{computable isomorphism} between $A$ and $B$ is a computable bijection $p\colon\N\to\N$ such that $n\in A$ exactly when $p(n)\in B$.

All theories below are regarded as subsets of $\N$ via a fixed G\"odel coding of sentences; codes that are not sentences of the relevant language are not elements of the theory. We shall use the following totalization convention for sentence translations. Suppose that $\tau$ is a computable translation from source-language sentences to target-language sentences, injective on sentence codes, and truth-preserving for the source and target theories under consideration. Before using $\tau$ as a one--one reduction, replace it harmlessly by the truth-equivalent tagged translation
\[
\tau^{\sharp}(\sigma):=(\exists z\,z=z)\wedge \tau(\sigma),
\]
with the displayed parenthesization fixed as part of the coding. Let $\chi_n$ be the right-associated conjunction of $n+1$ copies of the true target-language sentence $\exists z\,z=z$, and set
\[
\delta_n:=(\forall z\,z\neq z)\wedge \chi_n.
\]
The sentences $\delta_n$ are pairwise syntactically distinct target-language contradictions, and their displayed outer form is disjoint from the image of $\tau^{\sharp}$. Define a total function on all natural numbers by
\[
F_{\tau}(n)=
\begin{cases}
\ulcorner\tau^{\sharp}(\sigma)\urcorner, & \text{if }n=\ulcorner\sigma\urcorner\text{ for a source-language sentence }\sigma,\\
\ulcorner\delta_n\urcorner, & \text{if }n\text{ is not a source-language sentence code.}
\end{cases}
\]
The predicate ``$n$ codes a source-language sentence'' is decidable, so $F_{\tau}$ is total computable. It is injective by the injectivity of $\tau$ on sentence codes, the pairwise distinctness of the $\delta_n$, and the disjoint outer tags. Moreover, non-sentence codes are outside the source theory, while each $\delta_n$ is outside the target theory. Thus every such sentence translation induces a genuine one--one reduction of sets of natural numbers. We use Myhill's isomorphism theorem in the form that if $A\leq_1 B$ and $B\leq_1 A$, then there is a computable isomorphism between $A$ and $B$ \cite{Myhill55}.

By \emph{true arithmetic} we mean the complete first-order theory of $\langle\N,+,\cdot,0,1\rangle$.

\begin{lemma}\label{lem:true-arithmetic-to-Z-hom}
There is a computable injective translation $\sigma\mapsto\sigma^{\Z}$ from sentences of first-order arithmetic to sentences of the ring language $\{+,\cdot,0,1\}$ such that
\[
\langle\N,+,\cdot,0,1\rangle\models\sigma
\quad\text{if and only if}\quad
\langle\Z,+,\cdot,0,1\rangle\models\sigma^{\Z}.
\]
\end{lemma}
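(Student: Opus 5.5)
The plan is to relativize quantifiers to the naturals, using Lagrange's four-square theorem to define $\N$ inside $\langle\Z,+,\cdot,0,1\rangle$. Let
\[
N(x):=\exists a\,\exists b\,\exists c\,\exists d\,\bigl(x=a\cdot a+b\cdot b+c\cdot c+d\cdot d\bigr).
\]
By Lagrange's theorem and the nonnegativity of sums of squares in $\Z$, $\langle\Z,+,\cdot,0,1\rangle\models N[x]$ holds exactly when $x\ge 0$. The translation $\sigma\mapsto\sigma^{\Z}$ is then defined by recursion on formulas. Atomic formulas of the arithmetic language $\{+,\cdot,0,1\}$ are left unchanged, which is possible because the two signatures coincide. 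The translation commutes with the Boolean connectives, and it guards quantifiers:
\[
(\exists y\,\phi)^{\Z}:=\exists y\,(N(y)\wedge\phi^{\Z}),
\]
with $\forall$ handled either as $\neg\exists\neg$ or dually by $\forall y\,(N(y)\to\phi^{\Z})$. The bound variables of $N$ are chosen fresh, say the first variables not occurring in the formula being translated, so that no capture occurs. With that convention fixed, the recursion is a computable syntactic transformation.

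Correctness follows from the standard relativization lemma. By induction on formulas $\phi(\bar y)$, for every assignment of natural numbers $\bar n$ to the free variables,
\[
\langle\N,+,\cdot,0,1\rangle\models\phi[\bar n]
\quad\text{iff}\quad
\langle\Z,+,\cdot,0,1\rangle\models\phi^{\Z}[\bar n].
\]
The atomic case holds because $\N$ is a substructure of $\Z$: sums and products of naturals computed in $\Z$ agree with those computed in $\N$, and $0$ and $1$ agree. The connective cases are immediate. In the quantifier case, the guard $N(y)$ restricts witnesses to exactly the nonnegative integers, so the induction hypothesis applies to them. For sentences this gives the displayed equivalence of the lemma.

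The remaining issue is injectivity, and it is routine. Distinct sentences have distinct translations because the translation is a structural recursion that preserves the top-level constructor of every formula. The only inserted material is the fixed block $N(y)\wedge{}$ placed inside each existential, so the original formula can be recovered uniquely by parsing the translation and stripping these guards. For example, two distinct existentials $\exists y\,\phi$ and $\exists y'\,\phi'$ translate to $\exists y\,(N(y)\wedge\phi^{\Z})$ and $\exists y'\,(N(y')\wedge\phi'^{\Z})$. These are distinct: if $y\neq y'$ the translations differ in the bound variable, and otherwise $\phi\neq\phi'$, so $\phi^{\Z}\neq\phi'^{\Z}$ by induction. The one delicacy is that the fresh variables used inside $N$ must be determined by $y$ and the formula rather than chosen arbitrarily. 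Under that convention, injectivity follows by induction on formula complexity.
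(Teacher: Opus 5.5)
Your proof is correct. For the semantic part it is the paper's argument: you define the nonnegative integers by Lagrange's four-square theorem, relativize quantifiers to that formula, and check correctness by induction on formulas.

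The difference is how you get injectivity. The paper never claims that the bare relativization $\sigma\mapsto\sigma^{\circ}$ is injective. Instead it conjoins the tag $\alpha_{\ulcorner\sigma\urcorner}$, the true sentence $\overline n=\overline n$ with $n=\ulcorner\sigma\urcorner$. This records the code of $\sigma$ syntactically, so injectivity is immediate, with no parsing argument and no dependence on how quantifiers or fresh variables are handled. It is the same padding device the paper uses for all its one--one reductions.

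Your route avoids the artificial tag but relies on unique readability and on the translation preserving the top-level constructor of each formula. That forces you to commit to the clause $(\forall y\,\phi)^{\Z}:=\forall y\,(N(y)\to\phi^{\Z})$. If $\forall$ is primitive in the source language and you take the $\neg\exists\neg$ option you also offer, then the distinct sentences $\forall y\,\phi$ and $\neg\exists y\,\neg\phi$ both translate to $\neg\exists y\,(N(y)\wedge\neg\phi^{\Z})$. In that case injectivity fails, and your claim that top-level constructors are preserved is false. Once you fix the dual clause, your inductive injectivity argument goes through, and it gives a cleaner translation than the paper's tagged one.
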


\begin{proof}
Inside $\langle\Z,+,\cdot,0,1\rangle$ define
\[
\mathrm{Nat}(x):=\exists a\exists b\exists c\exists d\,\bigl(x=a^2+b^2+c^2+d^2\bigr).
\]
By Lagrange's four-square theorem, $\mathrm{Nat}(x)$ holds exactly of the nonnegative integers. Let $\sigma\mapsto\sigma^{\circ}$ be the usual relativized translation: atomic formulas of first-order arithmetic are translated verbatim into the ring language, Boolean connectives are translated recursively, and quantifiers are relativized to $\mathrm{Nat}(x)$:
\[
(\exists x\,\psi)^{\circ}:=\exists x\,(\mathrm{Nat}(x)\wedge \psi^{\circ}),\qquad
(\forall x\,\psi)^{\circ}:=\forall x\,(\mathrm{Nat}(x)\to\psi^{\circ}).
\]
Since addition and multiplication on the definable domain $\mathrm{Nat}(\Z)$ agree with addition and multiplication on $\N$, we have
\[
\langle\N,+,\cdot,0,1\rangle\models\sigma
\quad\text{if and only if}\quad
\langle\Z,+,\cdot,0,1\rangle\models\sigma^{\circ}.
\]
To make the sentence map injective, fix a computable family $(\alpha_n)_{n\in\N}$ of pairwise syntactically distinct true ring sentences; for definiteness, let $\alpha_n$ be the sentence $\overline n=\overline n$. Define
\[
\sigma^{\Z}:=\alpha_{\ulcorner\sigma\urcorner}\wedge \sigma^{\circ}.
\]
The added tag is true in $\langle\Z,+,\cdot,0,1\rangle$, so it preserves the displayed equivalence. The map $\sigma\mapsto\sigma^{\Z}$ is computable and injective on G\"odel codes because the left conjunct syntactically records the code of $\sigma$.
\end{proof}

\begin{corollary}\label{cor:true-arithmetic-hom}
The theory of true arithmetic is one--one reducible to modal group theory for homomorphisms.
\end{corollary}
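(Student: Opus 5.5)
The plan is to compose the two computable translations already available and then apply the totalization convention. Lemma~\ref{lem:true-arithmetic-to-Z-hom} gives a computable injective map $\sigma\mapsto\sigma^{\Z}$ from arithmetic sentences to ring sentences with
\[
\langle\N,+,\cdot,0,1\rangle\models\sigma\iff\langle\Z,+,\cdot,0,1\rangle\models\sigma^{\Z}.
\]
Theorem~\ref{thm:ring-of-integers-hom} gives a computable map $\phi\mapsto\phi^{\dagger}$ from ring sentences to $\LgrpModal$-sentences with
\[
\langle\Z,+,\cdot,0,1\rangle\models\phi\iff G\hommodels\phi^{\dagger}\ \text{for every group }G.
\]
Here the target theory is the set of $\LgrpModal$-sentences true at all groups under the homomorphism semantics. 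By the last paragraph of that theorem, this is equivalently truth at the trivial group, and equally truth at any fixed group, since $\phi^{\dagger}$ begins with $\possible$. So the composite $\tau(\sigma):=(\sigma^{\Z})^{\dagger}$ is computable and truth-preserving. Since $\phi^{\dagger}$ is a fixed, well-defined sentence and does not depend on a choice of group, the composite lands in any of the natural readings of ``modal group theory for homomorphisms''.

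The one nontrivial point is injectivity of $\tau$ on sentence codes, which the totalization convention requires. Injectivity of $\sigma\mapsto\sigma^{\Z}$ is given, so it remains to check that $\phi\mapsto\phi^{\dagger}$ is injective. I would argue this syntactically. The translation $\phi\mapsto\phi^*(x)$ of Theorem~\ref{thm:presburger-hom}, extended through Lemma~\ref{lem:multiplication}, is defined by structural recursion. Each clause wraps the translated subformulas in a fixed outer shape determined by the connective, quantifier, or atomic form being translated. Hence $\phi$ can be recovered from $\phi^*$ by unique readability, and the outer wrapper $\possible\exists x(\neg(\ord(x)<\infty)\wedge\cdot)$ is fixed. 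One caveat: the ring-to-$\{+,\mid,0,1\}$ step first rewrites atomic formulas into unnested form, which could in principle be non-injective. If that is worrying, I would sidestep it the same way Lemma~\ref{lem:true-arithmetic-to-Z-hom} does. Conjoin to $\phi^{\dagger}$ a true tag recording $\ulcorner\phi\urcorner$, for instance $\possible\exists x(\neg(\ord(x)<\infty)\wedge\alpha_{\ulcorner\phi\urcorner}^*(x))$ with $\alpha_n$ the sentence $\overline n=\overline n$. Alternatively, use $n+1$ iterated copies of $\exists z\,z=z$ as in $\chi_n$, which is simpler and purely group-theoretic. Either tag is true at every group and makes injectivity immediate. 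I expect this injectivity bookkeeping to be the main (though minor) obstacle.

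Finally I would apply the totalization convention to $\tau$, obtaining the total injective computable $F_\tau$. Then $n\in$ true arithmetic iff $F_\tau(n)$ is in the homomorphic modal theory: for sentence codes by the chain of equivalences above, and for non-sentence codes because neither side holds, since each $\delta_n$ is a contradiction. This gives the required one--one reduction.
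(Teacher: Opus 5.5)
Your proposal is correct and is essentially the paper's proof. You compose $\sigma\mapsto\sigma^{\Z}$ with $\phi\mapsto\phi^{\dagger}$, force injectivity by conjoining a valid, syntactically distinct tag, and invoke the totalization convention; the only difference is bookkeeping, since the paper pads directly with $\top_n^{\mathrm{grp}}$ for $n=\ulcorner\sigma\urcorner$ and so never needs injectivity of $\phi\mapsto\phi^{\dagger}$ (your unique-readability caveat) or even of $\sigma\mapsto\sigma^{\Z}$, which is also why your purely group-theoretic $\chi_n$-style tag is the better of your two fallbacks.
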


\begin{proof}
Let $\sigma$ be a first-order arithmetic sentence and let $n=\ulcorner\sigma\urcorner$. Write
\[
\top_n^{\mathrm{grp}}:=\underbrace{(e=e)\wedge\cdots\wedge(e=e)}_{n+1\text{ conjuncts}}
\]
and define
\[
\widetilde\sigma:=\top_n^{\mathrm{grp}}\wedge (\sigma^{\Z})^{\dagger},
\]
where $\sigma^{\Z}$ is the ring sentence from Lemma~\ref{lem:true-arithmetic-to-Z-hom} and $(\sigma^{\Z})^{\dagger}$ is the translation from Theorem~\ref{thm:ring-of-integers-hom}. The map $\sigma\mapsto\widetilde\sigma$ is computable and injective on G\"odel codes, because the padding $\top_n^{\mathrm{grp}}$ remembers $n$ syntactically. Moreover, $\top_n^{\mathrm{grp}}$ is valid at every group, so
\[
\begin{aligned}
\langle\N,+,\cdot,0,1\rangle\models\sigma
&\quad\text{iff}\quad \langle\Z,+,\cdot,0,1\rangle\models\sigma^{\Z}\\
&\quad\text{iff}\quad \widetilde\sigma\text{ is homomorphically valid in all groups.}
\end{aligned}
\]
By the totalization convention above, this injective sentence translation induces a one--one reduction of true arithmetic to modal group theory for homomorphisms.
\end{proof}

\subsection{Finitely presented groups}

We write $\fpmodels$ for modal satisfaction in the category whose objects are finitely presented groups and whose arrows are arbitrary homomorphisms. Throughout, a word on $n$ generators is coded by the G\"odel number of its letter-sequence. If $e$ codes a finite presentation, $\EltPred(e,w)$ is the decidable predicate asserting that $w$ is a word on the generators named in the presentation coded by $e$.

The first theorem in this subsection arithmetizes modal truth over finitely presented groups. I then establish the cyclic-subgroup and torsion definability results needed for the converse reduction from true arithmetic in the category of finitely presented groups and homomorphisms.

\begin{theorem}\label{thm:fp-hom}
For every code $e$ of a finitely presented group $G_e$, every tuple of word-codes $\bar w=\langle w_1,\dots,w_k\rangle$ of the same length as $\bar x$ such that $\bigwedge_{i=1}^k \EltPred(e,w_i)$, and every formula $\phi(\bar x)$ in the modal language of groups (with the modal operators interpreted via homomorphisms between finitely presented groups), there is a computable arithmetical formula $\phi^{\dagger}(e,\bar w)$ such that
\[
(G_e,\bar w)\fpmodels \phi
\quad\text{iff}\quad
\N\models \phi^{\dagger}(e,\bar w).
\]
Consequently, the homomorphic modal theory of finitely presented groups is uniformly interpretable in true arithmetic.
\end{theorem}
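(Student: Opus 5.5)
We define the translation $\phi\mapsto\phi^{\dagger}(e,\bar w)$ by recursion on the structure of $\phi$, using arithmetical predicates that code finitely presented groups, words, and homomorphisms between them. First we fix the basic coding. A code $e$ names a finite presentation $\gen{a_1,\dots,a_m\mid r_1,\dots,r_\ell}$. The word problem predicate
\[
\mathrm{Eq}(e,u,v)\;:\Leftrightarrow\; u^{-1}v \text{ is a product of conjugates of relators } r_j^{\pm1}\text{ freely reducing correctly}
\]
is $\Sigma_1$, because it asserts the existence of a finite derivation. Equality in $G_e$ is therefore arithmetically definable. The same holds for the multiplication relation $\mathrm{Mult}(e,u,v,z)$, which is $\mathrm{Eq}(e,uv,z)$ with concatenation, and for the identity relation, which is $\mathrm{Eq}(e,u,\varepsilon)$. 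A homomorphism $G_e\to G_{e'}$ is coded by a finite tuple $c=\langle c_1,\dots,c_m\rangle$ of words of $G_{e'}$, which are the images of the generators. The predicate $\HomPred(e,e',c)$ says that each $c_i$ satisfies $\EltPred(e',c_i)$ and that for every relator $r_j$ we have $\mathrm{Eq}(e',r_j[c],\varepsilon)$, where $r_j[c]$ is the computable substitution. The map on words, $\ApplyPred(c,w)=w[c]$, is primitive recursive. Every homomorphism between finitely presented groups arises this way, because a homomorphism is determined by the images of the generators and exists exactly when the relators map to the identity (von Dyck). Finally, $\FPGroupPred(e)$ is the decidable predicate saying that $e$ codes a finite presentation.

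The recursion runs as follows. For atomic formulas, $(x_i x_j=x_k)^{\dagger}$ is $\mathrm{Eq}(e,w_iw_j,w_k)$, and equalities involving $e$ or other variables are handled in the same way. Boolean connectives commute with $\dagger$. For a quantifier, $(\exists y\,\psi)^{\dagger}(e,\bar w)$ is $\exists v\,(\EltPred(e,v)\wedge\psi^{\dagger}(e,\bar w,v))$. This step is correct because every element of $G_e$ is represented by some word, and $\psi^{\dagger}$ respects $\mathrm{Eq}$, which we check inductively. For the modality, $(\possible\psi)^{\dagger}(e,\bar w)$ is
\[
\exists e'\,\exists c\,\bigl(\FPGroupPred(e')\wedge\HomPred(e,e',c)\wedge\psi^{\dagger}(e',\ApplyPred(c,\bar w))\bigr),
\]
and we take $\necessary=\neg\possible\neg$. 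The correctness claim $(G_e,\bar w)\fpmodels\phi$ iff $\N\models\phi^{\dagger}(e,\bar w)$ is proved by induction on $\phi$ simultaneously for all $e$ and $\bar w$. The modal step uses the fact that each homomorphism $h\colon G_e\to H$ with $H$ finitely presented is, up to isomorphism of $H$, of the form $G_e\to G_{e'}$ coded by some $c$. Modal truth is invariant under isomorphism of the target, since the isomorphism composes with all further homomorphisms and the category is closed under it. So quantifying over codes suffices. The map $\phi\mapsto\phi^{\dagger}$ is visibly computable, and the final statement follows by taking $\phi$ to be a sentence.

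We expect the main obstacle to be the representation invariance in the induction. The formula $\phi^{\dagger}$ takes word codes as inputs, so we must prove that $\phi^{\dagger}(e,\bar w)$ and $\phi^{\dagger}(e,\bar w')$ agree whenever $\mathrm{Eq}(e,w_i,w_i')$ holds for all $i$. We also need the modal clause to be independent of which presentation $e'$ of the target is chosen. We would handle both by strengthening the induction hypothesis. The strengthened hypothesis states the semantic equivalence directly: $\N\models\phi^{\dagger}(e,\bar w)$ iff $G_e$ satisfies $\phi$ at the elements denoted by $\bar w$. Invariance then follows because the right-hand side depends only on the elements denoted. The isomorphism-invariance of $\fpmodels$ has to be checked once, by a separate induction on formulas.
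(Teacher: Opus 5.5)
Your proposal is correct and follows essentially the paper's route. You code presentations and homomorphisms by tuples of generator-image words, use the $\Sigma^0_1$ word problem for the atomic clause, let the modal clause quantify over a target presentation code and a homomorphism code, and prove correctness by induction on formulas. The differences are organizational: you get representative independence directly from the semantic induction hypothesis (taken uniformly over all $e$ and all word tuples) rather than from the paper's separate syntactic induction, and you make explicit the isomorphism-invariance step, replacing an arbitrary finitely presented target by some $G_{e'}$, which the paper leaves implicit.
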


Here each word-code $w_i$ names the element of $G_e$ represented by that word. Thus $(G_e,\bar w)$ denotes the expansion of $G_e$ by constants interpreted as the elements represented by $w_1,\dots,w_k$.

\begin{proof}
A finite presentation
\[
G=\gen{a_1,\dots,a_n\mid r_1,\dots,r_m}
\]
is G\"odel-coded by the triple $(n,m,\langle r_1,\dots,r_m\rangle)$. The set of such codes is decidable, so we define an arithmetical predicate $\FPGroupPred(e)$ saying that $e$ encodes a triple $(n,m,R)$ where $R$ is a length-$m$ sequence and every entry of $R$ is a word on $n$ generators.

A word on $n$ generators is coded by the G\"odel number of its letter-sequence. We write $\EltPred(e,w)$ for the decidable predicate saying that $w$ is a word on the generators named in the presentation coded by $e$.

A homomorphism code is a natural number $h$ coding a length-$n$ sequence of word-codes
\[
h=\langle w_1',\dots,w_n'\rangle
\]
in a second presentation $e'$. It represents the homomorphism sending each generator $a_i$ of $e$ to the word $w_i'$ in $G_{e'}$. Let $\TuplePred_n(h)$ be the decidable predicate asserting that $h$ codes a sequence of length $n$, and write $(h)_i$ for its $i$th component. Write $\HomPred(e,h,e')$ for the assertion that $h$ induces a well-defined homomorphism $G_e\to G_{e'}$. Concretely, $\HomPred(e,h,e')$ says that $\TuplePred_n(h)$ holds, that each $(h)_i$ is a word on the generators of $e'$, and that every relator of $e$ maps to the identity in $G_{e'}$. This is a $\Sigma^0_1$ predicate: one searches for derivations witnessing that the image of each relator equals $1$ in $G_{e'}$. Given $h$, the primitive-recursive predicate $\ApplyPred(h,w,w')$ asserts that $w'$ is the image of $w$ under $h$.

Write $u\equiv_e v$ when the word $uv^{-1}$ is derivably trivial from the presentation coded by $e$; this is the arithmetical relation saying that $u$ and $v$ represent the same element of $G_e$. If $\HomPred(e,h,e')$, $u\equiv_e v$, $\ApplyPred(h,u,u')$, and $\ApplyPred(h,v,v')$, then $u'\equiv_{e'}v'$. Indeed, a derivation of $uv^{-1}=1$ in $G_e$ is carried by the coded homomorphism to a derivation of $u'(v')^{-1}=1$ in $G_{e'}$.

For each formula $\phi$ and assignment $\bar w$ we define an arithmetical predicate $\ValPred_{\phi}(e,\bar w)$. For an atomic formula $t_1=t_2$, compute primitive-recursively a word-code
\[
v=v(t_1,t_2,\bar w)
\]
for the word obtained from $t_1t_2^{-1}$ after substituting the assignment $\bar w$ and then performing concatenation and inversion on word-codes. We let $\ValPred_{t_1=t_2}(e,\bar w)$ assert that there exists a derivation of $v=1$ in $G_e$, a $\Sigma^0_1$ condition via the usual semidecision procedure for the word problem. Boolean combinations and first-order quantifiers are handled by the obvious logical clauses, with quantifiers ranging over word-codes satisfying $\EltPred(e,w)$.

It remains to give the recursive modal clauses. For the modal case, write $\bar w=\langle w_1,\dots,w_k\rangle$. Possibility is interpreted by existential quantification over a finitely presented target and a coded homomorphism:
\[
\begin{aligned}
\ValPred_{\possible\psi}(e,\bar w):=\exists e'\exists h\exists w_1'\cdots\exists w_k'\Bigl(&\FPGroupPred(e')\wedge \HomPred(e,h,e')\\
&\wedge \bigwedge_{i=1}^k\bigl(\ApplyPred(h,w_i,w_i')\wedge \EltPred(e',w_i')\bigr)\\
&\wedge \ValPred_{\psi}(e',\langle w_1',\dots,w_k'\rangle)\Bigr).
\end{aligned}
\]
Necessity is interpreted by the corresponding universal condition:
\[
\begin{aligned}
\ValPred_{\necessary\psi}(e,\bar w):=\forall e'\forall h\forall w_1'\cdots\forall w_k'\Bigl(&\FPGroupPred(e')\wedge \HomPred(e,h,e')\\
&\wedge \bigwedge_{i=1}^k\bigl(\ApplyPred(h,w_i,w_i')\wedge \EltPred(e',w_i')\bigr)\\
&\rightarrow \ValPred_{\psi}(e',\langle w_1',\dots,w_k'\rangle)\Bigr).
\end{aligned}
\]
Thus quantification is over both the target presentation $e'$ and the homomorphism code $h$.

For each $\phi(\bar x)$ we set $\phi^{\dagger}(e,\bar w)$ to mean $\ValPred_{\phi}(e,\bar w)$. The mapping $\phi\mapsto \phi^{\dagger}$ is primitive-recursive because the above clauses are.

We shall also use the following representative independence fact. If $\bar u$ and $\bar v$ are tuples of word-codes of the same length with $u_i\equiv_e v_i$ for all $i$, then
\[
\ValPred_{\psi}(e,\bar u)\quad\Longleftrightarrow\quad \ValPred_{\psi}(e,\bar v)
\]
for every modal formula $\psi$ with the corresponding free variables. This is proved by induction on $\psi$. The atomic case follows because term evaluation respects the congruence $\equiv_e$: if corresponding variables name the same elements of $G_e$, then the two evaluated words for any group term are again equivalent modulo $e$. The Boolean and first-order quantifier clauses are immediate. For the modal step, suppose $\ValPred_{\possible\theta}(e,\bar u)$ is witnessed by $e',h,\bar u'$. Choose $\bar v'$ by applying the same coded homomorphism $h$ to $\bar v$. The observation above gives $u_i'\equiv_{e'}v_i'$ for each $i$, and the induction hypothesis for $\theta$ yields $\ValPred_{\theta}(e',\bar v')$. This gives witnesses for $\ValPred_{\possible\theta}(e,\bar v)$; the converse is symmetric. The corresponding statement for boxed formulas follows from the universal modal clause.

We prove by induction on $\phi$ that
\[
(G_e,\bar w)\fpmodels \phi\quad\text{iff}\quad \N\models \phi^{\dagger}(e,\bar w).
\]
The atomic, Boolean, and first-order cases are straightforward, using representative independence to pass from word-codes to the group elements they name. For $\possible\psi$, suppose first that the recursive predicate holds, witnessed by $e'$, $h$, and an image tuple $\bar w'$. These data decode to an actual homomorphism $G_e\to G_{e'}$ witnessing modal satisfaction. Conversely, any witnessing homomorphism can be encoded by choosing words for the images of the generators. Representative-independence ensures that the choice of such words does not affect the truth value of the recursive predicate. The case of $\necessary\psi$ is the corresponding universal argument over all coded target presentations, homomorphism codes, and image tuples.

Finally, if $\phi$ is a sentence, the universal closure over $e$ of
\[
\FPGroupPred(e)\to \phi^{\dagger}(e)
\]
yields a computable reduction from homomorphic modal validity in all finitely presented groups to first-order arithmetic truth. Moreover, if $n=\ulcorner\phi\urcorner$ is the G\"odel code of $\phi$, let
\[
\top_n^{\mathrm{arith}}:=\underbrace{(0=0)\wedge\cdots\wedge(0=0)}_{n+1\text{ conjuncts}}
\]
and define the padded arithmetic sentence
\[
\widehat\phi:=\top_n^{\mathrm{arith}}\wedge \forall e\,\bigl(\FPGroupPred(e)\to \phi^{\dagger}(e)\bigr).
\]
Then $\phi\mapsto \widehat\phi$ is computable and injective on G\"odel codes, while $\widehat\phi$ has the same truth value in arithmetic as the unpadded universal closure. By the totalization convention above, the induced reduction of the homomorphic modal theory of finitely presented groups to true arithmetic may be taken one--one. Consequently, the homomorphic modal theory of finitely presented groups is uniformly interpretable in true arithmetic.
\end{proof}

The converse reduction from true arithmetic requires the arithmetic-interpretation formulas to work not only in the category of all groups and homomorphisms, but also in the category of finitely presented groups and homomorphisms. The following lemmas verify the necessary definability statements in that category.

\begin{lemma}\label{lem:fp-hom-cyclic-membership}
Let $G$ be a finitely presented group and let $x,y\in G$. In the category of finitely presented groups and homomorphisms,
\[
G\fpmodels \necessary\forall t\,(tx=xt\to ty=yt)
\quad\text{iff}\quad y\in\gen{x}\text{ in }G.
\]
\end{lemma}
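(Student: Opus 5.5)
The forward direction goes exactly as in Theorem~\ref{thm:hom-cyclic-membership}. If $y=x^k$ in $G$, then for every homomorphism $h\colon G\to H$, whether or not $H$ is finitely presented, any element of $H$ commuting with $h(x)$ also commutes with $h(x)^k=h(y)$. So the boxed formula holds at every finitely presented target, and the restriction to finitely presented groups is harmless.

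For the converse, assume $y\notin\gen{x}$ and produce a finitely presented target in which the formula fails. The natural candidate is again the HNN extension
\[
G^*=\gen{G,s\mid sx=xs}.
\]
This time we impose only the single relation $sx=xs$, not one relation for each $\gamma\in\gen{x}$. Commuting with $x$ forces commuting with every power of $x$, so this presentation defines the same group as the HNN extension over the cyclic subgroup $\gen{x}$ used before. If $G=\gen{a_1,\dots,a_n\mid r_1,\dots,r_m}$ and $x$ is represented by a word $w$, then $G^*=\gen{a_1,\dots,a_n,s\mid r_1,\dots,r_m,\ s w s^{-1}w^{-1}}$, which is visibly finitely presented. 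The canonical map $\iota\colon G\to G^*$ is an arrow of the category of finitely presented groups and homomorphisms.

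In $G^*$ we have $s\iota(x)=\iota(x)s$. By Lemma~\ref{lem:hnn-centralizer} with $A=\gen{x}$, the element $s$ commutes with $\iota(y)$ only if $y\in\gen{x}$. Since $y\notin\gen{x}$, the element $t=s$ witnesses the failure of $\forall t\,(tx=xt\to ty=yt)$ at $(G^*,\iota(x),\iota(y))$, so the boxed formula fails at $G$ in $\fpmodels$. Two background facts are used here: $\iota$ is injective (Britton's lemma) and $G$ is identified with its image. Lemma~\ref{lem:hnn-centralizer} is stated for an arbitrary subgroup $A$, so it applies directly even when $x$ has finite order. The only new point compared with the general case is the finite presentability of $G^*$, and the one-relator description above settles it.
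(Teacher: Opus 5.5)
Your proof is correct and is essentially the paper's argument. The paper also forms the finitely presented HNN extension $\gen{a_1,\dots,a_m,s\mid r_1,\dots,r_\ell,\ sw_x=w_xs}$, uses the canonical embedding as an arrow of the category, and applies Lemma~\ref{lem:hnn-centralizer} at $t=s$; the only difference is that you argue contrapositively, exhibiting $s$ as a counterexample when $y\notin\gen{x}$, whereas the paper instantiates the assumed necessity at $t=s$ directly.
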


\begin{proof}
If $y=x^k$, then for every homomorphism from $G$ to a finitely presented group, any element commuting with the image of $x$ also commutes with the image of $y=x^k$. Hence the modal formula holds.

Conversely, suppose the modal formula holds in the category of finitely presented groups and homomorphisms. Fix a finite presentation $G=\gen{a_1,\dots,a_m\mid r_1,\dots,r_\ell}$ and choose a word $w_x$ in the generators representing $x$. Form
\[
G^*=\gen{a_1,\dots,a_m,s\mid r_1,\dots,r_\ell,\; sw_x=w_xs}.
\]
This is finitely presented, and it is the HNN extension obtained by adjoining a stable letter centralizing $\gen{x}$. The canonical embedding $\iota\colon G\hookrightarrow G^*$ is, in particular, a homomorphism between finitely presented groups. Therefore the displayed first-order sentence holds in $G^*$ with $\iota(x),\iota(y)$ in place of $x,y$. Since $s\iota(x)=\iota(x)s$, instantiating $t=s$ gives $s\iota(y)=\iota(y)s$. Lemma~\ref{lem:hnn-centralizer} implies that the elements of $\iota(G)$ commuting with $s$ are exactly $\iota(\gen{x})$, so $\iota(y)\in\iota(\gen{x})$, and hence $y\in\gen{x}$ in $G$.
\end{proof}

\begin{lemma}\label{lem:fp-hom-torsion}
Let $G$ be a finitely presented group and $x\in G$. In the category of finitely presented groups and homomorphisms, the formula $\ord(x)<\infty$ from Theorem~\ref{thm:hom-torsion-element} defines precisely the elements of finite order.
\end{lemma}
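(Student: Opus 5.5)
The formula $\ord(x)<\infty$ from Theorem~\ref{thm:hom-torsion-element} is
\[
\tau_0(x)\vee \exists y\,(y\neq x\wedge xy\neq e\wedge x\in\gen{y}\wedge y\in\gen{x}).
\]
Its only modal ingredients are the two occurrences of the cyclic-membership formula $\necessary\forall t\,(tx=xt\to ty=yt)$, applied to the pairs $(y,x)$ and $(x,y)$. The plan is to rerun the proof of Theorem~\ref{thm:hom-torsion-element} verbatim. The one change is that every appeal to Theorem~\ref{thm:hom-cyclic-membership} becomes an appeal to Lemma~\ref{lem:fp-hom-cyclic-membership}.

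First, I would check by induction on the structure of this particular formula that, evaluated at a finitely presented group $G$ under $\fpmodels$, it has the same meaning as its first-order reading with $\in\gen{\cdot}$ taken literally. The subformula $\tau_0(x)$ is first-order, so its truth at $G$ does not depend on the category. The first-order quantifier $\exists y$ ranges over $G$ itself, since modal evaluation at $G$ quantifies over the current world. Each conjunct $x\in\gen{y}$ and $y\in\gen{x}$ is evaluated at $G$ with parameters from $G$. By Lemma~\ref{lem:fp-hom-cyclic-membership}, which applies to arbitrary elements of any finitely presented group, each such conjunct holds under $\fpmodels$ exactly when the corresponding cyclic membership holds in $G$. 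Hence $G\fpmodels\ord(x)<\infty$ holds iff either $x$ has order $1,2,3,4$ or $6$, or there is $y\in G$ with $y\ne x$, $y\ne x^{-1}$ and $\gen{x}=\gen{y}$.

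Second, I would quote the purely group-theoretic argument from Theorem~\ref{thm:hom-torsion-element}, which shows that this condition is equivalent to $x$ having finite order. If $x$ has finite order $n\notin\{1,2,3,4,6\}$, then $\varphi(n)\ge 3$ and we take $y=x^k$ for some unit $k\not\equiv\pm1\pmod n$. If $x$ has infinite order, the only generators of $\gen{x}\cong\Z$ are $x^{\pm1}$, and moreover $\tau_0(x)$ fails. This step never leaves $G$, so it transfers unchanged.

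The only point needing care is that modal operators nested inside the formula, namely the boxes inside the membership clauses, are evaluated in the restricted category. That is exactly what Lemma~\ref{lem:fp-hom-cyclic-membership} handles, via the finitely presented HNN extension $G^*$. I therefore expect no real obstacle beyond making this substitution explicit.
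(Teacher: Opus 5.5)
Your proposal is correct and follows the paper's proof. The paper also notes that the argument of Theorem~\ref{thm:hom-torsion-element} uses only the correctness of the cyclic-membership predicate plus elementary facts about cyclic groups, and it substitutes Lemma~\ref{lem:fp-hom-cyclic-membership} for Theorem~\ref{thm:hom-cyclic-membership}; your explicit check that the only modal subformulas are the two membership clauses, evaluated at $G$ with parameters from $G$, just spells out what the paper's ``verbatim'' transfer leaves implicit.
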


\begin{proof}
The proof of Theorem~\ref{thm:hom-torsion-element} uses only the correctness of cyclic subgroup membership and elementary facts about cyclic groups. Lemma~\ref{lem:fp-hom-cyclic-membership} supplies the required cyclic subgroup predicate in the category of finitely presented groups and homomorphisms, so the same argument applies verbatim.
\end{proof}

\begin{lemma}\label{lem:fp-ring-with-parameter}
Let $G$ be a finitely presented group and let $g\in G$ have infinite order. Let $\phi^*(x)$ be the parameter-dependent translation of a ring sentence $\phi$ obtained from Theorem~\ref{thm:presburger-hom} together with Lemma~\ref{lem:multiplication}. When modal operators are interpreted in the category of finitely presented groups and homomorphisms,
\[
G\fpmodels \phi^*[g]
\quad\text{iff}\quad
\langle\Z,+,\cdot,0,1\rangle\models\phi.
\]
\end{lemma}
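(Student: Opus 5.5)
The argument repeats the proof of Theorem~\ref{thm:presburger-hom} and Corollary~\ref{cor:ring-with-parameter-hom}, but now inside the category of finitely presented groups. The point to check is that the only modal ingredient of $\phi^*$ is the predicate $y\in\gen{x}$. Lemma~\ref{lem:fp-hom-cyclic-membership} shows that this predicate is correct at every finitely presented group, in the category of finitely presented groups and homomorphisms.

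First I fix the shape of the translation. Using Lemma~\ref{lem:multiplication}, I rewrite the ring sentence $\phi$ as an equivalent sentence of $\langle\Z,+,\mid,0,1\rangle$ whose atomic formulas are $x+y=z$, $x=y$, $x\mid y$, $x=0$ and $x=1$. The translation $\phi^*(x)$ then has the following form:
\begin{itemize}
\item $x+y=z$ becomes $xy=z$;
\item $x=0$ becomes $x=e$;
\item $x=1$ becomes $x=g$;
\item $i\mid j$ becomes $y_j\in\gen{y_i}$;
\item quantifiers are guarded by $y\in\gen{g}$.
\end{itemize}
So $\phi^*$ is a first-order formula in which the box operator occurs only inside instances of the cyclic-membership formula $\necessary\forall t\,(tx=xt\to ty=yt)$. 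This formula is evaluated at $G$ itself, with parameters from $G$. It is never evaluated at some other group reached modally.

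Next I prove, by induction on subformulas $\psi$ of the rewritten arithmetic sentence, the following claim. For every assignment $\bar k\in\Z$, $G\fpmodels\psi^*[g^{\bar k},g]$ holds iff $\langle\Z,+,\mid,0,1\rangle\models\psi[\bar k]$. The map $i\mapsto g^i$ is a bijection $\Z\to\gen{g}$ because $g$ has infinite order, and it carries $+$ to the group product. This settles the atomic cases for addition, equality, $0$ and $1$.

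For divisibility, Lemma~\ref{lem:fp-hom-cyclic-membership} gives $G\fpmodels g^j\in\gen{g^i}$ iff $g^j\in\gen{g^i}$ in $G$. Since $g$ has infinite order, this holds iff $j\in i\Z$, that is, iff $i\mid j$. This is also correct when $i=0$.

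Boolean steps are immediate. For the guarded quantifier step, Lemma~\ref{lem:fp-hom-cyclic-membership} again shows that the guard $y\in\gen{g}$ picks out exactly the elements $g^k$ with $k\in\Z$. So the quantifier ranges over the interpreted copy of $\Z$, and the induction hypothesis applies.

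Finally, since multiplication is definable in $\langle\Z,+,\mid,0,1\rangle$, the translated sentence is true exactly when $\phi$ holds in the ring of integers. This gives the stated equivalence.

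The step that needs the most care is the modal atomic case. Truth of $\necessary$ in $\fpmodels$ quantifies only over homomorphisms into finitely presented groups. So I may not use Theorem~\ref{thm:hom-cyclic-membership}, and must invoke Lemma~\ref{lem:fp-hom-cyclic-membership} instead. Its HNN witness $G^*$ is finitely presented because only the single commutation relation $sw_x=w_xs$ is added. This is why the finitely presented version of the lemma was proved separately. Everything else in the argument is non-modal and transfers unchanged.
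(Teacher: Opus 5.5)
Your proposal is correct and follows the paper's proof: you represent $i$ by $g^i$, use Lemma~\ref{lem:fp-hom-cyclic-membership} to certify that both the quantifier guards and the divisibility atoms have their intended meaning in the finitely presented category, and rerun the induction from Theorem~\ref{thm:presburger-hom}, with Lemmas~\ref{lem:squaring} and~\ref{lem:multiplication} supplying multiplication. Your additional checks, namely the $i=0$ divisibility case and the observation that the modal operators in $\phi^*$ are only ever evaluated at $G$ itself, are accurate details within that same argument.
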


\begin{proof}
The interpretation represents the integer $i$ by $g^i$. Addition is group multiplication on the cyclic subgroup $\gen{g}$, and divisibility is cyclic subgroup membership. By Lemma~\ref{lem:fp-hom-cyclic-membership}, the cyclic-subgroup predicate has the intended meaning in the category of finitely presented groups and homomorphisms. Thus the quantifier guards range exactly over the interpreted copy of $\Z$, and the divisibility atoms have their intended arithmetic meaning. Lemmas~\ref{lem:squaring} and~\ref{lem:multiplication} are first-order facts about $\langle\Z,+,\mid,0,1\rangle$, so the same induction on formulas as in Theorem~\ref{thm:presburger-hom} proves the displayed equivalence.
\end{proof}

\begin{corollary}\label{cor:fp-ring-translation}
Let $\phi$ be a sentence of the ring language $\{+,\cdot,0,1\}$, and let $\phi^{\dagger}$ be the modal sentence produced in Theorem~\ref{thm:ring-of-integers-hom}. If modal operators are interpreted in the category of finitely presented groups and homomorphisms, then the following are equivalent:
\begin{enumerate}[label=(\arabic*)]
    \item $\langle\Z,+,\cdot,0,1\rangle\models\phi$;
    \item the trivial group satisfies $\phi^{\dagger}$;
    \item every finitely presented group satisfies $\phi^{\dagger}$.
\end{enumerate}
\end{corollary}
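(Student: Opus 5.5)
I will prove the cycle (1)$\Rightarrow$(3)$\Rightarrow$(2)$\Rightarrow$(1), working throughout in the category of finitely presented groups and homomorphisms. Recall that
\[
\phi^{\dagger}=\possible\exists x\,\bigl(\neg(\ord(x)<\infty)\wedge\phi^*(x)\bigr).
\]

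The implication (3)$\Rightarrow$(2) is immediate, since the trivial group is finitely presented, e.g.\ by the empty presentation on zero generators or by $\gen{a\mid a}$.

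For (1)$\Rightarrow$(3), I will use the observation from the end of the proof of Theorem~\ref{thm:ring-of-integers-hom}: a possibility sentence does not depend on the base group. Fix a finitely presented group $G$. Take the target $\Z=\gen{a\mid\ }$, which is finitely presented, together with the trivial homomorphism $G\to\Z$. It then suffices to show
\[
\Z\fpmodels\exists x\,\bigl(\neg(\ord(x)<\infty)\wedge\phi^*(x)\bigr).
\]
I witness $x$ by the generator $a$. By Lemma~\ref{lem:fp-hom-torsion}, $\Z\fpmodels\neg(\ord(x)<\infty)[a]$, since $a$ has infinite order. By Lemma~\ref{lem:fp-ring-with-parameter}, $\Z\fpmodels\phi^*[a]$, since $\phi$ holds in the integers.

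For (2)$\Rightarrow$(1), suppose the trivial group satisfies $\phi^{\dagger}$. Then there is a finitely presented $H$, a homomorphism $\{e\}\to H$, and an element $g\in H$ such that
\[
H\fpmodels\neg(\ord(x)<\infty)[g]\wedge\phi^*[g].
\]
Lemma~\ref{lem:fp-hom-torsion}, applied in $H$, shows that $g$ has infinite order. Lemma~\ref{lem:fp-ring-with-parameter}, applied to $H$ and $g$, then gives $\langle\Z,+,\cdot,0,1\rangle\models\phi$.

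The main point needing care is that the modal subformulas inside $\phi^*$ and $\ord$ are evaluated at $H$, or at $\Z$, within the finitely presented category, rather than at the base group. This is exactly the setting of Lemmas~\ref{lem:fp-hom-torsion} and~\ref{lem:fp-ring-with-parameter}, both of which are stated for an arbitrary finitely presented group. So once those lemmas are applied at the target group, no further obstacle remains.
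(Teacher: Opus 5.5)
Your proof is correct and follows the paper's argument. It uses the same cycle of implications, with Lemmas~\ref{lem:fp-hom-torsion} and~\ref{lem:fp-ring-with-parameter} applied at the target group, and differs only cosmetically in (1)$\Rightarrow$(3): the paper witnesses $\possible$ by the map $G\to G\times\Z$, $g\mapsto(g,0)$, with parameter $(e_G,1)$, whereas you use the trivial homomorphism $G\to\Z$ and the generator, which works equally well because $\phi^{\dagger}$ is a parameter-free possibility sentence.
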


\begin{proof}
The parameter-free translation uses the modal definition of infinite order, and Lemma~\ref{lem:fp-ring-with-parameter} gives the correctness of the parameter-dependent ring interpretation in the category of finitely presented groups and homomorphisms. By Lemma~\ref{lem:fp-hom-torsion}, the infinite-order condition has its intended meaning in that category.

If $\langle\Z,+,\cdot,0,1\rangle\models\phi$ and $G$ is finitely presented, then $G\times\Z$ is finitely presented and the map $G\to G\times\Z$ given by $g\mapsto(g,0)$ witnesses $G\fpmodels\phi^{\dagger}$, using the element $(e_G,1)$ as the infinite-order parameter. Hence (1) implies (3), and (3) implies (2) because the trivial group is finitely presented.

Conversely, if the trivial group satisfies $\phi^{\dagger}$ in the category of finitely presented groups and homomorphisms, then some finitely presented group $H$ with an infinite-order element $h$ satisfies the parameter-dependent translation $\phi^*(h)$. Lemma~\ref{lem:fp-ring-with-parameter} then gives $\langle\Z,+,\cdot,0,1\rangle\models\phi$. Thus (2) implies (1).
\end{proof}

\begin{theorem}\label{thm:fp-hom-isomorphic}
The homomorphic modal theory of finitely presented groups is computably isomorphic to the theory of true arithmetic.
\end{theorem}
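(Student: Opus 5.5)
The argument applies Myhill's isomorphism theorem, so it suffices to exhibit one--one reductions in both directions between true arithmetic and the homomorphic modal theory of finitely presented groups, the latter viewed as the set of codes of modal sentences valid at every finitely presented group under $\fpmodels$.

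\emph{From the modal theory to true arithmetic.} Theorem~\ref{thm:fp-hom} already supplies the computable map $\phi\mapsto\widehat\phi$, which is injective on G\"odel codes and satisfies: $\phi$ holds at all finitely presented groups iff $\N\models\widehat\phi$. Applying the totalization convention to $\tau(\phi)=\widehat\phi$ turns this sentence translation into a total injective computable function on $\N$. That function sends non-sentence codes to false sentences, so it is a one--one reduction of the modal theory to true arithmetic.

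\emph{From true arithmetic to the modal theory.} I would reuse the map $\sigma\mapsto\widetilde\sigma=\top_n^{\mathrm{grp}}\wedge(\sigma^{\Z})^{\dagger}$ from Corollary~\ref{cor:true-arithmetic-hom}, with $n=\ulcorner\sigma\urcorner$, but now verify its correctness in the finitely presented category. Lemma~\ref{lem:true-arithmetic-to-Z-hom} gives $\N\models\sigma$ iff $\Z\models\sigma^{\Z}$. Corollary~\ref{cor:fp-ring-translation} then gives $\Z\models\sigma^{\Z}$ iff every finitely presented group satisfies $(\sigma^{\Z})^{\dagger}$ under $\fpmodels$. Since $\top_n^{\mathrm{grp}}$ holds in every group, $\sigma$ is true iff $\widetilde\sigma$ lies in the homomorphic modal theory of finitely presented groups. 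The padding makes the map injective on codes, and the totalization convention again yields a genuine one--one reduction.

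With both reductions in place, Myhill's theorem gives a computable isomorphism. The main point needing care is that the reduction from true arithmetic genuinely works within finitely presented groups. Concretely, the infinite-order guard and the cyclic-membership predicate must keep their intended meaning when the modal operators range only over finitely presented targets. Lemmas~\ref{lem:fp-hom-cyclic-membership}, \ref{lem:fp-hom-torsion} and~\ref{lem:fp-ring-with-parameter} establish this, and Corollary~\ref{cor:fp-ring-translation} packages it. The remaining steps are bookkeeping: confirming that the modal theory is taken as a set of sentence codes, so that the totalization convention applies as stated.
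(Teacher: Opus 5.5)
Your proposal is correct and follows the paper's proof essentially verbatim. You apply Myhill's theorem to the padded arithmetization $\widehat\phi$ from Theorem~\ref{thm:fp-hom} in one direction, and to $\widetilde\sigma=\top_n^{\mathrm{grp}}\wedge(\sigma^{\Z})^{\dagger}$, validated in the finitely presented category via Lemma~\ref{lem:true-arithmetic-to-Z-hom} and Corollary~\ref{cor:fp-ring-translation}, in the other, with the totalization convention turning both sentence maps into genuine one--one reductions.
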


\begin{proof}
By Myhill's isomorphism theorem \cite{Myhill55}, it suffices to show that:
\begin{enumerate}[label=(\arabic*)]
    \item the homomorphic modal theory of finitely presented groups is one--one reducible to true arithmetic, and
    \item true arithmetic is one--one reducible to the homomorphic modal theory of finitely presented groups.
\end{enumerate}
For statement~(1), let $\sigma$ be a modal sentence and let $n=\ulcorner\sigma\urcorner$ be its G\"odel code. Write
\[
\top_n^{\mathrm{arith}}:=\underbrace{(0=0)\wedge\cdots\wedge(0=0)}_{n+1\text{ conjuncts}}.
\]
By the final paragraph of the proof of Theorem~\ref{thm:fp-hom}, the map
\[
\sigma\longmapsto \top_n^{\mathrm{arith}}\wedge \forall e\,\bigl(\FPGroupPred(e)\to \sigma^{\dagger}(e)\bigr)
\]
is computable and injective, and its image is arithmetically true exactly when $\sigma$ is valid in all finitely presented groups in the homomorphism semantics. Thus the homomorphic modal theory of finitely presented groups is one--one reducible to true arithmetic.

For statement~(2), let $\sigma$ be a first-order arithmetic sentence and let $n=\ulcorner\sigma\urcorner$. Write
\[
\top_n^{\mathrm{grp}}:=\underbrace{(e=e)\wedge\cdots\wedge(e=e)}_{n+1\text{ conjuncts}}
\]
and put
\[
\widetilde\sigma:=\top_n^{\mathrm{grp}}\wedge (\sigma^{\Z})^{\dagger},
\]
where $\sigma^{\Z}$ is the ring sentence from Lemma~\ref{lem:true-arithmetic-to-Z-hom} and $(\sigma^{\Z})^{\dagger}$ is the translation from Theorem~\ref{thm:ring-of-integers-hom}. The map $\sigma\mapsto \widetilde\sigma$ is computable and injective on G\"odel codes. Moreover,
\[
\begin{aligned}
\langle \N,+,\cdot,0,1\rangle\models \sigma
&\quad\text{iff}\quad
\langle \Z,+,\cdot,0,1\rangle\models \sigma^{\Z}\\
&\quad\text{iff}\quad
(\sigma^{\Z})^{\dagger}\text{ is valid in all finitely presented groups}\\
&\quad\text{iff}\quad
\widetilde\sigma\text{ is valid in all finitely presented groups,}
\end{aligned}
\]
where the second equivalence is Corollary~\ref{cor:fp-ring-translation}, and the last equivalence uses that $\top_n^{\mathrm{grp}}$ is valid in every group. By the totalization convention, true arithmetic is one--one reducible to the homomorphic modal theory of finitely presented groups.

Myhill's theorem now yields the claimed computable isomorphism.
\end{proof}

\subsection{Countable groups}

Throughout this subsection, modal truth is interpreted in the potentialist system of countable groups and homomorphisms between countable groups. Accordingly, if $G$ codes a countable group and $\bar a\in D_G^k$, then
\[
(G,\bar a)\ctblhommodels\varphi
\]
means that $\varphi$ holds at the coded group under the assignment $\bar a$ when the modal operator $\possible$ quantifies only over homomorphisms into coded countable groups.

We code a countable group by a single set $G\subseteq\N$ using tagged pairs, matching the convention in the companion paper. Fix a primitive-recursive pairing function $\langle\cdot,\cdot\rangle$, and write
\[
\begin{aligned}
D_G(x) &\quad\text{if and only if}\quad \langle 0,x\rangle\in G,\\
M_G(x,y,z) &\quad\text{if and only if}\quad \langle 1,\langle x,\langle y,z\rangle\rangle\rangle\in G.
\end{aligned}
\]
Thus $D_G$ is the coded domain and $M_G$ is the coded multiplication graph.

\begin{theorem}\label{thm:countable-hom}
For every countable group-code $G\subseteq\N$, every tuple $\bar a$ of natural numbers from the coded domain $D_G$, and every formula $\phi(\bar x)$ of the modal language of groups with $\possible,\necessary$ interpreted via homomorphisms between countable groups, there is a primitive-recursive second-order arithmetic formula $\phi^{\ddagger}(G,\bar a)$ such that
\[
(G,\bar a)\ctblhommodels \phi
\quad\text{if and only if}\quad
\langle \N,+,\cdot,0,1,\Pow{\N}\rangle\satisfies \phi^{\ddagger}(G,\bar a).
\]
Consequently, the homomorphic modal theory of countable groups under homomorphisms between countable groups is computably interpretable in true second-order arithmetic.
\end{theorem}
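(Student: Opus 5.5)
The translation $\phi\mapsto\phi^{\ddagger}$ will mirror the proof of Theorem~\ref{thm:fp-hom}, with second-order set quantifiers replacing quantification over codes of finite presentations. First, define the arithmetical (indeed $\Pi^0_1$) predicate $\GroupPred(G)$. It says that $M_G$ is the graph of a total binary function on $D_G$, that $D_G$ is nonempty, and that the operation is associative and has an identity and inverses; totality, functionality, associativity and the inverse clause are all first-order conditions on the set parameter $G$. Next, code a homomorphism between coded groups by a set $h\subseteq\N$ whose elements are pairs $\langle x,y\rangle$. The predicate $\HomPred(G,h,G')$ says that $h$ is the graph of a total function $D_G\to D_{G'}$ that respects multiplication:
\[
\forall x\,\forall y\,\forall z\,\bigl(M_G(x,y,z)\to \exists u\,\exists v\,\exists w\,(\langle x,u\rangle\in h\wedge\langle y,v\rangle\in h\wedge\langle z,w\rangle\in h\wedge M_{G'}(u,v,w))\bigr).
\]
This is arithmetical in the parameters $G,h,G'$.

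Define $\ValPred_\phi(G,\bar a)$ by recursion on $\phi$. For atomic formulas, a term $t(\bar x)$ is evaluated by existentially quantifying over witnesses for the finitely many intermediate products, each checked with $M_G$. The identity $e$ is interpreted as the unique $u\in D_G$ with $M_G(u,u,u)$. Boolean connectives are translated directly, and first-order quantifiers are relativized to $D_G$. The modal clause is
\[
\ValPred_{\possible\psi}(G,\bar a):=\exists G'\,\exists h\,\exists\bar a'\,\Bigl(\GroupPred(G')\wedge\HomPred(G,h,G')\wedge\bigwedge_i\langle a_i,a_i'\rangle\in h\wedge\ValPred_\psi(G',\bar a')\Bigr),
\]
where $\exists G'$ and $\exists h$ are set quantifiers. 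The clause for $\necessary$ is the dual universal one. The map is primitive recursive because each clause is built syntactically from the clauses for subformulas.

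Correctness is proved by induction on $\phi$, simultaneously for all codes $G$ and all tuples $\bar a$. The atomic, Boolean and quantifier cases are routine. For the modal case, a witnessing triple $(G',h,\bar a')$ decodes to an actual countable group and homomorphism, and the induction hypothesis applies to $G'$. Conversely, suppose $f\colon G\to H$ is a homomorphism to a countable group $H$. Transport $H$ along a bijection of its domain onto a subset of $\N$ to obtain a code $G'$, and let $h$ be the graph of $f$ composed with this bijection.

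\textbf{Main obstacle.} The delicate point is ensuring that every countable target group is represented by some code, so that the set quantifiers range over all modal alternatives. This holds because the semantics only quantifies over countable groups, and isomorphic codes satisfy the same $\ValPred$ formulas. Unlike the finitely presented case, no representative-independence argument is needed, since elements are named directly by natural numbers.

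Finally, for a sentence $\phi$, the sentence $\forall G\,(\GroupPred(G)\to\phi^{\ddagger}(G))$, padded as in Theorem~\ref{thm:fp-hom} to make the map injective, reduces homomorphic modal validity over countable groups to truth in $\langle\N,+,\cdot,0,1,\Pow{\N}\rangle$.
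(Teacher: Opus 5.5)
Your proposal is correct and follows essentially the same route as the paper. Both use set-codes $G$ with $D_G,M_G$, homomorphisms coded by their graphs, and the same recursive $\ValPred$ clauses with second-order quantifiers over target codes and graphs in the modal steps; both argue by induction on formulas and pad for injectivity. One harmless slip: $\GroupPred(G)$ is not $\Pi^0_1$ in $G$, because totality of $M_G$ and the inverse axiom are $\forall\exists$ conditions, but you only use that it is arithmetical, so nothing depends on this.
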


\begin{proof}
Any countable group $K$ can be coded in this way: choose an injection $i\colon K\to\N$ with $i(1_K)=0$, let $D_G$ be the range of $i$, and let $M_G(i(a),i(b),i(c))$ hold exactly when $ab=c$ in $K$. Conversely, every set $G\subseteq\N$ satisfying the corresponding second-order axioms determines a countable group with domain $D_G$ and multiplication $M_G$.

A second-order formula $\GroupPred(G)$ asserts that $0\in D_G$, that $M_G$ is total and functional on $D_G$, that no multiplication triple uses elements outside $D_G$, and that multiplication satisfies the group axioms with identity $0$ and inverses in $D_G$. Explicitly, the no-garbage clause is
\[
\forall x\forall y\forall z\bigl(M_G(x,y,z)\to D_G(x)\wedge D_G(y)\wedge D_G(z)\bigr),
\]
and totality and functionality are expressed by
\[
\forall x\forall y\bigl(D_G(x)\wedge D_G(y)\to \exists!z\,(D_G(z)\wedge M_G(x,y,z))\bigr).
\]

Given group-codes $G,H\subseteq\N$, a homomorphism from the coded group of $G$ to the coded group of $H$ is coded by a set $F\subseteq\N$, where $\langle x,y\rangle\in F$ means $f(x)=y$. A second-order formula $\HomPred(G,F,H)$ says that $F$ is the graph of a total homomorphism from $D_G$ into $D_H$. Thus $F$ has no graph-theoretic garbage,
\[
\forall x\forall y\bigl(\langle x,y\rangle\in F\to D_G(x)\wedge D_H(y)\bigr),
\]
it is total and functional on $D_G$,
\[
\forall x\bigl(D_G(x)\to \exists!y\,(D_H(y)\wedge \langle x,y\rangle\in F)\bigr),
\]
and it preserves multiplication:
\[
\begin{aligned}
\forall x\forall y\forall z\forall x'\forall y'\forall z'\Bigl(&D_G(x)\wedge D_G(y)\wedge D_G(z)\wedge M_G(x,y,z)\\
&\wedge\langle x,x'\rangle\in F\wedge\langle y,y'\rangle\in F\wedge\langle z,z'\rangle\in F\\
&\to M_H(x',y',z')\Bigr).
\end{aligned}
\]

Before giving the recursive translation, recall that the formal group language is inverse-free. Thus every group term is first rewritten into the language $\{\cdot,e\}$, and then the recursion below is applied. For each group term $t(\bar x)$ we define, by recursion on term complexity, a second-order arithmetic formula $\operatorname{Eval}_{t}(G,\bar x,y)$ asserting that $y$ is the value of $t$ in the group coded by $G$ under the assignment $\bar x\in D_G^k$:
\[
\operatorname{Eval}_{x_i}(G,\bar x,y):=(D_G(y)\wedge y=x_i),\qquad
\operatorname{Eval}_{e}(G,\bar x,y):=(y=0),
\]
and
\[
\operatorname{Eval}_{u\cdot v}(G,\bar x,y):=\exists y_0\exists y_1\bigl(\operatorname{Eval}_{u}(G,\bar x,y_0)\wedge \operatorname{Eval}_{v}(G,\bar x,y_1)\wedge M_G(y_0,y_1,y)\bigr).
\]
The map $t\mapsto\operatorname{Eval}_{t}$ is primitive-recursive on \Godel\ codes of terms.

For each modal formula $\psi(\bar x)$ we now define, by structural induction, a second-order arithmetic formula $\ValPred_{\psi}(G,\bar x)$. For an atomic equation $t_1=t_2$, let
\[
\ValPred_{t_1=t_2}(G,\bar x):=\exists y_1\exists y_2\bigl(\operatorname{Eval}_{t_1}(G,\bar x,y_1)\wedge \operatorname{Eval}_{t_2}(G,\bar x,y_2)\wedge y_1=y_2\bigr).
\]
Boolean connectives are translated directly, and first-order quantifiers are relativized to the coded domain:
\[
\ValPred_{\exists u\,\psi}(G,\bar x):=\exists u\bigl(D_G(u)\wedge \ValPred_{\psi}(G,\bar x,u)\bigr),
\]
\[
\ValPred_{\forall u\,\psi}(G,\bar x):=\forall u\bigl(D_G(u)\to \ValPred_{\psi}(G,\bar x,u)\bigr).
\]
Here the right-hand sides use the extended assignment tuple obtained by appending $u$ to $\bar x$.

For the modal clauses, suppose the free variables of $\psi$ are among $\bar x=(x_1,\dots,x_k)$, and put $\bar y=(y_1,\dots,y_k)$. Define
\[
\begin{aligned}
\ValPred_{\possible\psi}(G,\bar x):={}&\exists H\exists F\exists y_1\cdots\exists y_k\Bigl(\GroupPred(H)\wedge \HomPred(G,F,H)\\
&\wedge \bigwedge_{i=1}^k \langle x_i,y_i\rangle\in F\wedge \ValPred_{\psi}(H,\bar y)\Bigr),
\end{aligned}
\]
and
\[
\begin{aligned}
\ValPred_{\necessary\psi}(G,\bar x):={}&\forall H\forall F\forall y_1\cdots\forall y_k\Bigl((\GroupPred(H)\wedge \HomPred(G,F,H)\\
&\wedge \bigwedge_{i=1}^k \langle x_i,y_i\rangle\in F)\to \ValPred_{\psi}(H,\bar y)\Bigr).
\end{aligned}
\]
Because $\HomPred(G,F,H)$ says that $F$ is the graph of a total homomorphism from $D_G$ into $D_H$, the tuple $\bar y$ is exactly the image tuple of $\bar x$. Every clause in the recursive definition is primitive-recursive on \Godel\ codes, so $\psi\mapsto\ValPred_{\psi}$ is primitive-recursive.

Set $\phi^{\ddagger}(G,\bar a):=\ValPred_{\phi}(G,\bar a)$. An induction on group terms shows that $\operatorname{Eval}_{t}(G,\bar a,b)$ holds exactly when $b$ is the value of $t$ in the countable group coded by $G$ under the assignment $\bar a\in D_G^k$. Using this, a second induction on the complexity of $\phi$ yields the desired equivalence. The atomic and Boolean steps are immediate. The quantifier step uses the relativization to $D_G$. In the modal step, witnesses $H,F,\bar y$ decode to an actual homomorphism of coded countable groups carrying $\bar a$ to its image tuple, and conversely any witnessing homomorphism in the category of countable groups and homomorphisms yields suitable second-order parameters.

If $\phi$ is a sentence, define
\[
\widehat\phi:=\forall G\,(\GroupPred(G)\to \phi^{\ddagger}(G)).
\]
Then $\phi$ is valid in all countable groups under homomorphisms if and only if
\[
\langle\N,+,\cdot,0,1,\Pow{\N}\rangle\satisfies\widehat\phi.
\]
To obtain a one--one reduction, fix a computable family $(\eta_n)_{n\in\N}$ of pairwise syntactically distinct true second-order arithmetic sentences; for definiteness, let $\eta_n$ be the right-associated conjunction of $n+1$ copies of the true atomic sentence $0=0$. Define
\[
\widetilde{\widehat\phi}:=\eta_{\operatorname{Code}(\phi)}\wedge \widehat\phi.
\]
Since each $\eta_n$ is true, this preserves truth. The map $\phi\mapsto\widetilde{\widehat\phi}$ is primitive-recursive and injective on \Godel\ codes of sentences because the left conjunct records the code of $\phi$ inside the fixed conjunction template. By the totalization convention above, this sentence map induces a total one--one reduction of modal validity in the category of countable groups and homomorphisms to truth in second-order arithmetic.
\end{proof}

\begin{corollary}\label{cor:countable-hom-soa}
The homomorphic modal theory of countable groups under homomorphisms between countable groups is one--one reducible to true second-order arithmetic.
\end{corollary}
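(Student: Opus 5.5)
This corollary follows by reading off the final paragraph of the proof of Theorem~\ref{thm:countable-hom} through the totalization convention. I would first fix the sentence translation. For each modal sentence $\phi$ of $\LgrpModal$, Theorem~\ref{thm:countable-hom} supplies the primitive-recursive second-order sentence
\[
\widehat\phi=\forall G\,(\GroupPred(G)\to\phi^{\ddagger}(G)),
\]
which is true in $\langle\N,+,\cdot,0,1,\Pow{\N}\rangle$ exactly when $\phi$ holds at every countable group under the homomorphism semantics. Before invoking this, I would check that every countable group is isomorphic to one coded by some $G$. I would also check that the modal semantics is invariant under isomorphism of coded groups. Then ``valid in all countable groups'' and ``$\GroupPred(G)\to\phi^{\ddagger}(G)$ for all codes $G$'' coincide. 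Both facts were already used in the proof of the theorem.

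Next, I would make the map injective on Gödel codes. The padded sentence $\widetilde{\widehat\phi}=\eta_{\operatorname{Code}(\phi)}\wedge\widehat\phi$ records $\operatorname{Code}(\phi)$ syntactically in its left conjunct, because the $\eta_n$ are pairwise distinct. Since each $\eta_n$ is true, truth is preserved. Hence $\phi\mapsto\widetilde{\widehat\phi}$ is a computable translation that is injective on sentence codes and truth-preserving.

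Finally, I would apply the totalization convention with $\tau(\phi)=\widetilde{\widehat\phi}$. This produces a total injective computable $F_\tau\colon\N\to\N$. Codes of modal sentences go to the tagged translations. Non-sentence codes go to the contradictions $\delta_n$, which are false in second-order arithmetic; these are outside both theories, which is exactly what the reduction needs. This yields the one--one reduction.

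The only point needing care is the convention's use of ``target-language'' sentences $\exists z\,z=z$ and $\forall z\,z\neq z$. I would read these as first-order sentences of the second-order arithmetic language. One must then confirm that the outer form of the $\delta_n$ is disjoint from the image of $\tau^\sharp$. This holds because the first conjuncts differ syntactically, so I expect no real obstacle.
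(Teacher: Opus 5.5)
Your proposal is correct and is essentially the paper's own argument. The corollary is the one--one reduction obtained by applying the totalization convention to the injective, truth-preserving sentence map $\phi\mapsto\widetilde{\widehat\phi}$ built at the end of the proof of Theorem~\ref{thm:countable-hom}; your extra checks (every countable group is coded, invariance under isomorphism, disjointness of the $\delta_n$ from the tagged image) only make explicit what the paper leaves implicit.
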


\begin{proof}
This is exactly the one--one reduction induced, by the totalization convention above, by the injective sentence map $\phi\mapsto\widetilde{\widehat\phi}$ constructed at the end of the proof of Theorem~\ref{thm:countable-hom}.
\end{proof}

\section{Propositional modal validities of groups}\label{section:validities}

We now compute the modal validities of groups under arbitrary homomorphisms. For modal validities under embeddings, and for historical context concerning the question of whether all groups validate $\SFourTwo$, see the companion embedding paper and the earlier work of Berger, Block, and L\"owe on abelian groups \cite{WoloszynEmbedding,BBL23}.

For this section, a \emph{substitution language} $\mathcal L$ means a nonempty class of formulas with $\Lgrp\subseteq\mathcal L\subseteq\LgrpModal$, closed under Boolean connectives. I write $\Val_{\GrpHom}(G,\mathcal L)$ for sentential, no-parameter substitutions, and $\Val_{\GrpHom}(G,\mathcal L_A)$ when parameters from $A\subseteq G$ are allowed and transported along homomorphisms.

If $A\subseteq G$, I say that homomorphisms out of $G$ are \emph{$A$-amalgamable} if, whenever we have homomorphisms
\[
G\xrightarrow{\,f\,}H,
\qquad
K_0\xleftarrow{\,f_0\,}H\xrightarrow{\,f_1\,}K_1,
\]
there are a group $P$ and homomorphisms $g_0\colon K_0\to P$ and $g_1\colon K_1\to P$ such that
\[
(g_0\circ f_0\circ f)(a)=(g_1\circ f_1\circ f)(a)
\]
for every $a\in A$. Thus the two continuations can be amalgamated over the chosen parameters.

\begin{definition}\label{def:buttons-dials-hom}
Let $G$ be a group.
\begin{enumerate}[label=(\arabic*)]
    \item A statement $b$ is a \emph{button} at $G$ if $G\hommodels \necessary\possible\necessary b$. It is \emph{pushed} if already $G\hommodels\necessary b$.
    \item A button $b$ at $G$ is \emph{pure} if its truth is persistent: whenever $b$ holds after a homomorphism out of $G$, it continues to hold after all further homomorphisms.
    \item A finite list $d_0,\dots,d_{n-1}$ is a \emph{dial} at $G$ if necessarily exactly one $d_i$ holds, and for every homomorphism $h\colon G\to H$ each dial value can be realized by some further homomorphism $k\colon H\to K$.
    \item A finite family of buttons is \emph{independent} at $G$ if, after every homomorphism $h\colon G\to H$ out of $G$, any chosen subfamily of the buttons not yet pushed at $H$ can be pushed by a further homomorphism out of $H$ without affecting the status of the others. A dial is \emph{independent} of a family of buttons if, from every such homomorphism, its value can be changed arbitrarily by a further homomorphism without changing which of those buttons are pushed.
\end{enumerate}
\end{definition}

The following proposition collects the standard lower-bound and upper-bound tools for modal model theory; see Hamkins--Woloszyn~\cite[Sections~2 and~5]{HW24} and the categorical exposition for sets~\cite{WSet}. In upper-bound applications $\mathcal L_A$ is required to contain the relevant control statements.

\begin{proposition}\label{prop:general-tools}
Let $G$ be a group, let $A\subseteq G$, and let $\mathcal L$ be a substitution language.
\begin{enumerate}[label=(\arabic*)]
    \item If homomorphisms out of $G$ are $A$-amalgamable, then
    \[
    \SFourTwo\subseteq\Val_{\GrpHom}(G,\mathcal L_A).
    \]
    \item If $G$ admits arbitrarily long finite dials and $\mathcal L_A$ contains the corresponding dial statements, then $\Val_{\GrpHom}(G,\mathcal L_A)\subseteq \SFive$.
    \item $\SFive\subseteq \Val_{\GrpHom}(G,\mathcal L)$ for sentential substitutions.
    \item For the trivial group $\{e\}$, $\SFive\subseteq \Val_{\GrpHom}(\{e\},\mathcal L_{\{e\}})$.
    \item If $G$ admits arbitrarily large finite independent families of buttons together with arbitrarily long finite dials independent of those buttons, and $\mathcal L_A$ contains the corresponding button and dial statements, then $\Val_{\GrpHom}(G,\mathcal L_A)\subseteq \SFourTwo$.
\end{enumerate}
\end{proposition}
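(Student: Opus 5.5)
We prove the five items separately. Each one reduces to a standard Kripke-frame fact once we record the relevant structural feature of the homomorphism category.

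\emph{Item (1).} Reflexivity and transitivity are automatic, since identities are homomorphisms and homomorphisms compose. This gives axioms T and 4 for every substitution instance, and K holds because $\necessary$ is a universal quantifier over accessible worlds. For the directedness axiom $\possible\necessary\varphi\to\necessary\possible\varphi$, fix $f\colon G\to H$ and suppose $H\hommodels\possible\necessary\varphi[f(\bar a)]$, witnessed by $f_0\colon H\to K_0$. Given any other $f_1\colon H\to K_1$, use $A$-amalgamability to get $g_0,g_1$ into a common $P$ that agree on the image of $A$. Then $P\hommodels\varphi$ at the transported parameters: $\necessary\varphi$ holds at $K_0$, so $\varphi$ holds at $P$ via $g_0$. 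The parameter tuples coming from the two paths coincide, so $K_1\hommodels\possible\varphi$ via $g_1$. Because formulas only see parameters through their images, this verifies \SFourTwo{} over $\mathcal L_A$.

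\emph{Items (2) and (5).} These are the Hamkins--Wo\l oszyn control-statement theorems~\cite{HW24}, which are purely frame-theoretic. Long dials let us assign propositional variables so that any finite \SFive-countermodel, a finite cluster, is simulated. Independent buttons together with an independent dial simulate any finite pre-Boolean-algebra frame, and these characterize \SFourTwo. We only need to check that the abstract hypotheses match Definition~\ref{def:buttons-dials-hom}, so we cite~\cite{HW24,WSet}.

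\emph{Item (3).} For a sentence $\sigma$, the truth of $\possible\sigma$ does not depend on the base group, as in the proof of Theorem~\ref{thm:ring-of-integers-hom}. Every group maps to every other group by the trivial homomorphism, so the accessibility relation on sentential truth is total. Concretely, if $G\hommodels\possible\necessary\sigma$ via $G\to H$, then for any $G\to K$ we have $K\to H$ trivially, so $K\hommodels\possible\sigma$; moreover $\necessary\sigma$ at $H$ gives $\sigma$ at every target. This yields axiom 5, $\possible\necessary\sigma\to\necessary\sigma$, for sentences. The one point to check is that truth for the trivial homomorphism does not need to match the original map, since sentences have no parameters. Together with T and 4 this gives \SFive.

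\emph{Item (4).} The parameters from $\{e\}$ map to identities, which are named by the constant $e$. Hence parameter substitutions at the trivial group are equivalent to sentential ones, and item (3) applies. The main obstacle is item (1): we must confirm that the modal semantics of each substitution instance depends only on the image tuple of $A$, so that agreement on $A$ after amalgamation suffices. This is an induction on formulas using the fact that modal clauses transport assignments along composites.
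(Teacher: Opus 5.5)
Your proposal is correct and essentially follows the paper. Items (2) and (5) rest on the same cited control-statement theorems, and (3) uses the same key fact that the trivial homomorphism makes every group accessible from every other; you derive $\possible\necessary\sigma\to\necessary\sigma$ via the trivial map $H\to K$ (your detour through $K\to H$ is not needed), while the paper derives the \theoryf{S4}-equivalent principle $\possible\necessary\varphi\to\varphi$ via the trivial map $H\to G$. The only other differences are that for (1) you spell out the standard directedness argument the paper merely cites, and for (4) you reduce to (3) because the sole parameter is named by the constant $e$ (an observation the paper saves for the upper bound in Theorem~\ref{thm:hom-trivial-s5}), whereas the paper applies $\necessary\varphi$ along the retraction $r\colon H\to\{e\}$ with $r\circ f=\mathrm{id}_{\{e\}}$.
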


\begin{proof}
The lower-bound and upper-bound statements are the standard control-statement theorems cited above. For (3), suppose $G\hommodels \possible\necessary\varphi$, witnessed by a homomorphism $f\colon G\to H$ with $H\hommodels \necessary\varphi$. There is always a homomorphism $r\colon H\to G$, namely the trivial homomorphism. Because $H\hommodels \necessary\varphi$, applying necessity along $r$ gives $G\hommodels\varphi$. Thus $G$ validates the axiom $5$ for sentential substitutions, and therefore validates $\SFive$.

For (4), suppose $\{e\}\hommodels \possible\necessary\varphi[e]$, witnessed by the unique homomorphism $f\colon \{e\}\to H$ with $H\hommodels \necessary\varphi[f(e)]$. There is a unique homomorphism $r\colon H\to\{e\}$, and $r\circ f=\mathrm{id}_{\{e\}}$. Applying $\necessary\varphi$ along $r$, we obtain $\{e\}\hommodels\varphi[e]$. Hence the trivial group validates axiom $5$ with its only parameter, and therefore validates $\SFive$.
\end{proof}

\subsection{Uniform lower and upper bounds}

\begin{theorem}\label{thm:hom-s42-lower}
For every group $G$, every substitution language $\mathcal L$ of group-modal formulas with
\[
\Lgrp\subseteq \mathcal L\subseteq \LgrpM,
\]
and every parameter set $A\subseteq G$,
\[
\SFourTwo\subseteq \Val_{\GrpHom}(G,\mathcal L_A).
\]
\end{theorem}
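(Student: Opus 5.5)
The plan is to apply Proposition~\ref{prop:general-tools}(1). That reduces the theorem to showing that homomorphisms out of $G$ are $A$-amalgamable, for an arbitrary $A\subseteq G$. Once amalgamability is established, the standard control-statement argument gives the validity of $\SFourTwo$ for every substitution language $\mathcal L$ and every parameter set. Recall that $\mathrm{S4}$ (axioms K, T, 4) is automatic here: identities are homomorphisms and homomorphisms compose, so accessibility is reflexive and transitive. So the real content is the $.2$ axiom
\[
\possible\necessary\varphi\to\necessary\possible\varphi,
\]
and that is precisely what amalgamability over the parameters secures.

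The amalgamation itself is trivial in this category, because the trivial group is a terminal object. Given $G\xrightarrow{f}H$ and $K_0\xleftarrow{f_0}H\xrightarrow{f_1}K_1$, take $P=\{e\}$ and let $g_0,g_1$ be the trivial homomorphisms. Then
\[
(g_0\circ f_0\circ f)(a)=e=(g_1\circ f_1\circ f)(a)
\]
for every $a\in A$. So homomorphisms out of every group are $A$-amalgamable for every $A\subseteq G$, with no restriction on $A$.

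For safety I would also spell out directly why the $.2$ axiom holds with parameters, rather than relying only on the citation. Suppose $H$ is reached from $G$ and $H\hommodels\possible\necessary\varphi[\bar a']$, witnessed by $f_0\colon H\to K_0$ with $K_0\hommodels\necessary\varphi$ at the transported parameters. Let $f_1\colon H\to K_1$ be arbitrary. We want $K_1\hommodels\possible\varphi$ at its parameters.

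1. Map $K_1$ to $P=\{e\}$ by the trivial homomorphism $g_1$.
2. Map $K_0$ to $\{e\}$ by the trivial homomorphism $g_0$. Since $K_0\hommodels\necessary\varphi$, we get $\{e\}\hommodels\varphi$ at the tuple $g_0f_0(\bar a')$.
3. Every parameter becomes $e$ under both composites, so the two tuples agree. Hence $\{e\}\hommodels\varphi$ at $g_1f_1(\bar a')$, which gives $K_1\hommodels\possible\varphi$.

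The only delicate point, which is the one I would check most carefully, is that transporting parameters respects this identification. Truth at a target depends only on the target group and the image tuple, so agreement of the images is all that is needed.
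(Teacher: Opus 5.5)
Your proof is correct. It uses the same reduction as the paper, namely Proposition~\ref{prop:general-tools}(1) via $A$-amalgamability, but with a different amalgam.

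\begin{itemize}
\item The paper takes the pushout: the quotient of $K_0*K_1$ by the normal closure of the relations $f_0(h)=f_1(h)$ for $h\in H$. This makes $j_0\circ f_0=j_1\circ f_1$ on all of $H$.
\item You take $P=\{e\}$, using that the trivial group is terminal. Both composites are then the trivial map out of $G$, so they agree on every parameter, and indeed on all of $G$.
\end{itemize}

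Your route needs no group-theoretic construction. It also shows why the $.2$ axiom is forced: a single world, the trivial group with every parameter sent to $e$, is accessible from every world. Directedness is therefore automatic, uniformly in $A$. The same argument works verbatim in any full subcategory containing the trivial group, such as the finitely presented or countable groups used in Section~\ref{section:complexity}.

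The paper's pushout makes the entire square commute, not just the parameters. It is also the construction that survives where no terminal object is available. For example, in the embedding semantics an amalgamated free product plays this role, and collapsing to $\{e\}$ is not an admissible move there. For the statement as posed, your simpler amalgam is enough.
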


\begin{proof}
Let $A$ be the chosen parameter set. We show that homomorphisms out of $G$ are $A$-amalgamable. Let $f\colon G\to H$ be a homomorphism, and let
\[
K_0 \xleftarrow{f_0} H \xrightarrow{f_1} K_1
\]
be a span of group homomorphisms rooted at $H$. Form the free product $K_0*K_1$ and quotient by the normal closure of all relations
\[
f_0(h)=f_1(h)\qquad(h\in H).
\]
Denote the quotient by $P$, and let $j_0\colon K_0\to P$ and $j_1\colon K_1\to P$ be the canonical homomorphisms. Then
\[
j_0\circ f_0=j_1\circ f_1
\]
on all of $H$, and therefore
\[
(j_0\circ f_0\circ f)(a)=(j_1\circ f_1\circ f)(a)
\]
for every $a\in A$. Thus homomorphisms out of $G$ are $A$-amalgamable. Proposition~\ref{prop:general-tools}(1) yields the result.
\end{proof}

\begin{theorem}\label{thm:hom-s5-upper}
For every group $G$ and every substitution language $\mathcal L$ of group-modal formulas with
\[
\Lgrp\subseteq \mathcal L\subseteq \LgrpM,
\]
we have
\[
\Val_{\GrpHom}(G,\mathcal L)\subseteq \SFive.
\]
In particular, the sentential modal validities of every group are contained in $\SFive$.
\end{theorem}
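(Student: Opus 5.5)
We apply Proposition~\ref{prop:general-tools}(2) with no parameters, which requires arbitrarily long finite dials at $G$ expressed by $\Lgrp$-sentences. For each $n\geq 1$ we take the dial values
\[
d_i:=\text{``the group has exactly } i+1 \text{ elements''}\quad(0\leq i<n-1),\qquad d_{n-1}:=\text{``the group has at least } n \text{ elements''}.
\]
Each $d_i$ is a first-order sentence of $\Lgrp$ with no parameters, so $d_0,\dots,d_{n-1}\in\mathcal L$ because $\Lgrp\subseteq\mathcal L$.

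It remains to check the dial conditions of Definition~\ref{def:buttons-dials-hom}(3). The $d_i$ are pairwise exclusive and jointly exhaustive in every group, so in every homomorphic target of $G$ exactly one $d_i$ holds; hence $\necessary$ of ``exactly one $d_i$'' holds at $G$. For realizability, let $h\colon G\to H$ be any homomorphism and fix $i<n$. Take $K=\Z/(i+1)\Z$ when $i<n-1$, and $K=\Z/n\Z$ when $i=n-1$. The trivial homomorphism $H\to K$ is a homomorphism, and $K$ satisfies $d_i$. Thus every dial value can be reached from every homomorphic image of $G$.

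These are exactly the finite-cyclic-group dials described in the introduction, so $G$ admits dials of every finite length $n$ whose statements lie in $\mathcal L$. Proposition~\ref{prop:general-tools}(2) then gives $\Val_{\GrpHom}(G,\mathcal L)\subseteq\SFive$. There is no real obstacle. The one point to check is that the sentential (parameter-free) case falls under Proposition~\ref{prop:general-tools}(2) with $A=\emptyset$, and that the dial statements are genuinely sentences, so no parameter transport is needed. The final clause of the theorem is the special case $\mathcal L=\LgrpM$, or any $\mathcal L$, since the statement is already sentential.
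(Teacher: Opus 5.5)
Your proposal is correct and is essentially the paper's proof. You use the same group-size dials (exactly $1,\dots,n-1$ elements, or at least $n$), reach each value from any $H$ by the trivial homomorphism into a finite cyclic group, and apply Proposition~\ref{prop:general-tools}(2) with empty parameter set.
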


\begin{proof}
We exhibit arbitrarily long finite dials. Fix $N\geq 1$. For $1\leq n<N$, let $d_n$ be the sentence asserting that the group has size exactly $n$, and let $d_{\geq N}$ assert that the group has size at least $N$. Exactly one of
\[
d_1,\dots,d_{N-1},d_{\geq N}
\]
holds in any group.

Now let $H$ be any group. For each $1\leq n<N$ there exists a group of size exactly $n$, for instance the cyclic group $C_n$, and there exists a group of size at least $N$, for instance $C_N$ or $\Z$. Since for any groups $H$ and $K$ there is a homomorphism $H\to K$ (the trivial homomorphism), every dial value is reachable from $H$. Thus the displayed sentences form an $N$-dial at every group. By Proposition~\ref{prop:general-tools}(2), applied with parameter set $A=\varnothing$, the sentential validities are contained in $\SFive$.
\end{proof}

\begin{theorem}\label{thm:hom-s5-sentential}
For every group $G$, the propositional modal validities with respect to sentential substitution instances from any substitution language $\mathcal L$ of group-modal formulas with
\[
\Lgrp\subseteq \mathcal L\subseteq \LgrpM
\]
are exactly $\SFive$.
\end{theorem}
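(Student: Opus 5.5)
The statement is the conjunction of a lower bound and an upper bound, and both are already available, so the plan is to combine them. For the lower bound I will invoke Proposition~\ref{prop:general-tools}(3). That clause shows directly that every group validates axiom $5$ under sentential substitution instances. The mechanism is that $\possible\necessary\varphi$ at $G$, witnessed by $f\colon G\to H$, pulls back to $\varphi$ at $G$ along the trivial homomorphism $r\colon H\to G$. Together with $\SFourTwo$ from Theorem~\ref{thm:hom-s42-lower} (taken with $A=\varnothing$), which supplies $\mathrm K$, $\mathrm T$ and $4$, this gives $\SFive\subseteq\Val_{\GrpHom}(G,\mathcal L)$.

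I will make one check explicit: that the argument in Proposition~\ref{prop:general-tools}(3) is insensitive to the choice of $\mathcal L$. This holds because it uses only that substitution instances are sentences, so their truth is transported along any homomorphism without needing to track parameters. It also holds because $\mathcal L$ is closed under Boolean connectives, so substitution instances of $\SFive$ axioms stay in the language.

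For the upper bound I will cite Theorem~\ref{thm:hom-s5-upper}, which gives $\Val_{\GrpHom}(G,\mathcal L)\subseteq\SFive$ for every such $\mathcal L$. The only point to confirm is that the dial statements $d_1,\dots,d_{N-1},d_{\geq N}$ belong to $\mathcal L$. They are parameter-free first-order sentences, hence in $\Lgrp\subseteq\mathcal L$, so Proposition~\ref{prop:general-tools}(2) applies.

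Combining the two inclusions yields equality with $\SFive$. I do not expect a real obstacle here. The only delicate point is the bookkeeping just mentioned: confirming that both the axiom-$5$ argument and the dial argument are stated for an arbitrary substitution language between $\Lgrp$ and $\LgrpM$, rather than only for $\Lgrp$.
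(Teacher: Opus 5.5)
Your proposal is correct and follows the paper's own argument. The lower bound comes from Proposition~\ref{prop:general-tools}(3) via the trivial homomorphism back into $G$, and the upper bound is Theorem~\ref{thm:hom-s5-upper}, whose dial sentences lie in $\Lgrp\subseteq\mathcal L$. Your extra appeal to Theorem~\ref{thm:hom-s42-lower} for $\mathrm K$, $\mathrm T$ and $4$ just makes explicit what the paper folds into clause~(3). One minor quibble: substitution instances of the axioms lie in $\LgrpM$, not necessarily in $\mathcal L$, but the argument needs nothing more.
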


\begin{proof}
For every group $H$ there exists a homomorphism $H\to G$, namely the trivial homomorphism. Hence Proposition~\ref{prop:general-tools}(3) yields the lower bound $\SFive$ for sentential substitutions. The opposite inclusion is Theorem~\ref{thm:hom-s5-upper}.
\end{proof}

\begin{remark}\label{rem:nontrivial-not-s5}
If $G$ is nontrivial and $a\in G$ is nonidentity, then the formula $a=e$ is an unpushed button at $G$. Indeed, $a=e$ is false in $G$, but one may push it by applying the quotient map to $G/\gen{a}^{\normalfont G}$, or more simply by using the trivial homomorphism from $G$ to the trivial group. Consequently no nontrivial group has parameter-validities exactly $\SFive$.
\end{remark}

\begin{theorem}\label{thm:hom-trivial-s5}
Let $\{e\}$ be the trivial group. For every substitution language $\mathcal L$ of group-modal formulas with
\[
\Lgrp\subseteq \mathcal L\subseteq \LgrpM,
\]
we have
\[
\Val_{\GrpHom}(\{e\},\mathcal L_{\{e\}})=\SFive.
\]
\end{theorem}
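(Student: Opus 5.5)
The equality has two inclusions, and I will take them separately.

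For the lower bound $\SFive\subseteq\Val_{\GrpHom}(\{e\},\mathcal L_{\{e\}})$, I will invoke Proposition~\ref{prop:general-tools}(4) directly. Its proof already checks axiom $5$ with the only available parameter. The key point is that the unique homomorphism $r\colon H\to\{e\}$ retracts the unique map $f\colon\{e\}\to H$. So if $H\hommodels\necessary\varphi[f(e)]$, transporting along $r$ gives $\{e\}\hommodels\varphi[r(f(e))]=\varphi[e]$.

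I will add two remarks. First, the remaining $\SFourTwo$ part, namely axioms K, T, 4 and .2, comes from Theorem~\ref{thm:hom-s42-lower}; in fact S4 together with 5 suffices. Second, axiom $5$ must hold at every world reachable from $\{e\}$, not just at $\{e\}$. I expect this to be the only delicate point. At an arbitrary world $H$ the parameter is the identity $e_H$, and $H$ maps to $\{e\}$ and back, sending $e_H\mapsto e\mapsto e_H$. So the same retraction argument works at every accessible world, because the parameter tuple is always transported to the identity. For this reason the validity of $5$ transfers to all worlds reachable from the trivial group.

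For the upper bound $\Val_{\GrpHom}(\{e\},\mathcal L_{\{e\}})\subseteq\SFive$, I will apply Proposition~\ref{prop:general-tools}(2) with $A=\{e\}$. Theorem~\ref{thm:hom-s5-upper} already exhibits arbitrarily long finite dials built from sentences $d_1,\dots,d_{N-1},d_{\geq N}$ about the size of the group. These are parameter-free, so they lie in $\mathcal L\subseteq\mathcal L_{\{e\}}$, since $\Lgrp\subseteq\mathcal L$. They are reachable from any group via trivial homomorphisms into $C_n$ or $\Z$. Combining the two inclusions gives equality.
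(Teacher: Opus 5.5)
Your proposal is correct and follows the paper's proof. The lower bound is Proposition~\ref{prop:general-tools}(4), and the upper bound comes from the size dials of Theorem~\ref{thm:hom-s5-upper}; you feed these dials directly into Proposition~\ref{prop:general-tools}(2) with $A=\{e\}$, whereas the paper notes that the only parameter is already named by the constant $e$ and so reduces to the sentential case. Your extra check that axiom $5$ holds at every world accessible from $\{e\}$, where the transported parameter is always the identity, is a worthwhile detail that the paper's proof of Proposition~\ref{prop:general-tools}(4) leaves implicit.
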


\begin{proof}
Proposition~\ref{prop:general-tools}(4) yields the lower bound $\SFive$ for the trivial group. For the upper bound, observe that the only parameter from the trivial group is its identity element, which is already named by the group-language constant $e$. Thus allowing parameters from $\{e\}$ adds no new substitution instances beyond the sentential language $\mathcal L$, and Theorem~\ref{thm:hom-s5-upper} yields
\[
\Val_{\GrpHom}(\{e\},\mathcal L_{\{e\}})\subseteq \SFive.
\]
\end{proof}

\subsection{The exact formulaic modal theory}

To produce exact $\SFourTwo$ validities with parameters, we use divisibility buttons and center-size dials.

\begin{definition}
Let $p$ be a prime.
\begin{enumerate}[label=(\arabic*)]
    \item An element $x$ of a group is \emph{$p$-divisible} if there exists $y$ with $y^p=x$.
    \item An element $x$ is \emph{$p$-indivisible} if it is not $p$-divisible.
    \item A group $G$ is \emph{uniformly prime-indivisible} if for every finite set $S$ of primes there exists an element $a\in G$ of infinite order such that $a$ is $p$-indivisible in $G$ for every $p\in S$.
\end{enumerate}
\end{definition}

We shall use the following standard notation for amalgamated free products. Let $G$ and $A$ be groups and let $C$ be a group with injective homomorphisms $\iota_G\colon C\hookrightarrow G$ and $\iota_A\colon C\hookrightarrow A$. We view the amalgamated free product $H=G*_C A$ as the quotient of the free product $G*A$ by the normal closure of the relations $\iota_G(c)=\iota_A(c)$ for all $c\in C$. Via the injections we identify $C$ with its images in $G$ and $A$ and regard $G,A,C$ as subgroups of $H$.

Recall that a word $x_1x_2\cdots x_n$ with each $x_i\in G\cup A$ is \emph{reduced} if:
\begin{enumerate}[label=(\arabic*)]
    \item $x_i\neq 1$ for all $i$,
    \item consecutive syllables lie in different factors, and
    \item if $n\geq 2$, then $x_i\notin C$ for all $i$.
\end{enumerate}
The syllable length of such a word is $n$. Under the injectivity hypothesis, every element of $H=G*_C A$ admits a reduced representative, and if a reduced word has syllable length at least $2$, then it represents an element of $H$ lying in neither $G$ nor $A$.

\begin{lemma}\label{lem:central-conjugacy}
Suppose that $C$ is central in $A$. Let $c,c'\in C$. If there exists $h\in H=G*_C A$ such that
\[
h^{-1}ch=c',
\]
then there exists $g\in G$ such that
\[
g^{-1}cg=c'.
\]
\end{lemma}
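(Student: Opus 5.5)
Since $C$ is central in $A$, every element of $A$ fixes $C$ pointwise under conjugation. The plan is to write $h$ as a reduced word and peel off syllables, carrying a conjugate of $c$ along, until we reach an element of $G$ that does the same job. Equivalently, let $\pi$ be the map from $H$ to the set of conjugates of $c$, and show that the $H$-conjugates of $c$ lying in $C$ are exactly its $G$-conjugates lying in $C$.

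Write $h=x_1x_2\cdots x_n$ as a reduced word. If $n\le 1$, there are two cases. If $h\in G$, take $g=h$. If $h\in A$, then $h^{-1}ch=c$ by centrality, so $c'=c$ and $g=1$ works.

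For $n\ge 2$, induct on $n$. Put $c_0=c$ and $c_i=x_i^{-1}c_{i-1}x_i$, so that $c_n=c'\in C$. The key step is to show that every $c_i$ lies in $C$, which lets the $A$-syllables act trivially.

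We argue inductively from the left. Suppose $c_{i-1}\in C$. If $x_i\in A$, then $c_i=c_{i-1}\in C$. If $x_i\in G$ and $x_i^{-1}c_{i-1}x_i\notin C$, we must derive a contradiction. The remaining word gives
\[
x_{i+1}^{-1}\cdots x_n^{-1}\,(x_i^{-1}c_{i-1}x_i)\,x_n^{-1}\cdots
\]
Rather than chase this directly, it is cleaner to use the equation $c'=h^{-1}ch$, rewritten as
\[
x_n^{-1}\cdots x_1^{-1}\,c\,x_1\cdots x_n\,c'^{-1}=1.
\]

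Absorb the innermost letters. Let $k$ be the first index with $c_k\notin C$; then $c_{k-1}\in C$ and $x_k\in G$. The element $c_k$ is a nontrivial element of $G\setminus C$. In the word
\[
x_n^{-1}\cdots x_{k+1}^{-1}\,c_k\,x_{k+1}\cdots x_n\,c'^{-1},
\]
the letters $x_{k+1}$ lie in $A\setminus C$, since reduced words alternate factors. So this word, after merging $x_n c'^{-1}$ into a single syllable, has syllable length $2(n-k)+1\ge 3$ when $k<n$. In fact the merged syllable stays in the same factor and outside $C$, since $c'\in C$. Hence the word is reduced, and it represents an element that is nontrivial, contradicting the equation. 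If $k=n$, then $c_n=c'\in C$, contrary to the choice of $k$. This shows that all $c_i\in C$.

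Finally, let $g$ be the product, in order, of the $G$-syllables of $h$. The $A$-syllables act trivially on each $c_i\in C$, so $g^{-1}cg=c_n=c'$.

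The main obstacle is the reducedness bookkeeping in the contradiction step. One point needs care: the merged last syllable $x_nc'^{-1}$ could fall into $C$ only if $x_n\in C$, which reducedness excludes. The case $k=n$ is handled separately, as above.
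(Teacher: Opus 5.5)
Your proof is correct and takes essentially the same approach as the paper. Both write $h$ as a reduced word, follow the conjugation chain $c_i$, and use the normal form to show every $c_i$ stays in $C$; your first-bad-index argument, with $c'^{-1}$ merged into the last syllable, is equivalent to the paper's observation that $v^{-1}dv$ is reduced of length at least $3$ and so cannot lie in $C$. Both then use centrality to discard the $A$-syllables, and the only blemishes are cosmetic: the announced induction on $n$ is never used, and your first displayed ``remaining word'' has its letters in the wrong order before you replace it with the correct one.
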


\begin{proof}
If $h=1$, then $c'=c$ and we may take $g=1\in G$. So assume $h\neq 1$, and choose a reduced representative
\[
h=x_1x_2\cdots x_n
\]
with $n\geq 1$.

For $0\leq i\leq n$ define
\[
c_0=c,\qquad c_i=(x_1\cdots x_i)^{-1}c(x_1\cdots x_i).
\]
Then $c_n=h^{-1}ch=c'\in C$. For each $m\in\{1,\dots,n\}$ set
\[
s_m=x_mx_{m+1}\cdots x_n,
\]
so that $h=(x_1\cdots x_{m-1})s_m$ and hence
\[
c_n=s_m^{-1}c_{m-1}s_m.
\]
We prove by induction on $m$ that $c_{m-1}\in C$ (and hence also $c_m\in C$). The base case $m=1$ is clear. Assume $c_{m-1}\in C$. Write $s_m=y_1\cdots y_r$ with $y_1=x_m$, and set
\[
d=y_1^{-1}c_{m-1}y_1=c_m.
\]
If $r=1$, then $s_m=y_1$ and $d=s_m^{-1}c_{m-1}s_m=c_n\in C$, so $c_m\in C$. Suppose $r\geq 2$ and $d\notin C$. Let $v=y_2\cdots y_r$. Then
\[
s_m^{-1}c_{m-1}s_m=v^{-1}dv.
\]
We claim that $v^{-1}dv$ is reduced. Since $s_m$ is reduced of length at least $2$, we have $y_2,\dots,y_r\notin C$, hence $y_i^{\pm1}\notin C$. By assumption $d\notin C$. Because $s_m$ is reduced, $y_2$ lies in the factor opposite to $y_1$. Since $c_{m-1}\in C\subseteq G\cap A$, the element $d=y_1^{-1}c_{m-1}y_1$ lies in the same factor as $y_1$, so $y_2$ and $y_2^{-1}$ lie in the factor opposite to $d$. Therefore no reduction occurs at the junctions $y_2^{-1}d$ and $dy_2$, and elsewhere reducedness comes from the reducedness of $v$ and $v^{-1}$. Hence $v^{-1}dv$ is reduced of syllable length $2r-1\geq 3$, so by the normal form theorem it cannot represent an element of $C$, contrary to $v^{-1}dv=c_n\in C$. Thus $d\in C$, and the induction is complete.

Now observe that if $x_i\in A$, then $c_i=x_i^{-1}c_{i-1}x_i=c_{i-1}$ because $C$ is central in $A$. Thus only the $G$-syllables affect the conjugation chain. Let $i_1<\cdots<i_k$ be the indices with $x_{i_j}\in G$, and set
\[
g=x_{i_1}x_{i_2}\cdots x_{i_k}\in G,
\]
with $g=1$ if there are no such indices. Defining the corresponding $G$-only conjugation chain shows by induction on $i$ that the original $c_i$ coincide with the conjugates obtained by successively applying only the $G$-syllables. In particular,
\[
c_n=g^{-1}cg.
\]
Since $c_n=c'$, we obtain $g^{-1}cg=c'$ with $g\in G$.
\end{proof}

\begin{lemma}\label{lem:one-prime-step}
Let $p$ be a prime, let $G$ be a group, and let $a\in G$ be an element of infinite order. Set
\[
A_p=\Z\left[\tfrac1p\right]\times \Z
\]
and form the amalgamated free product
\[
H=G*_{\gen{a}}A_p
\]
via the identification $a^n\leftrightarrow (n,0)$. Then $a$ becomes $p$-divisible in $H$. Moreover, for every prime $q\neq p$ such that $a$ is $q$-indivisible in $G$, the element $a$ remains $q$-indivisible in $H$.
\end{lemma}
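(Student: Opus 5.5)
The first assertion is a direct computation. In $A_p=\Z[\tfrac1p]\times\Z$ the element $(\tfrac1p,0)$ satisfies $p\cdot(\tfrac1p,0)=(1,0)$. Under the identification $a^n\leftrightarrow(n,0)$ this says that $y=(\tfrac1p,0)\in A_p\le H$ satisfies $y^p=a$. Here I use that $\gen{a}\cong\Z$ embeds into both factors, so $G$ and $A_p$ are subgroups of $H$ by the normal form theorem.

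For the second assertion I argue by contradiction. Suppose $q\neq p$ is prime, $a$ is $q$-indivisible in $G$, and $y\in H$ satisfies $y^q=a$. The first step is the standard elliptic/hyperbolic dichotomy for amalgamated free products. Every element of $H$ is conjugate either into one of the factors $G$ or $A_p$, or to a cyclically reduced word of syllable length $\ge 2$. In the latter case, powers of that word are again cyclically reduced, with syllable length growing linearly, so they never lie in a factor. Hence no conjugate of $y^q$ lies in a factor, whereas $y^q=a\in C$. Therefore $y=h^{-1}zh$ with $z\in G\cup A_p$, and $z^q=hah^{-1}$.

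The second step transfers the conjugacy back to $G$. I will use a slight extension of Lemma~\ref{lem:central-conjugacy}: if $c\in C$ is conjugate in $H$ to an element $x$ lying in a factor, then in fact $x$ is $G$-conjugate to $c$. This covers both $x\in C$ and $x\in G$; it also shows that no element of $A_p\setminus C$ is $H$-conjugate to an element of $C$, since $C$ is central in $A_p$. I would prove this by rerunning the reduced-word argument of Lemma~\ref{lem:central-conjugacy} with a reduced representative of $h$. If some intermediate conjugate $c_i$ left $C$, the resulting word would be reduced of length $\ge 2$ and so could not lie in a single factor. Consequently each intermediate conjugate stays in $C$, the $A_p$-syllables act trivially, and only the $G$-syllables matter. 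This bookkeeping, in particular the final syllable where the target may lie in $G\setminus C$, is the step I expect to be the main technical obstacle.

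Now split into cases. If $z\in G$, the extension gives $g\in G$ with $g^{-1}z^qg=a$. Then $a=(g^{-1}zg)^q$ is $q$-divisible in $G$, a contradiction. If $z\in A_p$, the extension forces $z^q\in C$. Write $z=(r,m)$; then $z^q=(qr,qm)=(n,0)$, so $m=0$ and $r=n/q\in\Z[\tfrac1p]$. Since $q\neq p$, this forces $q\mid n$. Lemma~\ref{lem:central-conjugacy} applied to $c=a^n$ and $c'=a$ yields $g\in G$ with $g^{-1}a^ng=a$. Hence $a=(g^{-1}a^{n/q}g)^q$ is $q$-divisible in $G$, again a contradiction. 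So $a$ remains $q$-indivisible in $H$.
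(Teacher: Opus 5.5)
Your proof is correct. Its skeleton matches the paper's: show the root is elliptic, conjugate it into a factor, split on the factor, transfer the $H$-conjugacy back to $G$, and finish with arithmetic in $\Z[\tfrac1p]$. The middle step, however, is done by different means.

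\emph{The paper's route.} It works in the Bass--Serre tree. Since $x^q=a$ fixes the base edge, $x$ is elliptic. After $x$ is conjugated into a vertex group, $y^q$ fixes that base vertex and the $h$-translate of the other base vertex, hence the first edge of the geodesic between them. Edge stabilizers at $v_A$ all equal $\gen{a}$ by centrality, and those at $v_G$ are $G$-conjugates of $\gen{a}$. So $y^q$ lands, up to $G$-conjugacy, in $\gen{a}$, and only then is Lemma~\ref{lem:central-conjugacy} applied, to a pair of elements of $C$.

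\emph{Your route.} You avoid the tree and strengthen Lemma~\ref{lem:central-conjugacy} so that the target may be any element of a factor. The strengthening holds with exactly the proof you sketch. If some $c_m$ leaves $C$ before the last syllable, the result is a reduced word $v^{-1}dv$ of length $2r-1\ge 3$, which lies in no factor. The last syllable either fixes $c_{n-1}$ or $G$-conjugates it. Combine this with $G\cap A_p=C$; that intersection, not centrality alone, is why a target in $A_p$ must lie in $C$. The result replaces the geodesic and edge-stabilizer argument in one stroke: it gives $z^q\in C$ in the $A_p$-case and $z^q$ $G$-conjugate to $a$ in the $G$-case.

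\emph{What each buys.} Your version is self-contained and uniform, and the same structural argument serves Lemma~\ref{lem:one-prime-step-finite}. The paper's version makes the mechanism geometric and relies on standard tree facts instead of normal-form bookkeeping.

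\emph{Two small remarks.} First, in your opening step, the fact that powers of $w$ never lie in a factor does not by itself show that no conjugate of $y^q$ lies in a factor. You need the conjugacy theorem for amalgamated products: a cyclically reduced element of syllable length $\ge 2$ is never conjugate into a factor. This is standard but should be cited. Second, in the $A_p$-case your extension already yields $g\in G$ with $a^n=g^{-1}ag$, so the separate appeal to Lemma~\ref{lem:central-conjugacy} is redundant.
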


\begin{proof}
We first show that $a$ becomes $p$-divisible in $H$. In $A_p$ one has
\[
p\left(\tfrac1p,0\right)=(1,0),
\]
and $(1,0)$ is identified with $a$. Thus $\left(\tfrac1p,0\right)$ is a $p$th root of $a$ in $H$.

Now let $q\neq p$ be prime, and assume that $a$ is $q$-indivisible in $G$. We claim that $a$ is still $q$-indivisible in $H$. Suppose for a contradiction that there exists $x\in H$ with
\[
x^q=a.
\]
Let $T$ be the Bass--Serre tree associated with the splitting $H=G*_{\gen{a}}A_p$. We use the standard facts about the Bass--Serre action of $H$ on $T$: the action is without inversions, every element is either elliptic or hyperbolic, powers of a hyperbolic element are hyperbolic, and the fixed-point set of an element is a subtree; see \cite[Chapter~I, \S\S4--6]{SerreTrees}.

The vertices of $T$ are the left cosets $H/G$ and $H/A_p$, the edges are the left cosets $H/\gen{a}$, and an edge $h\gen{a}$ joins the vertices $hG$ and $hA_p$. Let $v_G=G$ and $v_A=A_p$ be the base vertices, and let $e=\gen{a}$ be the base edge joining them. The stabilizers are
\[
\mathrm{Stab}(h\cdot v_G)=hGh^{-1},\qquad \mathrm{Stab}(h\cdot v_A)=hA_ph^{-1},\qquad \mathrm{Stab}(h\cdot e)=h\gen{a}h^{-1}.
\]
In particular, $a\in\gen{a}=\mathrm{Stab}(e)$, so $a$ fixes $e$. Since $x^q=a$, the element $x^q$ fixes $e$, hence is elliptic. If $x$ were hyperbolic, then every nonzero power of $x$ would also be hyperbolic, impossible. So $x$ is elliptic and fixes some vertex $v$ of $T$.

Choose $h\in H$ such that $h\cdot v$ is one of the base vertices $v_G$ or $v_A$, and set
\[
y=hxh^{-1}.
\]
Then $y$ fixes $h\cdot v$, so either $y\in G$ or $y\in A_p$. Moreover,
\[
y^q=hx^qh^{-1}=hah^{-1}.
\]
Since $hah^{-1}\in h\gen{a}h^{-1}=\mathrm{Stab}(h\cdot e)$, the element $y^q$ fixes the edge $h\cdot e$, and hence both endpoints of that edge.

Assume first that $y\in A_p$. Then $y^q\in A_p$, so $y^q$ fixes the base vertex $v_A$. We have already observed that $y^q$ fixes the vertex $h\cdot v_G$ as well. The fixed-point set of $y^q$ is a subtree, so it contains the geodesic from $v_A$ to $h\cdot v_G$. Let $f$ be the first edge on that geodesic issuing from $v_A$. Then $y^q$ fixes $f$. Any edge incident to $v_A$ has the form $s\gen{a}$ with $s\in A_p$, and such an edge is $s\cdot e$. Because $A_p$ is abelian, $\gen{a}$ is central in $A_p$, so
\[
\mathrm{Stab}(f)=s\gen{a}s^{-1}=\gen{a}.
\]
Hence $y^q\in\gen{a}$, say $y^q=a^m$. Write $y=(r,n)\in A_p$ in additive notation. Then
\[
q(r,n)=(m,0),
\]
so $qn=0$ and $qr=m$. Since $\Z$ is torsion-free, $n=0$, and therefore $r=m/q$. But $r\in \Z[1/p]$ and $q\neq p$, so $q\mid m$, say $m=qk$. Thus $y=(k,0)=a^k\in\gen{a}$. We have
\[
hah^{-1}=y^q=a^{qk}.
\]
So $a$ and $a^{qk}$ are conjugate in $H$ and both lie in $\gen{a}$. By Lemma~\ref{lem:central-conjugacy}, there exists $g\in G$ such that
\[
g^{-1}ag=a^{qk}.
\]
Equivalently,
\[
(ga^kg^{-1})^q=a,
\]
contradicting the assumption that $a$ is $q$-indivisible in $G$.

Assume now that $y\in G$. Then $y^q\in G$, so $y^q$ fixes $v_G$. As before, $y^q$ also fixes $h\cdot v_A$, so the fixed-point set of $y^q$ contains the geodesic from $v_G$ to $h\cdot v_A$. Let $f$ be the first edge on that geodesic issuing from $v_G$. Then $y^q$ fixes $f$. Any edge incident to $v_G$ has the form $g_0\gen{a}$ with $g_0\in G$, so
\[
\mathrm{Stab}(f)=g_0\gen{a}g_0^{-1}
\]
for some $g_0\in G$. Hence $y^q\in g_0\gen{a}g_0^{-1}$. Setting $y_1=g_0^{-1}yg_0\in G$, we obtain
\[
y_1^q\in\gen{a},\qquad y_1^q=a^m
\]
for some $m\in\Z$. Moreover,
\[
a^m=y_1^q=g_0^{-1}y^qg_0=g_0^{-1}hah^{-1}g_0,
\]
so again $a$ and $a^m$ are conjugate in $H$ and both lie in $\gen{a}$. Applying Lemma~\ref{lem:central-conjugacy} once more, there exists $g\in G$ with
\[
g^{-1}ag=a^m.
\]
Equivalently,
\[
(gy_1g^{-1})^q=a,
\]
again contradicting the $q$-indivisibility of $a$ in $G$.

Both cases lead to contradictions. Therefore no $x\in H$ satisfies $x^q=a$, and $a$ remains $q$-indivisible in $H$ for every prime $q\neq p$ such that $a$ is $q$-indivisible in $G$.
\end{proof}

\begin{lemma}\label{lem:finite-order-divisibility}
Let $a$ be an element of finite order $m$ in a group $G$, and let $p$ be a prime. If $p\nmid m$, then $a$ is $p$-divisible already in the cyclic subgroup $\gen{a}$. In particular, if $a$ is $p$-indivisible in $G$, then either $a$ has infinite order or $p\mid \ord(a)$.
\end{lemma}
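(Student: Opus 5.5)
The plan is to work entirely inside the finite cyclic group $\gen{a}$, which is isomorphic to $\Z/m\Z$, and use the fact that the $p$th power map is an automorphism there when $\gcd(p,m)=1$. Concretely, since $p$ is prime and $p\nmid m$, we have $\gcd(p,m)=1$. Bézout then gives integers $u,v$ with $up+vm=1$. I would set $y=a^u\in\gen{a}$ and compute
\[
y^p=a^{up}=a^{1-vm}=a\cdot (a^m)^{-v}=a,
\]
so $y$ is a $p$th root of $a$ lying in $\gen{a}$. Hence $a$ is $p$-divisible in $\gen{a}$, and a fortiori in $G$. This also covers the degenerate case $m=1$, where $a=e=e^p$.

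For the ``in particular'' clause I would argue by contraposition. Suppose $a$ has finite order $m=\ord(a)$ and $p\nmid m$. The first part then shows that $a$ is $p$-divisible in $G$, since a root in $\gen{a}$ is a root in $G$. Therefore, if $a$ is $p$-indivisible in $G$, either $a$ has infinite order or $p\mid\ord(a)$.

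No step here is a genuine obstacle. The only point needing care is that the exponent $u$ may be negative. This is harmless because $a^u$ is still an element of $\gen{a}$, and the computation $a^{up}=a$ holds for all integer exponents.
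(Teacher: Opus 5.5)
Your proof is correct and matches the paper's approach: the paper notes that the $p$th-power map is an automorphism of $\gen{a}\cong C_m$ when $\gcd(p,m)=1$, and your Bézout witness $y=a^u$ with $up+vm=1$ just makes that surjectivity explicit. The contrapositive argument for the ``in particular'' clause is exactly what is intended.
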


\begin{proof}
The $p$th-power map is an automorphism of the finite cyclic group $\gen{a}\cong C_m$ whenever $\gcd(p,m)=1$. Hence, if $p\nmid m$, the element $a$ already has a $p$th root in $\gen{a}$.
\end{proof}

\begin{lemma}\label{lem:one-prime-step-finite}
Let $p$ be a prime, let $G$ be a group, and let $a\in G$ be an element of finite order $m$ with $p\mid m$. Set
\[
A_{p,m}=C_{pm}=\gen{t\mid t^{pm}=1}
\]
and form the amalgamated free product
\[
H=G*_{\gen{a}}A_{p,m}
\]
via the identification $a=t^p$. Then $a$ becomes $p$-divisible in $H$. Moreover, for every prime $q\neq p$ such that $a$ is $q$-indivisible in $G$, the element $a$ remains $q$-indivisible in $H$.
\end{lemma}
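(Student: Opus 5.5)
The plan is to repeat the proof of Lemma~\ref{lem:one-prime-step} almost word for word. The only change is that the abelian factor $\Z[\tfrac1p]\times\Z$ is replaced by the finite cyclic group $A_{p,m}=C_{pm}$, so the one arithmetic step inside the abelian factor has to be redone.

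First I check that the amalgam is legitimate. In $C_{pm}$ the element $t^p$ has order exactly $m$, so $a^n\mapsto t^{pn}$ is an isomorphism from $\gen{a}\cong C_m$ onto $\gen{t^p}$. Both injectivity hypotheses hold, so the normal form theory and the Bass--Serre tree are available. Also, $C_{pm}$ is abelian, so $\gen{a}$ is central in $A_{p,m}$, and Lemma~\ref{lem:central-conjugacy} applies. Divisibility is then immediate: $t^p=a$ in $H$, so $t$ is a $p$th root of $a$.

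For the persistence of $q$-indivisibility with $q\neq p$, suppose $x^q=a$ in $H$. I use the Bass--Serre tree $T$ of the splitting $H=G*_{\gen{a}}A_{p,m}$, exactly as before. Since $a$ fixes the base edge $e$, the element $x^q$ is elliptic. Powers of a hyperbolic element are hyperbolic, so $x$ is elliptic. I then conjugate by some $h$ to get $y=hxh^{-1}$ lying in $G$ or in $A_{p,m}$, with $y^q=hah^{-1}$ fixing the edge $h\cdot e$.

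The case $y\in G$ is verbatim the second case of Lemma~\ref{lem:one-prime-step}. There $y^q$ fixes the first edge $g_0\gen{a}$ on the geodesic from $v_G$, so $y_1=g_0^{-1}yg_0\in G$ satisfies $y_1^q=a^k$. Lemma~\ref{lem:central-conjugacy} gives $g\in G$ with $g^{-1}ag=a^k$, and therefore $(gy_1g^{-1})^q=a$, which is a contradiction.

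In the case $y\in A_{p,m}$, the first edge at $v_A$ has stabilizer $s\gen{a}s^{-1}=\gen{a}$ by centrality. Hence $y^q\in\gen{a}=\gen{t^p}$. This is the one place where the argument differs from the infinite case, and I expect it to be the only real point to verify. Write $y=t^j$, so that $qj\equiv pk'\pmod{pm}$ for some $k'$. Since $p\mid pm$, this gives $p\mid qj$. Because $p\neq q$ are primes, $p\mid j$, so $y=t^{pj'}=a^{j'}\in\gen{a}$. Then $hah^{-1}=a^{qj'}$, and Lemma~\ref{lem:central-conjugacy} provides $g\in G$ with $g^{-1}ag=a^{qj'}$. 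It follows that $(ga^{j'}g^{-1})^q=a$, contradicting the $q$-indivisibility of $a$ in $G$. Both cases are therefore impossible, which completes the argument.
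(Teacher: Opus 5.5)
Your proposal is correct and follows the paper's proof essentially step for step. The shared steps are: ellipticity of $x$ in the Bass--Serre tree, conjugation of $x$ into a vertex group, the $G$-case copied from Lemma~\ref{lem:one-prime-step}, and, in the $A_{p,m}$-case, the reduction mod $p$ showing $y\in\gen{t^p}=\gen{a}$ followed by Lemma~\ref{lem:central-conjugacy}. Your explicit check that $t^p$ has order exactly $m$ in $C_{pm}$, so that the amalgamation is along an isomorphism, is a small detail the paper leaves implicit.
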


\begin{proof}
The element $t\in A_{p,m}\leq H$ is a $p$th root of $a$, so $a$ becomes $p$-divisible in $H$.

Now let $q\neq p$ be a prime, and assume that $a$ is $q$-indivisible in $G$. Suppose towards a contradiction that there exists $x\in H$ with
\[
x^q=a.
\]
Let $T$ be the Bass--Serre tree associated with the splitting $H=G*_{\gen{a}}A_{p,m}$. As in the proof of Lemma~\ref{lem:one-prime-step}, the element $x$ must be elliptic, because $x^q=a$ fixes the base edge. Choose $h\in H$ so that $y=hxh^{-1}$ fixes one of the base vertices. Then $y\in G$ or $y\in A_{p,m}$, and
\[
y^q=hah^{-1}.
\]
Moreover, $y^q$ fixes the edge $h\cdot e$.

Assume first that $y\in A_{p,m}$. Since $A_{p,m}$ is abelian and $\gen{a}$ is central in it, the same subtree argument as in Lemma~\ref{lem:one-prime-step} shows that $y^q\in \gen{a}$. Write $A_{p,m}=\gen{t}$ and $y=t^r$. Since $y^q\in \gen{t^p}$, there is some integer $k$ with
\[
t^{rq}=t^{pk}.
\]
Thus $rq\equiv pk\pmod{pm}$. Reducing modulo $p$ gives $rq\equiv 0\pmod p$, and since $q\neq p$ we conclude that $p\mid r$. Hence $y\in \gen{t^p}=\gen{a}\subseteq G$. Because $y^q=hah^{-1}$ and both $a$ and $y^q$ lie in $\gen{a}$, Lemma~\ref{lem:central-conjugacy} yields some $g\in G$ with
\[
g^{-1}ag=y^q.
\]
Conjugating by $g$ gives
\[
(gyg^{-1})^q=a,
\]
contradicting the $q$-indivisibility of $a$ in $G$.

Assume next that $y\in G$. Exactly as in the second half of the proof of Lemma~\ref{lem:one-prime-step}, the fact that $y^q$ fixes the first edge on the geodesic from the base $G$-vertex to $h\cdot v_A$ yields an element $g_0\in G$ such that, after setting $y_1=g_0^{-1}yg_0\in G$,
\[
y_1^q\in \gen{a}.
\]
Say $y_1^q=a^n$. Since also
\[
a^n=g_0^{-1}hah^{-1}g_0,
\]
the elements $a$ and $a^n$ are conjugate in $H$ and both lie in $\gen{a}$. Lemma~\ref{lem:central-conjugacy} therefore gives $g\in G$ with
\[
g^{-1}ag=a^n=y_1^q.
\]
Hence
\[
(gy_1g^{-1})^q=a,
\]
again contradicting the $q$-indivisibility of $a$ in $G$.

Both cases are impossible. Therefore $a$ remains $q$-indivisible in $H$ for every prime $q\neq p$ such that $a$ is $q$-indivisible in $G$.
\end{proof}

\begin{theorem}\label{thm:hom-exact-s42}
Let $G$ be a uniformly prime-indivisible group, and let $\mathcal L$ be any substitution language of group-modal formulas with
\[
\Lgrp\subseteq \mathcal L\subseteq \LgrpM.
\]
Then
\[
\Val_{\GrpHom}(G,\mathcal L_G)=\SFourTwo.
\]
\end{theorem}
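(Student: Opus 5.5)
The lower bound $\SFourTwo\subseteq\Val_{\GrpHom}(G,\mathcal L_G)$ is immediate from Theorem~\ref{thm:hom-s42-lower} with $A=G$. For the upper bound I will apply Proposition~\ref{prop:general-tools}(5). So the task is to produce, for every $n$, an independent family of $n$ buttons and arbitrarily long dials independent of them. All control statements must be first-order $\Lgrp$-formulas with parameters from $G$, hence lie in $\mathcal L_G$.

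\emph{Buttons.} Fix a finite set $S$ of primes. By uniform prime-indivisibility choose $a\in G$ of infinite order that is $p$-indivisible for all $p\in S$. For $p\in S$ let $b_p$ be the formula $\exists y\,(y^p=a)$ with parameter $a$. Each $b_p$ is a pure button, since a $p$th root is carried along by any homomorphism, and it is unpushed at $G$.

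The key claim is independence. Let $h\colon G\to H$ be a homomorphism and $T\subseteq S$ a set of primes with $h(a)$ $p$-indivisible in $H$ for all $p\in T$. I push the buttons in $T$ one at a time; suppose $p\in T$ is next. By Lemma~\ref{lem:finite-order-divisibility}, either $h(a)$ has infinite order or $p\mid\ord(h(a))$.
\begin{enumerate}[label=(\roman*)]
\item If $h(a)$ has infinite order, pass to $H*_{\gen{h(a)}}A_p$ as in Lemma~\ref{lem:one-prime-step}.
\item Otherwise $p\mid\ord(h(a))$, and I pass to $H*_{\gen{h(a)}}A_{p,m}$ as in Lemma~\ref{lem:one-prime-step-finite}.
\end{enumerate}
In both cases the canonical map from $H$ is an embedding, by the normal form for amalgams with injective edge maps. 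The lemmas say that $p$ becomes pushed while every other still-indivisible prime $q\neq p$ stays unpushed. Buttons already pushed stay pushed by purity. Iterating over $T$ pushes exactly the chosen subfamily.

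\emph{Dials.} Fix $N$, and let $d_k$ assert $|Z(K)|=k$ for $1\le k<N$, and $d_{\ge N}$ assert $|Z(K)|\ge N$. From any $H$ (after any homomorphism), map $H\hookrightarrow (H*\Z)\times C_k$, with $k$ replaced by $N$ for the last value.
\begin{itemize}
\item When $H\neq 1$, the free product $H*\Z$ is centerless, so the center is exactly $C_k$. When $H=1$, the group $\Z\times C_k$ has infinite center, so some care is needed here. I would instead use $(H*\Z*\Z)\times C_k$, which is centerless in the first factor for every $H$, or handle $H=1$ separately; in that case all buttons are already pushed, since $h(a)=e$.
\item These embeddings do not push new buttons. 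If $(x,c)^q=(h(a),0)$ then $x^q=h(a)$ in the free product. A $q$th root of a nontrivial element of a free factor must be elliptic, hence conjugate into a factor. The normal form theorem then forces it into $H$ itself, contradicting indivisibility in $H$.
\end{itemize}
Pushed buttons remain pushed, so the dial is independent of the buttons.

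The main obstacle I expect is the bookkeeping in the independence claim, namely after an arbitrary, possibly collapsing homomorphism. There $h(a)$ may acquire finite order, which is exactly why both Lemma~\ref{lem:one-prime-step} and Lemma~\ref{lem:one-prime-step-finite} are needed. The root-in-free-product verification for the dial should be routine by the same Bass--Serre argument.
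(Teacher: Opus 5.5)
Your proposal is correct and follows essentially the paper's route. You use divisibility buttons $\exists y\,(y^p=a)$ on an infinite-order, $S$-indivisible parameter, pushed one prime at a time via Lemmas~\ref{lem:one-prime-step} and~\ref{lem:one-prime-step-finite} (with order preserved by the injective canonical maps), and center-size dials realized by $(H*F_2)\times C_k$, which is exactly your corrected $(H*\Z*\Z)\times C_k$. The one real difference is how you show that dial moves push no new buttons: the paper applies the retraction $K*F_2\to K$ killing $F_2$ to a putative $p$th root, which is quicker than your elliptic-element/normal-form argument, though yours is also valid.
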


\begin{proof}
The lower bound is Theorem~\ref{thm:hom-s42-lower}. The embedding proof uses prime-torsion buttons, but those are not stable under arbitrary homomorphisms. Here I replace them by divisibility buttons on a distinguished parameter. For the upper bound, fix integers $N\geq 2$ and $M\geq 1$. We shall produce an independent family of $M$ buttons together with a dial of length $N$.

For the buttons, choose a finite set $S$ of primes with $|S|=M$. By uniform prime-indivisibility, there is an element $a\in G$ of infinite order such that $a$ is $p$-indivisible in $G$ for every $p\in S$. For each $p\in S$, let
\[
b_p(x):=\exists y\,(y^p=x).
\]
Evaluated at the parameter $a$, the assertion $b_p(a)$ says that the image of $a$ is $p$-divisible. Because $a$ is $p$-indivisible in $G$ for every $p\in S$, these buttons are all initially unpushed at $G$.

We first show that each $b_p(a)$ is a pure button. Let $h\colon G\to H$ be an arbitrary homomorphism out of $G$, write $a_H=h(a)$, and suppose that $a_H$ is not $p$-divisible in $H$.

If $a_H$ has infinite order, apply Lemma~\ref{lem:one-prime-step} with $H$ in place of $G$ and $a_H$ in place of $a$. If instead $a_H$ has finite order $m_H$, then Lemma~\ref{lem:finite-order-divisibility} shows that $p\mid m_H$, since otherwise $a_H$ would already be $p$-divisible in $\gen{a_H}$. In that case apply Lemma~\ref{lem:one-prime-step-finite}. In either situation we obtain a homomorphism from $H$ to a further group in which the image of $a_H$ has acquired a $p$th root. Since $p$-divisibility is preserved by homomorphisms, once pushed the button stays pushed. Thus each $b_p(a)$ is a pure button.

We shall use a simple structural point about these one-prime-step constructions. In both Lemma~\ref{lem:one-prime-step} and Lemma~\ref{lem:one-prime-step-finite}, the canonical map from the source group into the amalgamated free product is injective. Therefore the order of the distinguished element is preserved at each stage. Consequently, once the current image of $a$ has infinite order, every later image still has infinite order; and if at some stage the current image has finite order $m$, then every later image still has order $m$.

Next we verify independence of the family $\{b_p(a):p\in S\}$. Let $h\colon G\to H$ be an arbitrary homomorphism out of $G$, write $a_H=h(a)$, and define
\[
A=\{p\in S:H\models b_p[a_H]\}.
\]
Thus $A$ is the set of already-pushed button primes at $H$, and $S\setminus A$ is the set of currently unpushed ones. Fix any subset $T\subseteq S\setminus A$. Enumerate
\[
T=\{p_1,\dots,p_r\}.
\]
We recursively construct groups $H_0,\dots,H_r$, elements $a_i\in H_i$, and homomorphisms
\[
H=H_0\xrightarrow{\psi_0}H_1\xrightarrow{\psi_1}\cdots\xrightarrow{\psi_{r-1}}H_r
\]
so that for each $i\le r$:
\begin{enumerate}[label=(\roman*)]
    \item $a_i$ is the image of $a_H$ in $H_i$;
    \item $H_i\models b_p[a_i]$ for every $p\in A\cup\{p_1,\dots,p_i\}$; and
    \item $H_i\models \neg b_q[a_i]$ for every $q\in S\setminus\bigl(A\cup\{p_1,\dots,p_i\}\bigr)$.
\end{enumerate}
Set $H_0=H$ and $a_0=a_H$. Suppose $H_i$ and $a_i$ are given, and let $p=p_{i+1}$. By inductive clause~(iii), every prime $q$ not yet chosen is such that $a_i$ is $q$-indivisible in $H_i$, so the relevant one-prime-step lemma is applicable.

If $a_i$ has infinite order, let
\[
H_{i+1}=H_i*_{\gen{a_i}}A_p,
\]
and let $\psi_i\colon H_i\to H_{i+1}$ be the canonical homomorphism. Then Lemma~\ref{lem:one-prime-step} shows that $a_{i+1}=\psi_i(a_i)$ is $p$-divisible in $H_{i+1}$ and remains $q$-indivisible for every prime $q\neq p$ for which $a_i$ was $q$-indivisible in $H_i$.

If instead $a_i$ has finite order $m_i$, then $H_i\models \neg b_p[a_i]$, so Lemma~\ref{lem:finite-order-divisibility} gives $p\mid m_i$. Let
\[
H_{i+1}=H_i*_{\gen{a_i}}C_{pm_i},
\]
where $a_i$ is identified with $t^p$ for a generator $t$ of $C_{pm_i}$, and let $\psi_i\colon H_i\to H_{i+1}$ be the canonical homomorphism. By Lemma~\ref{lem:one-prime-step-finite}, the image $a_{i+1}=\psi_i(a_i)$ is $p$-divisible in $H_{i+1}$ and remains $q$-indivisible for every prime $q\neq p$ for which $a_i$ was $q$-indivisible in $H_i$.

Since divisibility is preserved by homomorphisms, primes already in $A\cup\{p_1,\dots,p_i\}$ stay pushed at stage $i+1$. The preceding paragraph on order-preservation shows that we remain in the same order regime (infinite or finite of fixed order) throughout the recursion. Hence the three inductive requirements continue to hold. At the end we obtain a group $H_r$ satisfying
\[
H_r\models b_p[a_r]\text{ for }p\in A\cup T,
\qquad
H_r\models \neg b_q[a_r]\text{ for }q\in S\setminus(A\cup T).
\]
Thus from the arbitrary homomorphism $h\colon G\to H$ we have pushed exactly the chosen subfamily $T$ of the currently unpushed buttons, while leaving the others unchanged. This proves that the buttons are genuinely independent.

For dials, define $d_1$ to say that the center is trivial. For $1<n<N$, let $d_n$ assert that the center has size exactly $n$, and let $d_{\ge N}$ assert that the center has size at least $N$. Exactly one of
\[
d_1,\dots,d_{N-1},d_{\ge N}
\]
holds in any group. From any group $K$, these dial values are realized by the homomorphisms
\[
K\hookrightarrow K*F_2,
\qquad
K\hookrightarrow (K*F_2)\times C_n\ (1<n<N),
\qquad
K\hookrightarrow (K*F_2)\times C_N.
\]
The first map realizes $d_1$ because the center of a nontrivial free product is trivial; if $K$ is trivial, then $K*F_2\cong F_2$, whose center is also trivial. The other maps realize the remaining dial values because the center of $K*F_2$ is trivial and the center of a direct product is the product of the centers.

It remains to show that these dial moves preserve every button $b_p(a_K)$. Let $k\colon G\to K$ be a homomorphism, and write $a_K=k(a)$. There is a canonical retraction
\[
\rho\colon K*F_2\to K
\]
which is the identity on the factor $K$ and sends the free factor $F_2$ to the identity. Hence, if $u^p=a_K$ in $K*F_2$, then applying $\rho$ gives
\[
\rho(u)^p=\rho(a_K)=a_K
\]
in $K$. Conversely, any $p$th root of $a_K$ already present in $K$ remains a $p$th root in $K*F_2$ under the canonical inclusion. Thus the map
\[
K\hookrightarrow K*F_2
\]
preserves the truth of every button $b_p(a_K)$.

For the direct-product dial moves, suppose
\[
(u,c)^p=(a_K,1)
\]
in $(K*F_2)\times C_n$. Projecting to the first factor yields $u^p=a_K$ in $K*F_2$, and applying the retraction $\rho$ again gives a $p$th root of $a_K$ in $K$. Conversely, if $x^p=a_K$ in $K$, then $(x,1)^p=(a_K,1)$ in $(K*F_2)\times C_n$. Therefore each map
\[
K\hookrightarrow (K*F_2)\times C_n
\]
preserves the truth of every button $b_p(a_K)$.

Finally, let $h\colon G\to H$ be an arbitrary homomorphism, let $T$ be any subset of the currently unpushed buttons at the transported parameter $h(a)$, and let the target dial value be one of $d_1,\dots,d_{N-1},d_{\ge N}$. First push exactly the buttons in $T$ by the sequential construction above, obtaining a homomorphic continuation $G\xrightarrow{h}H\to K$. Then perform the appropriate dial move from $K$. The previous paragraph shows that this second step preserves the entire button pattern. Therefore, from the arbitrary homomorphism $h\colon G\to H$, any chosen subfamily of the currently unpushed buttons can be pushed and, independently, the dial can be set to any desired value without disturbing the button status. This is exactly the independence requirement from Definition~\ref{def:buttons-dials-hom}.

Since $M$ and $N$ were arbitrary, we have produced arbitrarily large finite independent families of genuinely independent buttons together with arbitrarily long finite dials independent of those buttons. Proposition~\ref{prop:general-tools}(5), applied with parameter set $A=G$, therefore yields
\[
\Val_{\GrpHom}(G,\mathcal L_G)\subseteq \SFourTwo.
\]
Together with the lower bound, this proves the theorem.
\end{proof}

\begin{corollary}\label{cor:hom-category-s42}
The exact propositional modal theory of the category of groups under homomorphisms, with formulaic substitutions from $\mathcal L$ allowing parameters, is $\SFourTwo$. More precisely, for every substitution language $\mathcal L$ of group-modal formulas with $\Lgrp\subseteq \mathcal L\subseteq \LgrpM$, the intersection of the theories $\Val_{\GrpHom}(G,\mathcal L_G)$ over all groups $G$ is $\SFourTwo$.
\end{corollary}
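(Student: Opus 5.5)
The corollary follows by combining the uniform lower bound with a single witness group attaining the upper bound.

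First, the lower bound. By Theorem~\ref{thm:hom-s42-lower}, every group $G$ satisfies $\SFourTwo\subseteq\Val_{\GrpHom}(G,\mathcal L_G)$, taking the parameter set $A=G$. Hence $\SFourTwo$ is contained in the intersection
\[
\bigcap_{G}\Val_{\GrpHom}(G,\mathcal L_G).
\]

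Second, the upper bound. The intersection is contained in $\Val_{\GrpHom}(G_0,\mathcal L_{G_0})$ for any single group $G_0$. So it suffices to find one uniformly prime-indivisible group and then apply Theorem~\ref{thm:hom-exact-s42}. I would take $G_0=\Z$ with generator $a=1$. The element $a$ has infinite order. For every prime $p$, the element $1$ is not of the form $p y$ in $\Z$. So for every finite set $S$ of primes, the single element $a$ witnesses the definition of uniform prime-indivisibility. (The free group $F_2$ would also work, since a generator is not a proper power, but $\Z$ is simpler.)

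Theorem~\ref{thm:hom-exact-s42} then gives $\Val_{\GrpHom}(\Z,\mathcal L_{\Z})=\SFourTwo$. Therefore the intersection is contained in $\SFourTwo$, and together with the lower bound it equals $\SFourTwo$.

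There is no real obstacle here. The only point to check is that the substitution language $\mathcal L$ is held fixed across all groups, while the parameters are drawn from each group separately, so that the two theorems are applied with matching hypotheses. Both theorems are stated for every $\mathcal L$ with $\Lgrp\subseteq\mathcal L\subseteq\LgrpM$, so they apply uniformly.
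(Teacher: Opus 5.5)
Your proposal is correct and is essentially the paper's proof. The paper also takes the lower bound from Theorem~\ref{thm:hom-s42-lower} and gets the upper bound by applying Theorem~\ref{thm:hom-exact-s42} to $\Z$, which is uniformly prime-indivisible via the element $1$.
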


\begin{proof}
The lower bound is Theorem~\ref{thm:hom-s42-lower}. For the reverse inclusion, observe that $\Z$ is uniformly prime-indivisible: for every finite set of primes $S$, the element $1\in \Z$ has infinite order and is $p$-indivisible for every $p\in S$. Hence Theorem~\ref{thm:hom-exact-s42} applies to $\Z$, yielding exact validities $\SFourTwo$ there.
\end{proof}

\section{Comparison with the embedding semantics}\label{section:comparison}

A companion paper studies modal group theory in the category of groups and embeddings \cite{WoloszynEmbedding}. The two modal systems share the same ambient language and much of the same expressive-power technology, but the change of accessibility relation has substantial consequences.

First, several expressive-power arguments transfer from embeddings to homomorphisms because the same witnessing target groups can still be used. In particular, the proof of cyclic-subgroup membership under homomorphisms does \emph{not} rely on a general claim that arbitrary homomorphisms preserve the relevant centralizer test. Rather, one applies necessity to the canonical embedding into the relevant HNN extension, and that embedding is itself a homomorphism. This is why the arithmetic and complexity results in Sections~\ref{section:expressive} and~\ref{section:complexity} closely parallel their embedding counterparts.

Second, homomorphisms collapse information in a way that embeddings do not. In particular, every group admits a trivial homomorphism into every group. This immediately forces sentential validities up to $\SFive$, in sharp contrast with the embedding semantics, where sentential validities need not be $\SFive$ and parameter-validities at many groups are exactly $\SFourTwo$ rather than $\SFive$ \cite{WoloszynEmbedding}.

Third, the theorem expressing equality of order under embeddings has no homomorphism analogue of the same form. The conjugacy formula used there becomes trivial under passage to the trivial group. Thus the homomorphism system preserves many of the subgroup-theoretic definability results of the embedding system, but not all of them. The difference is conceptually the same as the one already visible in the category of sets, where allowing arbitrary functions instead of injections drastically changes the modal validities \cite{WSet}.

\printbibliography[heading=mgtbibliography]

\end{document}